\newcommand\nc\newcommand
\renewcommand
\nc\Sub{\operatorname{\sf Sub}}
\nc\NSub{\operatorname{\sf NSub}}
\nc\Ht{\operatorname{\sf Ht}}
\nc\bH{{\bf H}}
\nc\G{\mathscr G}
\nc\rG{\mathrel{\G}}
\nc\leqG{\operatorname{\leq_\G}}
\nc{\Mod}[1]{\ (\mathrm{mod}\ #1)}
\nc\trans[1]{\left(\begin{smallmatrix}#1\end{smallmatrix}\right)}
\newcommand{\Eq}{\operatorname{\sf Eq}}
\newcommand{\Cong}{\operatorname{\sf Cong}}
\newcommand{\RCong}{\operatorname{\sf RCong}}
\newcommand{\LCong}{\operatorname{\sf LCong}}
\nc\JEnew[1]{{\color{black} #1}}
\nc\JErev[1]{{\color{black} #1}}
\newcommand{\NRrev}[1]{{\color{black} #1}}
\nc\AND{\qquad\text{and}\qquad}
\nc\ANDSIM{\qquad\text{and similarly}\qquad}
\nc\ANd{\quad\text{and}\quad}
\nc\COMMA{,\qquad}
\nc\COMMa{,\quad}
\nc\WHERE{\qquad\text{where}\qquad}
\rnc\iff{\ \Leftrightarrow\ }
\nc\IFf{\quad \Leftrightarrow\quad }
\nc\Iff{\ \ \Leftrightarrow\ \ }
\nc\IFF{\qquad \Leftrightarrow\qquad }
\rnc\implies{\ \Rightarrow\ }
\nc\IMPLIES{\qquad \Rightarrow\qquad }
\nc\set[2]{\{#1:#2\}}
\DeclareMathOperator{\ar}{ar}
\nc\bit{\begin{itemize}[label=\textbullet, leftmargin=5mm]}
\nc\eit{\end{itemize}}
\nc\ben{\begin{thmenumerate}}
\nc\bena{\begin{enumerate}[label=\textup{(\alph*)},leftmargin=10mm]}
\nc\een{\end{thmenumerate}}
\nc\eena{\end{enumerate}}
\nc\pf{\begin{proof}}
\nc\epf{\end{proof}}
\nc\epfres{\hfill\qed}
\nc\epfreseq{\tag*{\qed}}
\let\oldproofname=\proofname
\renewcommand{\proofname}{\rm\bf{\oldproofname}}
\nc{\pfitem}[1]{\medskip \noindent #1.}
\nc{\firstpfitem}[1]{#1.}
\renewcommand{\H}{\mathscr H}
\renewcommand{\L}{\mathscr L}
\newcommand{\R}{\mathscr R}
\newcommand{\D}{\mathscr D}
\newcommand{\J}{\mathscr J}
\newcommand{\K}{\mathscr K}
\nc\rH{\mathrel{\H}}
\nc\rL{\mathrel{\L}}
\nc\rR{\mathrel{\R}}
\nc\rD{\mathrel{\D}}
\nc\rJ{\mathrel{\J}}
\nc\rK{\mathrel{\K}}
\nc\rsi{\mathrel{\si}}
\nc\leqL{\leq_\L}
\nc\leqR{\leq_\R}
\nc\leqJ{\leq_\J}
\nc\leqH{\leq_\H}
\renewcommand{\P}{\mathcal P} 
\rnc\S{\mathcal S}
\nc\A{\mathcal A}
\nc\B{\mathcal B}
\nc\M{\mathcal M}
\nc\TL{\mathcal T\!\mathcal L}
\nc\F{\mathcal F}
\nc\I{\mathcal I}
\rnc\O{\mathcal O}
\newcommand{\T}{\mathcal T}
\newcommand{\PT}{\mathcal P\mathcal T} 
\newcommand{\PB}{\mathcal P\mathcal B} 
\renewcommand{\I}{\mathcal I}  
\newcommand{\GL}{\textup{GL}}
\newcommand{\FF}{\mathbb{F}}
\newcommand{\rank}{\operatorname{rank}}
\newcommand{\im}{\operatorname{im}}
\newcommand{\restr}{\mathord{\restriction}}
\newcommand{\smt}{\!\setminus\!}
\nc\pc[2]{(#1,#2)^\sharp}
\numberwithin{equation}{section}
\newtheorem{thm}[equation]{Theorem}
\newtheorem{lemma}[equation]{Lemma}
\newtheorem{cor}[equation]{Corollary}
\newtheorem{prop}[equation]{Proposition}
\theoremstyle{definition}
\newtheorem{rem}[equation]{Remark}
\newtheorem{defn}[equation]{Definition}
\newenvironment{thmenumerate}{
   \begin{enumerate}[label=\textup{(\roman*)}, widest=(5), leftmargin=10mm]}{
    \end{enumerate}}
\begin{document}

\title{Heights of one- and two-sided congruence lattices of semigroups\footnote{Supported by the Engineering and Physical Sciences Research Council [EP/S020616/1, EP/V002953/1 and EP/V003224/1] and the Australian Research Council [FT190100632].}~\vspace{-4ex}}

\date{}
\author{}

\maketitle
\begin{center}
{\large 
Matthew Brookes,%
\hspace{-.25em}\footnote{\label{fn:SC}Mathematical Institute, School of Mathematics and Statistics, University of St Andrews, St Andrews, Fife KY16 9SS, UK. {\it Emails:} {\tt mdgkb1@st-andrews.ac.uk}, {\tt jdm3@st-andrews.ac.uk}, {\tt nik.ruskuc@st-andrews.ac.uk}}
James East,%
\hspace{-.25em}\footnote{Centre for Research in Mathematics and Data Science, Western Sydney University, Locked Bag 1797, Penrith NSW 2751, Australia. {\it Email:} {\tt j.east@westernsydney.edu.au}}
Craig Miller,%
\hspace{-.25em}\footnote{Department of Mathematics, University of York, York YO10 5DD, UK. {\it Email:} {\tt craig.miller@york.ac.uk}}
James D.~Mitchell,\hspace{-.25em}\textsuperscript{\ref{fn:SC}}
Nik Ru\v{s}kuc\textsuperscript{\ref{fn:SC}}
}
\end{center}

\begin{abstract}
\noindent
The \emph{height} of a poset $P$ is the supremum of the cardinalities of chains in $P$.
The exact formula for the height of the subgroup lattice of the symmetric group $\S_n$ is known, as is an accurate asymptotic formula for the height of the subsemigroup lattice of the full transformation monoid $\T_n$.
Motivated by the related question of determining the heights of the lattices of left and right congruences of $\T_n$, 
\NRrev{and deploying the framework of unary algebras and semigroup actions,}
we develop a general method for computing the heights of lattices of both one- and two-sided congruences for semigroups.
We apply this theory to obtain exact height formulae for several monoids of transformations, matrices and partitions, including: 
the full transformation monoid $\T_n$, the partial transformation monoid~$\PT_n$, the symmetric inverse monoid $\I_n$, the monoid of order-preserving transformations $\O_n$, the full matrix monoid $\M(n,q)$, the partition monoid $\P_n$, the Brauer monoid $\B_n$ and the Temperley--Lieb monoid $\TL_n$.
\medskip

\noindent
\emph{Keywords}: Semigroup, semigroup action, unary algebra, (left/right) congruence, congruence lattice, height, modular element,
 transformation and diagram monoids, Sch\"{u}tzenberger group.
\medskip

\noindent
MSC (2020): 20M10, 20M30, 20M20, 08A30, 08A60, 06B05.

\end{abstract}

\setcounter{tocdepth}{1}
\tableofcontents

\section{Introduction}

The \emph{height} of a poset $P$, denoted $\Ht(P)$, is the supremum of the cardinalities of chains in $P$.
In~\cite{CST1989} the height of the subgroup lattice $\Sub(\S_n)$ of the symmetric group $\S_n$ is computed:

\begin{thm}[Cameron, Solomon and Turull {\cite[Theorem 1]{CST1989}}]
\label{thm:HtSn}
For $n\geq1$, we have 
\[
\Ht(\Sub(\S_n)) = \lceil3n/2\rceil - b(n),
\]
where $b(n)$ is the number of $1$s in the binary expansion of $n$.
\end{thm}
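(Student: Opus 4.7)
The plan is to establish matching upper and lower bounds on $\Ht(\Sub(\S_n))$.

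For the lower bound, I would construct an explicit chain of length $\lceil 3n/2 \rceil - b(n)$. Write $n = 2^{a_1} + 2^{a_2} + \cdots + 2^{a_k}$ with $a_1 > a_2 > \cdots > a_k \geq 0$, so $k = b(n)$, and partition $\{1,\ldots,n\}$ into blocks of these sizes. I would build the chain in two stages: first from the trivial group up to the Young subgroup $\S_{2^{a_1}} \times \cdots \times \S_{2^{a_k}}$ by inserting a long chain inside each factor independently; then from there up to $\S_n$ by successive coarsenings of the partition, contributing a further $k-1$ steps. Inside each $\S_{2^a}$, use that the Sylow $2$-subgroup $P$ is the iterated wreath product $\S_2 \wr \cdots \wr \S_2$ of composition length $2^a - 1$, and extend above $P$ through primitive intermediates up to $\S_{2^a}$ to obtain a chain of length $3 \cdot 2^{a-1}$ for $a \geq 1$. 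Summing $\sum_i 3 \cdot 2^{a_i - 1} + (k-1)$ and rearranging yields exactly $\lceil 3n/2 \rceil - b(n)$.

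For the upper bound, I would induct on $n$. Given a maximal chain $1 = H_0 < H_1 < \cdots < H_m = \S_n$, the coatom $H_{m-1}$ is a maximal subgroup of $\S_n$, which by the O'Nan--Scott theorem falls into one of four classes: (i) the alternating group $\A_n$; (ii) an intransitive subgroup $\S_k \times \S_{n-k}$ for some $1 \leq k < n$; (iii) a transitive imprimitive subgroup $\S_a \wr \S_b$ with $ab = n$ and $a, b \geq 2$; or (iv) a primitive subgroup not containing $\A_n$. Cases (i)--(iii) yield to induction applied to proper factors, combined with the identity $b(k) + b(n-k) = b(n) + c$, where $c$ is the number of carries in the binary addition $k + (n-k)$, which is exactly what allows the $\lceil 3n/2 \rceil - b(n)$ estimate to be preserved under the recursion. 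Case (iv) is controlled by the Praeger--Saxl bound $|H_{m-1}| \leq 4^n$, which forces any chain through $H_{m-1}$ to have length at most $2n + 1$, comfortably within the target bound for $n$ sufficiently large; small $n$ are handled by direct inspection.

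The main obstacle is Case (iv): bounding orders of primitive subgroups of $\S_n$ requires either the classification of finite simple groups or a substantial group-theoretic estimate such as Praeger--Saxl. A secondary difficulty is the arithmetic bookkeeping with the $b(n)$ term: the binary-digit correction arises precisely because splitting $n$ into proper summands saves one unit per binary carry, so recovering the sharp constant (rather than a formula off by an additive error) demands a careful treatment of the arithmetic identities together with verification of the base cases $n \leq 4$, where the intransitive or imprimitive reductions are unavailable or degenerate.
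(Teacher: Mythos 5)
The first thing to note is that the paper does not prove Theorem \ref{thm:HtSn} at all: it is quoted from Cameron, Solomon and Turull \cite{CST1989} as background and motivation, with the additive constant adjusted for the differing convention on height (counting elements of a chain rather than covering relations), as the paper remarks immediately after the statement. There is therefore no internal proof to compare yours against, and any proof must reproduce the external argument. Your sketch is a recognisable reconstruction of the Cameron--Solomon--Turull strategy: a lower bound via an explicit chain assembled from composition series of Sylow $2$-subgroups of symmetric groups of $2$-power degree followed by coarsenings of the binary partition of $n$, and an upper bound by induction along a maximal chain using the O'Nan--Scott description of maximal subgroups of $\S_n$ together with order bounds for primitive groups (which, as you say, is where the deep input enters).

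Two concrete problems remain in the sketch as written. First, the lower-bound arithmetic does not close: $\sum_i 3\cdot 2^{a_i-1} + (k-1) = \tfrac{3n}{2} + b(n) - 1$, which is not $\lceil 3n/2\rceil - b(n)$. The correct bookkeeping is that a maximal chain in $\S_{2^a}$ (for $a\geq 1$) has $3\cdot 2^{a-1}-2$ covering relations, these counts add over the direct factors of the Young subgroup (giving $\tfrac{3n}{2}-2b(n)$ for $n$ even), and the $b(n)-1$ coarsening steps then yield $\tfrac{3n}{2}-b(n)-1$ covering relations, i.e.\ $\tfrac{3n}{2}-b(n)$ subgroups; odd $n$ needs the trivial factor $\S_1$ treated separately. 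You have conflated the two length conventions that the paper is explicitly careful to distinguish, and the reader cannot ``rearrange'' your sum into the claimed answer. Second, in the upper bound the coatom case of the alternating group does not ``yield to induction applied to proper factors'': the alternating group is not a symmetric group of smaller degree, so one must carry a companion bound for the chain length of alternating groups through the induction, as is done in \cite{CST1989}. The imprimitive case likewise needs the identity for the length of a wreath product and a separate numerical verification, not just the binary-carry identity, which only governs the intransitive case.
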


The formula given in \cite{CST1989} in fact has an additional term $-1$. This discrepancy is caused by the (slightly) different definition of height (which they call length): while in \cite{CST1989} it is taken to mean the number of covering relationships in a longest chain (at least in the finite case), for us it is the number of elements.

The following \JEnew{result concerning} the subsemigroup lattice of the full transformation monoid~$\T_n$ was obtained in \cite{CGMP2017}:

\begin{thm}[Cameron, Gadoleau, Mitchell, Peresse {\cite[Theorem 4.1]{CGMP2017}}]\label{thm:HtSubTn}
For $n\geq1$, we have
\[
\Ht(\Sub(\T_n)) \geq e^{-2} n^n -2e^{-2}(1-e^{-1})n^{n-1/3}-o(n^{n-1/3}).
\]
\end{thm}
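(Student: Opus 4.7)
The plan is to exhibit an explicit chain of subsemigroups of $\T_n$ whose length realises the claimed lower bound. The natural ambient is the subsemigroup $K = \{f \in \T_n : \im(f) \subseteq [n-2]\}$ of transformations whose image avoids two distinguished points $n-1, n$. It has $|K| = (n-2)^n$, and since $(1-2/n)^n = e^{-2}(1 + O(1/n))$ one has $(n-2)^n = e^{-2} n^n + O(n^{n-1})$, supplying the leading term.

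The key structural observation is that $K$ is isomorphic to a semidirect product. Writing each $f \in K$ as a pair $(\tilde f, \hat f)$ with $\tilde f := f|_{[n-2]} \in \T_{n-2}$ and $\hat f := f|_{\{n-1,n\}} \in [n-2]^{\{n-1,n\}}$, a short calculation shows that multiplication in $K$ reads $fg = (\tilde f\,\tilde g,\ \tilde f \circ \hat g)$. Consequently the map $\pi \colon K \to \T_{n-2}$, $f \mapsto \tilde f$, is a surjective semigroup homomorphism whose fibres all have size $(n-2)^2$. I would then fix a long chain $\varnothing = S_0 \subsetneq S_1 \subsetneq \cdots \subsetneq S_M = \T_{n-2}$ in $\Sub(\T_{n-2})$ (obtained by induction on $n$ or by an explicit construction), pulled back via $\pi$ to a chain $\pi^{-1}(S_0) \subsetneq \cdots \subsetneq \pi^{-1}(S_M) = K$ in $\Sub(K) \subseteq \Sub(\T_n)$. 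Between consecutive pullbacks, which differ by the single fibre $\pi^{-1}(\alpha_i)$ over the new element $\alpha_i \in S_i \setminus S_{i-1}$, I would refine by adding the $(n-2)^2$ elements of that fibre one at a time, in an order compatible with closure under the semidirect-product multiplication.

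The main obstacle is bookkeeping the overhead at each fibre opening. When an element $f$ with $\tilde f = \alpha_i$ is first admitted, some other elements of $\pi^{-1}(\alpha_i)$ are \emph{forced} into the subsemigroup as products $gh$ with $g, h$ already present and $\tilde g \tilde h = \alpha_i$; these must be added en bloc rather than singly, shortening the refinement by the forced count at this stage. The required correction $-2e^{-2}(1-e^{-1}) n^{n-1/3} - o(n^{n-1/3})$ amounts to the statement that, for a well-chosen outer chain, the total forced count over all fibres is of order $n^{n-1/3}$. I expect the technical core to be a careful optimisation of the outer chain in $\T_{n-2}$ (weighting idempotent or low-rank elements advantageously) combined with a Laplace/Stirling estimate showing that the average forcing count per fibre is of order $n^{5/3}$; summed over the $\approx e^{-2} n^{n-2}$ fibres, this produces the stated $n^{n-1/3}$ total, with the explicit constant $2e^{-2}(1-e^{-1})$ emerging from the local-limit behaviour of the image-hitting statistic of a random transformation.
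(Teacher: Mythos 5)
You should first be aware that the paper you are working from does not prove Theorem \ref{thm:HtSubTn} at all: it is imported verbatim from \cite{CGMP2017}, so your attempt can only be measured against the argument in that reference. That argument is structurally quite different from yours. It descends the chain of ideals $I_r=\set{f\in\T_n}{\rank(f)\leq r}$ using the identity $\Ht(\Sub(S))=\Ht(\Sub(I))+\Ht(\Sub(S/I))-1$ for an ideal $I$ (quoted in Section \ref{sect:Rees} above), and inside each principal factor it exhibits a large set $X_r$ of rank-$r$ maps \emph{all} of whose pairwise products have rank strictly less than $r$ (for instance, maps whose image avoids a fixed point $a$ while $\{a\}$ is a singleton kernel class: the $r$ points of $\im(f)$ must then fall into the $r-1$ remaining kernel classes of $g$, so $g$ is non-injective on $\im(f)$ and $\rank(fg)<r$). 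For such a set, $I_{r-1}\cup Y$ is a subsemigroup for \emph{every} $Y\subseteq X_r$, so the elements of $X_r$ are adjoined strictly one at a time with no forced elements whatsoever, and the only remaining work is the estimation of $\sum_r|X_r|$. In other words, the published proof is engineered precisely so that the forcing problem you identify never arises.

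Your proposal has a genuine gap exactly where you place the ``technical core''. First, your accounting presumes that the outer chain $S_0\subsetneq\cdots\subsetneq S_M=\T_{n-2}$ adjoins one element of $\T_{n-2}$ per step, so that each step opens a single fibre $\pi^{-1}(\alpha_i)$. It cannot: $M\leq\Ht(\Sub(\T_{n-2}))$, which is of order $e^{-2}(n-2)^{n-2}$ and in particular a bounded proportion of $|\T_{n-2}|=(n-2)^{n-2}$, so a positive fraction of the steps must adjoin several elements of $\T_{n-2}$ at once, hence open several fibres (totalling $k(n-2)^2$ new elements) simultaneously, and the proposal offers no mechanism for refining such a step. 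Secondly, and more seriously, the claim that a well-chosen outer chain keeps the total forced count down to $2e^{-2}(1-e^{-1})n^{n-1/3}+o(n^{n-1/3})$ \emph{is} the theorem: when the first element $f$ of a fresh fibre $\pi^{-1}(\alpha)$ is adjoined to a subsemigroup $U$, one needs $fu$, $uf$ and $f^2$ to lie in $U\cup\{f\}$ for every $u\in U$, and these products are scattered over the fibres $\pi^{-1}(\alpha\tilde u)$ and $\pi^{-1}(\tilde u\alpha)$ as $\tilde u$ ranges over all previously opened elements of $\T_{n-2}$. No argument is given that these constraints cost only $O(n^{5/3})$ per fibre on average, and the constant $2e^{-2}(1-e^{-1})$ is asserted rather than derived. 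As written, the construction guarantees only a chain of length $M+1$ (the unrefined pullback), so no part of the stated bound is actually established; the route via rank ideals and rank-dropping ``null'' sets reaches the target with far less machinery and should replace the fibre-refinement scheme.
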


Theorem \ref{thm:HtSn} can be viewed from a different angle, by recalling that the subgroups of a group $G$ are in one-one correspondence with transitive permutation representations of $G$, or, equivalently, with left (or right) congruences of $G$. The notion of left congruences of a semigroup~$S$ will be introduced explicitly in Section \ref{sect:prelim}, but for now let us just recall that they
form a lattice, which we denote by $\LCong(S)$.

So, motivated by the two results above one may ask: what is $\Ht(\LCong(\T_n))$? 
Towards the end of this paper we will prove the following:

\begin{thm}
\label{th:HtLCTn}
For $n\geq1$, we have
\[
\Ht(\LCong(\T_n)) = \sum_{r=1}^n \binom nr \Ht(\Sub(\S_r)).
\]
\end{thm}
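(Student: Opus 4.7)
The plan is to apply the general reduction theorem for heights of left congruence lattices developed earlier in the paper, within the framework of semigroup actions on $\L$-classes via Schützenberger groups. First, one recalls the Green's structure of $\T_n$. The $\D$-classes of $\T_n$ are $D_r = \{\alpha \in \T_n : \rank(\alpha) = r\}$ for $r = 1, \ldots, n$. Within $D_r$, two transformations are $\L$-related iff they have the same image, so $D_r$ contains exactly $\binom{n}{r}$ $\L$-classes (indexed by the $r$-subsets of $\{1,\ldots,n\}$); each group $\H$-class of $D_r$ is isomorphic to the symmetric group $\S_r$, which is therefore the Schützenberger group of $D_r$. Furthermore, the sets $I_r=\{\alpha\in\T_n:\rank(\alpha)\leq r\}$ form a chain of two-sided ideals, providing a natural filtration of $\T_n$ that is compatible with the $\D$-class stratification.

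The second step is to invoke the general theorem to express $\Ht(\LCong(\T_n))$ as a sum of contributions from each $\D$-class, with the contribution of $D_r$ equal to (number of $\L$-classes in $D_r$) times $\Ht(\Sub(G_r))$, where $G_r\cong\S_r$ is the Schützenberger group of $D_r$. Substituting the counts computed above yields
\[
\Ht(\LCong(\T_n)) = \sum_{r=1}^n \binom{n}{r}\,\Ht(\Sub(\S_r)),
\]
as required.

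The main obstacle is verifying that $\T_n$ satisfies the hypotheses of the general reduction theorem, particularly the stability and regularity conditions on its action on $\L$-classes, so that chains in $\LCong(\T_n)$ respect the $\D$-class stratification. The lower bound is obtained by an explicit construction: climbing the ideal chain $I_1\subsetneq I_2\subsetneq\cdots\subsetneq I_n=\T_n$ and, within each layer $D_r$, refining the congruence by using, for each of the $\binom{n}{r}$ images, a maximal chain in $\Sub(\S_r)$ realised through the Schützenberger group action on the corresponding $\L$-class. The matching upper bound comes from the general framework, which shows that refinements inside each $D_r$ are controlled by exactly the count $\binom{n}{r}\,\Ht(\Sub(\S_r))$, and that no additional chain length can be gained in the transitions between successive $\D$-classes. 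The key point that makes these quantities add, rather than interact, is that left multiplication by any element of $\T_n$ maps each $\L$-class into itself or into a strictly smaller $\D$-class, so that refinements of the congruence on $D_r$ may be made independently of those on the lower ranks.
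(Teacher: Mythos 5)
Your overall strategy is the paper's own: compute the Green's structure data for $\T_n$ (which you do correctly) and feed it into the general reduction machinery (Theorem \ref{thm:S3} and Corollary \ref{co:S3}). The genuine gap sits exactly at the point you flag as ``the main obstacle'' and then leave unresolved. The hypothesis that must be verified is not ``stability and regularity'' --- those hold for every finite regular semigroup and are not sufficient on their own. The substantive condition is $\H$-\emph{separability} of each non-minimal $\L$-class (Proposition \ref{pr:sep}): for $x,y$ in the same $\L$-class with $(x,y)\notin\H$ there must exist $s$ with exactly one of $sx$, $sy$ in that $\L$-class; equivalently, by Proposition \ref{prop:sep}, the idempotent-pattern matrix $M(D_r)$ must be row-faithful. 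This is what makes the quotient ${}^S\ol L/\H_{\ol L}$ congruence-free and hence caps the contribution of each $\L$-class in $D_r$ at exactly $\Ht(\Sub(\S_r))$; without it the interval above $\H_{\ol L}$ can have height anywhere up to $|L/\H|+1$ (see the discussion after Corollary \ref{co:LSGa}), and the contributions would not simply add. Your proposal neither states nor checks this condition. For $\T_n$ the verification amounts to showing that any two distinct kernels with $r$ classes are distinguished by some $r$-element image that is a transversal of one but not the other --- true, but it requires an argument, and it is the one semigroup-specific input that the general theorem cannot supply.

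A second omission concerns the minimal $\D$-class $D_1$. The general theorem necessarily treats the minimal class separately (a minimal $\L$-class is never $\H$-separable unless it is a single $\H$-class), and the clean formula with no extra additive constant holds for $\LCong(\T_n)$ only because each $\L$-class of $D_1$ is a singleton, hence an $\H$-class, so that Corollary \ref{co:S3}\ref{it:S31} applies. This is not a formality: in the dual computation the minimal $\R$-class is all of $D_1$, is not an $\H$-class, and one must instead prove that $\H_{R}$ is the unique maximal congruence on $R^{\T_n}$, which is exactly why $\Ht(\RCong(\T_n))$ acquires the extra $+1$. Your argument as written does not distinguish these cases, so it would produce the same (wrong) formula for the right congruence lattice.
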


\NRrev{Whereas the left and right congruence lattices of a group 
are always isomorphic, 
the same is in general not true for semigroups.
\JErev{For example, $\LCong(\T_3)$ and $\RCong(\T_3)$ have size $120$ and~$287$, respectively.}
Thus, one can equally ask about the height of the lattice $\RCong(\T_n)$, where we obtain the following:}

\begin{thm}
\label{th:HtRCTn}
For $n\geq 2$, we have
\[
\Ht(\RCong(\T_n)) = 1+\sum_{r=1}^n S(n,r) \Ht(\Sub(\S_r)),
\]
where $S(n,r)$ is the Stirling number of the second kind.
\end{thm}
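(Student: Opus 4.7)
The plan is to specialize the general framework for heights of right-congruence lattices developed earlier in the paper to the concrete case $S = \T_n$, using the classical Green's structure of $\T_n$ to supply the combinatorial data.

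Recall that $\T_n$ has a principal series of ideals $I_r = \{f \in \T_n : \rank(f) \leq r\}$ for $r = 1,\dots,n$, with $J_r = I_r \setminus I_{r-1}$ (taking $I_0 = \varnothing$) being its $\J$-classes. Each $J_r$ consists of $S(n,r)$ $\L$-classes (indexed by kernels of rank-$r$ maps, i.e., partitions of $\{1,\dots,n\}$ into $r$ blocks), $\binom{n}{r}$ $\R$-classes (indexed by images), and group $\H$-classes isomorphic to $\S_r$. Writing $\rho_r$ for the Rees congruence on $\T_n$ associated with $I_r$ (with $\rho_0 = \Delta$ and $\rho_n = \nabla$), one obtains the chain $\rho_0 < \rho_1 < \cdots < \rho_n$ in $\RCong(\T_n)$, which will serve as the backbone of the maximum chain.

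Next, I would apply the general theorem to evaluate the height of each Rees interval $[\rho_{r-1},\rho_r]$ in $\RCong(\T_n)$. Because right multiplication preserves kernels, it is the number of $\L$-classes of $J_r$---namely $S(n,r)$---that controls the multiplicity of the Schützenberger-group contribution, in contrast to the dual left-congruence formula (Theorem~\ref{th:HtLCTn}), where the $\R$-class count $\binom{n}{r}$ plays this role. The general machinery should give
\[
\Ht\bigl([\rho_{r-1},\rho_r]\bigr) = S(n,r)\,\Ht(\Sub(\S_r)) + 1,
\]
where the ``$+1$'' records the two Rees endpoints of the interval and the main summand records the chain built by selecting, within each of the $S(n,r)$ $\L$-classes of $J_r$, a maximum subgroup chain of $\S_r$. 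Gluing the $n$ intervals along their $n-1$ shared Rees endpoints then yields
\[
\Ht(\RCong(\T_n)) = \sum_{r=1}^n \bigl(S(n,r)\,\Ht(\Sub(\S_r)) + 1\bigr) - (n-1) = 1 + \sum_{r=1}^n S(n,r)\,\Ht(\Sub(\S_r)),
\]
with the overall ``$+1$'' emerging as the telescoping residue. The hypothesis $n \geq 2$ is needed so that the Rees chain has an interior endpoint and the telescoping is non-trivial.

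The main obstacle will be verifying the interval-height identity $\Ht([\rho_{r-1},\rho_r]) = S(n,r)\,\Ht(\Sub(\S_r)) + 1$, and in particular the matching upper bound. This requires coordinatizing every right congruence in such an interval by a family of subgroups of $\S_r$ (one per $\L$-class of $J_r$) together with Rees-merging data, and demonstrating that any strict ascending chain of such coordinates is bounded by the claimed quantity. It is also here that the right-sided asymmetry with the left-congruence case of Theorem~\ref{th:HtLCTn} emerges: the roles of kernel and image swap between the two settings, and this small change in the local combinatorics produces the global difference between the two height formulae.
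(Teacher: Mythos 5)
Your backbone is sound and is in fact exactly the paper's machinery in disguise: the chain of Rees congruences $\rho_{I_0}<\rho_{I_1}<\dots<\rho_{I_n}$ consists of modular elements of $\RCong(\T_n)$ by Proposition \ref{pr:reesinter} (applied to the right act $\T_n^{\T_n}$, whose subacts are the right ideals), and the telescoping identity you use is Proposition \ref{pro:addeq}\ref{it:addeq2}. Your arithmetic is also correct, and your observation that the multiplicity is governed by kernels (of which there are $S(n,r)$ at rank $r$) rather than images is the right structural point --- although you have the labels backwards relative to the paper's conventions: with maps composed left-to-right, the $\R$-classes of $\T_n$ are indexed by kernels and the $\L$-classes by images, and it is the $\R$-classes that are the $\G$-classes of the right act and hence carry the multiplicity $S(n,r)$.

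The genuine gap is that the entire content of the proof --- the interval-height identity $\Ht([\rho_{r-1},\rho_r])=S(n,r)\,\Ht(\Sub(\S_r))+1$, and especially its upper bound --- is deferred rather than proved, and your plan for it conceals two distinct issues. For $r\geq 2$ one must show that the congruence lattice of the principal factor of each kernel class is exactly $\Sub(\S_r)^\top$; this is the $\H$-separability property, established in the paper via the isomorphism $[\Delta_{\ol R},\H_{\ol R}]\cong\Sub(\Ga_R)$ (the dual of Proposition \ref{pr:Hrest}) together with the maximality of $\H_{\ol R}$ (the dual of Proposition \ref{pr:sep}), which for $\T_n$ is verified by checking column-faithfulness of the idempotent matrices $M(D_r)$ as in Proposition \ref{prop:sep}. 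For $r=1$ this argument is unavailable: $D_1$ is a single $\R$-class which is not an $\H$-class, so it is not $\H$-separable ($\nabla_{D_1}\cup\{(0,0)\}$ is a proper congruence of $\ol{D_1}{}^{\T_n}$ strictly above $\H_{\ol{D_1}}$), and one must instead prove directly that the right act of constant maps is congruence-free --- the paper does this by exhibiting, for distinct constants $\al,\be$ with images $\{x\},\{y\}$ and arbitrary constants $\ga,\de$, a map $\ze$ with $x\ze=u$, $y\ze=v$ so that $(\ga,\de)=(\al\ze,\be\ze)\in(\al,\be)^\sharp$. Your uniform formula happens to return the correct value $2$ at $r=1$, but for a different reason than at $r\geq2$, and this is also where the restriction $n\geq2$ genuinely enters (for $n=1$ the act $D_1$ is a singleton and the extra $+1$ collapses), not in the non-triviality of the telescoping. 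Without these two verifications the proposal is an outline of the paper's proof rather than a proof.
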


Comparing Theorems \ref{thm:HtSn}--\ref{th:HtRCTn} we observe that: 
\bit
\item
the height of the subgroup lattice (and hence of the left and right congruence lattices) of the symmetric group $\S_n$ grows linearly with $n$, i.e.~its degree;
\item
 the height of the subsemigroup lattice of the full transformation monoid $\T_n$ grows linearly with $n^n$, i.e.~its order; 
 \item
 the height of $\LCong(\T_n)$ grows linearly with $n2^n$;  
 this follows from the combinatorial identity
 $ \sum_{r=1}^n r\binom{n}{r}=n2^{n-1}$;
 \item 
 the height of $\RCong(\T_n)$ grows linearly with
 $\sum_{r=1}^n rS(n,r)={B(n+1)-B(n)}\sim B(n+1)$, where $B(n)$ is the $n$th Bell number; see OEIS,  \cite[A005493]{OEIS}.
\eit

We remark in passing that, for a finite or countably infinite semigroup $S$, the heights of $\Cong(S)$, $\LCong(S)$ and 
$\RCong(S)$ can  be as large as $|S|$ or $2^{\aleph_0}$, respectively.
This can be seen for instance by considering null semigroups, for which all equivalence relations are congruences.

The current paper is devoted to building a theoretical framework within which we will be able to prove results like Theorems \ref{th:HtLCTn} and \ref{th:HtRCTn}.
Here is a brief summary.
We begin our development in Sections \ref{sect:algebras} and \ref{sect:Rees} by introducing some elements of general (universal) algebra concerning quotients, principal factors and Rees subalgebras.
We also recall the notion of a modular element in a lattice, which is the key property that enables inductive proofs of our main formulae; see Proposition \ref{pro:srs}.
Then we specialise to unary algebras in Section \ref{sect:unary}, where we prove a formula expressing the height of the congruence lattice of a unary algebra $A$ in terms of heights of quotients corresponding to strongly connected components of $A$ (Theorem \ref{thm:G}).
All the subsequent results in the paper have this result as their foundation.
We then move on to consider our main topic in Sections \ref{sect:prelim}--\ref{sect:applications}, the heights of the one- and two-sided congruence lattices of a semigroup.
The fact that left congruences are intimately related with left $S$-acts (transformation representations), and that the latter are a special case of unary algebras, provides the necessary link with the preceding development,
and enables us to  
express the height of 
$\LCong(S)$ in terms of heights of left $S$-acts corresponding to the $\L$-classes of $S$ (Theorems \ref{thm:S} and \ref{thm:S2}). 
We then analyse such acts, and bring the associated Sch\"{u}tzenberger groups into play, in Propositions~\ref{pr:Hrest} and \ref{pr:sep}.
Putting these components together, we obtain a general formula for the height of $\LCong(S)$, under certain additional assumptions, in Theorem~\ref{thm:S3} and Corollary \ref{co:S3}, of which Theorem \ref{th:HtLCTn} is a special instance.
We then note that all the foregoing considerations hold for the lattice of right congruences of $S$, via an obvious left-right duality, and state the main results in Section \ref{sect:right}.
The lattice of two-sided congruences is treated in Section \ref{sect:two} by considering $S$ as an $(S,S)$-biact, yet another special case of unary algebras.
With the theoretical development now complete, the paper then culminates in Section \ref{sect:applications}, where we give explicit formulae for the heights of the lattices of left, right and two-sided congruences for many of the most important and well-studied families of monoids in the literature.
In addition to the full transformation monoid $\T_n$, these include the partial transformation monoid~$\PT_n$, the symmetric inverse monoid $\I_n$, the monoid of order-preserving transformations~$\O_n$, the full linear monoid~$\M(n,q)$, the partition monoid~$\P_n$, the Brauer monoid~$\B_n$ and the Temperley--Lieb monoid $\TL_n$.
The formulae are given in Table~\ref{tab:formulae}, and some calculated values in Table \ref{tab:small}.

%
%
%

\section{Algebras: congruences, $\G$-classes and principal factors}\label{sect:algebras}

In this section we introduce the basic concepts and notation for \JEnew{universal} algebras; for a more systematic introduction see \cite{BS1981}.

A \emph{signature} (or \emph{type}) of algebras is an indexed family $F=(f_i)_{i\in I}$ of symbols, and associated integers $\ar(f_i)\geq0$ called \emph{arities}.
An \emph{algebra} of type $F$ is a pair $\A=(A,F^\A)$, where $A$ is a set, and $F^\A=(f_i^\A)_{i\in I}$ is an indexed collection
of operations $f_i^\A: A^{r_i}\rightarrow A$ where $r_i=\ar(f_i)$; \JEnew{these are called the \emph{basic} operations of}
\NRrev{$\A$}.
When there is no danger of confusion we omit superscripts~$\A$, thus identifying operation symbols \JEnew{with the} operations they represent, and we also treat $F$ and~$F^\A$ as sets.
Furthermore, we often identify the algebra $\A$ with its underlying set $A$.
Note that in principle we allow an algebra to be empty, but this cannot be the case for an algebra whose signature contains constant (nullary) operations.

A \emph{subalgebra} of an algebra $\A$ is any algebra $\B=(B,F^\B)$ where $B\subseteq A$ and \JEnew{$f_i^\B=f_i^\A\restr_B$} for all $i\in I$.
We write $\B\leq\A$ to indicate that $\B$ is a subalgebra of $\A$.  The collection of all subalgebras of $\A$ forms a lattice under inclusion. This lattice is denoted by $\Sub(\A)$.

A \emph{congruence} of an algebra \NRrev{$(A,F)$} is an equivalence relation $\si$ on $A$ that is \emph{compatible} with each operation $f$ from $F$, in the sense that:
\[
(a_1,b_1),\ldots,(a_n,b_n)\in\si \implies (f(a_1,\ldots,a_n) , f(b_1,\ldots,b_n))\in\si.
\]
The set $\Cong(\NRrev{A})$ of all congruences of $\NRrev{A}$ is a lattice under inclusion, called the \emph{congruence lattice} of $\NRrev{A}$.  The bottom and top elements of $\Cong(\NRrev{A})$ are the \emph{trivial (a.k.a.~diagonal)} and \emph{universal} relations:
\[
\De_A = \set{(a,a)}{a\in A} \AND \nab_A = A\times A,
\]
respectively.  The \emph{quotient} $A/\si$ of $A$ by $\si$ is the set $\set{ [a]}{a\in A}$ of $\si$-classes with the induced operations
$f^{A/\si}([a_1],\dots,[a_n]):=[f^A(a_1,\dots,a_n)]$.


Consider an algebra $A$.  For $a\in A$, we write $\la a\ra$ for the subalgebra of $A$ generated by $a$;
it consists of all elements of the form $f(a,\dots,a)$, where $f$ is a \emph{term operation}, i.e.~an operation obtained by composing the basic operations.
In particular, $\la a\ra$ contains all constants of $A$, i.e. the images of all nullary operations.  We define a preorder ${\leqG}={\leq_{\G^A}}$ on~$A$, which then yields an equivalence relation $\G=\G^A$ on $A$ and a partial order 
$\leq$ on the
set $A/\G=\set{ G_a}{ a\in A}$ of $\G$-classes, as follows:
\NRrev{\begin{equation}\label{eq:G}
a\leqG b \iff \la a\ra \sub \la b\ra,\quad
a\rG b \iff \la a\ra = \la b\ra,\quad
G_a\leq G_b \iff a\leqG b\qquad
\JEnew{\text{for $a,b\in A$.}}
\end{equation}}
The well-known Green's relations $\R$, $\L$, $\J$ \JEnew{and the associated pre-orders} on a semigroup will later be seen as special instances of these relations, which explains the choice of notation here.

\begin{lemma}\label{lem:GAB}
If $A$ is an algebra, and if $B\leq A$ is a subalgebra, then
\[
{\leq_{\G^B}} = {\leq_{\G^A}}\restr_B \AND \G^B = \G^A\restr_B.  
\]
\end{lemma}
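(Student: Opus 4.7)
The plan is to reduce both equalities to a single observation: for every $a\in B$, the subalgebra of $B$ generated by $a$ coincides with the subalgebra of $A$ generated by $a$. Once this is established, the two stated identities are immediate from the definition \eqref{eq:G}.

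First I would fix $a\in B$ and write $\langle a\rangle^A$ and $\langle a\rangle^B$ for the two generated subalgebras. Each of them, by the description given just before the lemma, consists of all elements of the form $f(a,\dots,a)$ where $f$ ranges over term operations (of $A$ and $B$ respectively). Since the basic operations of $B$ are, by definition of a subalgebra, the restrictions $f_i^A\restr_B$, an easy induction on the construction of term operations shows that each term operation of $B$ is the restriction to $B$ of the corresponding term operation of $A$. In particular $\langle a\rangle^B \subseteq \langle a\rangle^A$. For the reverse inclusion, since $B$ is closed under all basic operations of $A$ and contains $a$, a parallel induction shows $f^A(a,\dots,a)\in B$ for every term operation $f$, and that this element equals $f^B(a,\dots,a)$. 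Hence $\langle a\rangle^A \subseteq \langle a\rangle^B$, giving equality.

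With this in hand, the lemma follows directly: for $a,b\in B$ we have
\[
a\leq_{\G^B} b \iff \langle a\rangle^B\sub\langle b\rangle^B \iff \langle a\rangle^A\sub\langle b\rangle^A \iff a\leq_{\G^A} b,
\]
which establishes the first equality. The second equality then follows by symmetrising, since $\G^A$ (respectively $\G^B$) is the symmetrisation of $\leq_{\G^A}$ (respectively $\leq_{\G^B}$).

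No real obstacle is anticipated; the only point that needs care is the induction establishing that term operations of $B$ agree with the restrictions of the term operations of $A$, but this is a standard and routine argument in universal algebra. I would keep the proof short and schematic, citing \cite{BS1981} if a more detailed discussion of term operations is desired.
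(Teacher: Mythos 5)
Your proposal is correct and follows essentially the same route as the paper, which likewise reduces both identities to the single observation that the subalgebra of $B$ generated by an element $b\in B$ equals the subalgebra of $A$ generated by $b$. The additional detail you supply about term operations of $B$ being restrictions of those of $A$ is a routine elaboration the paper leaves implicit.
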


\begin{proof}
This follows from the  fact that the subalgebra of $B$ generated by $b\in B$ is the same as the subalgebra of $A$ generated by~$b$.
\end{proof}

Consider an algebra $A$ and an arbitrary subset $X\sub A$.  
Let $0$ be a symbol not belonging to~$A$, and let 
\[
\ol X = X\cup\{0\}.
\]
We can turn $\ol X$ into an algebra of the same signature as $A$ in the following way, by setting, for any $n$-ary basic operation symbol $f$, and for any $x_1,\ldots,x_n\in\ol X$:
\[
\ol f(x_1,\ldots,x_n) := \begin{cases}
f(x_1,\ldots,x_n) &\text{if $x_1,\ldots,x_n,f(x_1,\ldots,x_n)\in X$,}\\
0 &\text{otherwise.}
\end{cases}
\]
We call $\ol X$ the \emph{principal factor} \JEnew{associated to} $X$.
We note that $\ol X$ need not be an algebra of the same `kind' as $A$; e.g.~if $A$ is a group and $X\neq\es$, then $\ol X$ is never a group.

\begin{lemma}\label{lem:a0}
Suppose $\si\in\Cong(\ol G)$, where $G$ is a $\G$-class of an algebra $A$ \JEnew{with no} \NRrev{constants}.  If $a\mr\si0$ for some $a\in G$, then $\si=\nab_{\ol G}$.
\end{lemma}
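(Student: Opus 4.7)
The plan is to show that the $\si$-class $K := [0]_\si$ is all of $\ol G$, which is equivalent to $\si = \nab_{\ol G}$. Since every congruence class is closed under the basic operations, $K$ is a subalgebra of $\ol G$; by hypothesis it already contains both $0$ and $a$. It therefore suffices to prove that every $b \in G$ lies in $K$.

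Fix $b \in G$. Because $b \rG a$, we have $\la b\ra = \la a\ra$, so in particular $b \in \la a\ra$. Since $A$ has no nullary operations, this forces $b = t^A(a,\ldots,a)$ for some term $t$ in the signature of $A$ (using the variable at least once). Repeated application of the compatibility of $\si$ with each basic operation, starting from $a\mathrel{\si}0$ and working up the structure of $t$, yields
\[
\ol t^{\ol G}(a,\ldots,a) \;\si\; \ol t^{\ol G}(0,\ldots,0) \;=\; 0,
\]
the final equality holding because $\ol f(0,\ldots,0)=0$ for every basic operation $\ol f$ (as $0\notin G$), and this propagates up the term.

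The main obstacle is to show that the left-hand side of the display equals $b$ — that is, that the evaluation of $t$ in $\ol G$ on the input $(a,\ldots,a)$ never collapses to $0$ at an intermediate subterm, but faithfully reproduces the $A$-evaluation. This is where the hypothesis that $G$ is a \emph{full} $\G$-class (rather than an arbitrary subset of $A$) becomes essential: any intermediate subterm value $c$ certainly lies in $\la a\ra$, and in the settings that matter for the rest of the paper — notably unary algebras, in which $b$ is obtained from $c$ by applying further (unary) operations — one also has $b\in\la c\ra$, whence $\la a\ra = \la b\ra \subseteq \la c\ra \subseteq \la a\ra$, forcing $c \rG a$ and so $c\in G$. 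Once every intermediate subterm is known to stay inside $G$, the display collapses to $b\mathrel{\si}0$, placing $b$ in $K$ and completing the proof.
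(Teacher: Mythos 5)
Your argument is, in outline, the same as the paper's: write $b=f(a,\ldots,a)$ for a term operation $f$ that genuinely uses its argument (possible because $b\in\la a\ra$ and $A$ has no constants), then push $a\mr\si 0$ through the corresponding term operation of $\ol G$ to get $b\mr\si\ol f(0,\ldots,0)=0$. Two comments on the details. First, your justification that $K=[0]_\si$ is a subalgebra --- ``every congruence class is closed under the basic operations'' --- is false as a general statement (congruence classes of an algebra are usually not subalgebras; think of cosets of a normal subgroup). What is true, and all you need, is that the class of $0$ is closed because $\ol f(0,\ldots,0)=0$; in any case you never really use the subalgebra structure of $K$. Second, and more importantly, you have put your finger on precisely the step that the paper's proof passes over in silence: the identity $\ol f(a,\ldots,a)=f(a,\ldots,a)$ requires every intermediate subterm value to remain inside $G$, and your argument for this ($c\in\la a\ra$ together with $b\in\la c\ra$ forces $c\rG a$) is only valid when $b$ is generated by the single element $c$, which is automatic for unary algebras but fails for higher arities: if $b=g(c_1,c_2)$ one only gets $b\in\la c_1,c_2\ra$, not $b\in\la c_1\ra$. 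So, as you yourself concede, your proof establishes the lemma for unary algebras but not in the stated generality.

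This gap is real, but it is not yours alone: the paper's proof asserts $f(a,\ldots,a)=\ol f(a,\ldots,a)$ ``using $a,b\in G$'' with no further argument, and in fact the lemma fails for general algebras. Take $A=\{a,b,c_1,c_2\}$ with unary operations $h_1,h_2,k$ and a binary operation $g$, where $h_i(a)=h_i(b)=c_i$, $k(a)=k(b)=a$, $g(c_1,c_2)=b$, every operation fixes $c_1$ and fixes $c_2$, and all remaining values of $g$ equal $c_1$. Then $\la a\ra=\la b\ra=A$ while $\la c_i\ra=\{c_i\}$, so $G=\{a,b\}$ is a $\G$-class; in $\ol G$ the operations $\ol h_1,\ol h_2,\ol g$ are constant with value $0$ and $\ol k$ sends $a,b\mapsto a$ and $0\mapsto 0$, whence the partition with classes $\{a,0\}$ and $\{b\}$ is a non-universal congruence relating $a$ to $0$. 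The culprit is exactly the phenomenon you flagged: $b=g(h_1(a),h_2(a))$ has subterm values $c_1,c_2\notin G$. The statement and both proofs are fine in every situation where the lemma is actually invoked --- for unary algebras (Proposition \ref{pr:sep}, via $S$-acts), and when $G$ is additionally a subalgebra (Lemma \ref{lem:GS}), where all subterm values lie in $\la a\ra\sub G$. So your hedged formulation is the honest one; to make it a complete proof you should add the hypothesis that $A$ is unary or that $G$ is a subalgebra, rather than leaving the general case implicitly open.
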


\pf
It suffices to show that $b\mr\si0$ for all $b\in G$, so fix some such $b$.  Since $a\rG b$, we have $b\in\la a\ra$,
and so $b=f(a,\ldots,a)$ for some term function $f$ of $A$ \NRrev{of arity $>0$ because $A$ has no constants.
 Using} $a,b\in G$, we have
\[
b = f(a,\ldots,a) = \ol f(a,\ldots,a) \mr\si \ol f(0,\ldots,0) = 0.  \qedhere
\]
\epf

\JEnew{
Lemma \ref{lem:a0} need not be true if $A$ has constant/nullary operations.  For example, consider the case in which $A=\{a,b\}$, \NRrev{with~$a$ and~$b$ both constants, and no other operations.}  Since $A=\la a\ra = \la b\ra$, it follows that $A$ is itself a $\G$-class.  But the equivalence $\si$ on $\ol A = A\cup\{0\}$ with $\si$-classes $\{a,0\}$ and $\{b\}$ is a non-universal congruence, yet $a\mr\si0$.
}

In the following statement, for a poset $P$, we denote by $P^\top = P\cup\{\top\}$ the poset obtained by adjoining a new top element $\top$ to $P$.

\begin{lemma}\label{lem:GS}
If a $\G$-class $G$ of an algebra $A$ \NRrev{with no constants} is also a subalgebra of~$A$, then
\[
\Cong(\ol G) \cong \Cong(G)^\top \AND \Ht(\Cong(\ol G)) = \Ht(\Cong(G))+1.
\]
\end{lemma}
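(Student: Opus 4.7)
The plan is to exhibit an explicit lattice isomorphism between $\Cong(\ol G)$ and $\Cong(G)^\top$, from which the height equality follows immediately since adjoining a new top element to any poset increases the height by exactly one.

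First I would unpack the structure of $\ol G$. Because $G$ is a subalgebra of $A$, every basic operation $f$ sends tuples from $G$ into $G$, so the definition of $\ol f$ gives $\ol f(g_1,\dots,g_n)=f(g_1,\dots,g_n)$ whenever all $g_i\in G$, while $\ol f$ returns $0$ as soon as any argument equals $0$. In other words, $0$ acts as an absorbing element in $\ol G$, and $G$ sits inside $\ol G$ as a subalgebra with the inherited operations.

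Next, I would classify the congruences on $\ol G$. Given $\si\in\Cong(\ol G)$, either $a\mr\si 0$ for some $a\in G$, in which case Lemma \ref{lem:a0} forces $\si=\nab_{\ol G}$, or the $\si$-class of $0$ is the singleton $\{0\}$; I call the latter congruences \emph{$0$-separating}. I would then exhibit the candidate maps $\Phi:\Cong(G) \to \Cong(\ol G)$, $\tau\mapsto \tau\cup\{(0,0)\}$, and $\Psi:\Cong(\ol G)\sm\{\nab_{\ol G}\} \to \Cong(G)$, $\si\mapsto \si\cap(G\times G)$. Checking that $\Phi(\tau)$ is a congruence on $\ol G$ reduces to two cases for each operation $\ol f$ (all arguments in $G$; or at least one argument equal to $0$) and is immediate from the closure of $G$ under the basic operations; showing $\Psi(\si)\in\Cong(G)$ again uses that $\ol f$ coincides with $f$ on $G$-tuples. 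The image of $\Phi$ is exactly the set of $0$-separating congruences, and $\Phi$, $\Psi$ are mutually inverse, inclusion-preserving bijections between $\Cong(G)$ and $\Cong(\ol G)\sm\{\nab_{\ol G}\}$.

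Finally, since $\nab_{\ol G}$ strictly contains every $0$-separating congruence, it sits as the unique element above the image of $\Phi$ in $\Cong(\ol G)$. This delivers the lattice isomorphism $\Cong(\ol G)\cong\Cong(G)^\top$, and consequently $\Ht(\Cong(\ol G))=\Ht(\Cong(G))+1$. I do not anticipate a real obstacle here: the substantive content is already packaged in Lemma \ref{lem:a0}, and the remaining work is the routine verification of closure under the basic operations on either side of the correspondence.
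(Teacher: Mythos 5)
Your proposal is correct and follows essentially the same route as the paper: both classify the congruences of $\ol G$ as either $\nab_{\ol G}$ or of the form $\tau\cup\{(0,0)\}$ for $\tau\in\Cong(G)$, using Lemma \ref{lem:a0} (which is where the no-constants hypothesis enters) for one inclusion and the fact that $G\leq\ol G$ for the other. The only cosmetic difference is that you package the correspondence as an explicit pair of mutually inverse maps, whereas the paper states it as an equality of sets of congruences.
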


\pf
It suffices to prove the first claim. For $\sigma\in\Cong(G)$ define $\overline{\sigma}:=\sigma\cup\{(0,0)\}$.
We prove the result by showing that
\[
\Cong(\ol G) =\set{ \overline{\sigma}}{ \sigma\in\Cong(G)} \cup \{\nab_{\ol G}\} .  
\]

\pfitem{($\supseteq$)}
For $\si\in\Cong(G)$, clearly $\overline{\sigma}$ is an equivalence on $\ol G$.  To check compatibility, suppose $f:A^n\to A$ is an operation of $A$, and suppose $(a_1,b_1),\ldots,(a_n,b_n)\in\overline{\sigma}$.  
From the definition of~$\overline{\sigma}$ we have $a_i=0 \iff b_i=0$ ($i=1,\dots,n$).
If $a_i=0$ for some $i$ then 
\[
(\ol f(a_1,\ldots,a_n),\ol f(b_1,\ldots,b_n)) = (0,0) \in \ol\si.
\]
Otherwise, $a_i,b_i\not=0$ for all $i$, in which case
\[
(\ol f(a_1,\ldots,a_n),\ol f(b_1,\ldots,b_n))=(f(a_1,\ldots,a_n),f(b_1,\ldots,b_n))\in\si\sub\ol\si.
\]

\pfitem{($\sub$)}
By Lemma \ref{lem:a0}, for $\nabla_{\overline{G}}\neq \tau\in\Cong(\overline{G})$ we have $\tau=\tau\restr_G\cup\{(0,0)\}$.
But $G\leq\overline{G}$ (as $G\leq A$ by assumption), and so $\tau\restr_G\in\Cong(G)$, which proves the inclusion, and
hence the lemma.
\epf

\section{Rees subalgebras, ideals and quotients}\label{sect:Rees}

In this section we develop  a general framework within which we can express the height of the congruence lattice of an algebra as a sum involving a special kind of subalgebra and an associated quotient; see Proposition \ref{pro:addeq}.

We denote the \JEnew{down-set (a.k.a.~principal ideal) and up-set (a.k.a.~principal filter)} of an element~$x$ of a lattice $L$ by
\[
x^\da = \set{a\in L}{a\leq x} \AND x^\ua = \set{a\in L}{a\geq x} ,
\]
respectively.  For any chains $C_1\sub x^\downarrow$ and $C_2\sub x^\uparrow$, their union $C_1\cup C_2$ is a chain in~$L$, with $|C_1\cap C_2|\leq 1$.  It quickly follows that
\begin{equation}\label{eq:Lxx}
\Ht(L) \geq \Ht(x^\downarrow)+\Ht(x^\uparrow)-1 \qquad\text{for all $x\in L$.}
\end{equation}
The next result establishes a sufficient condition for this to become an equality.
Following Stanley~\cite{Stanley1971}, we say that an element $x$ in a lattice $L$ is \emph{modular}
if it forms a modular pair (see \cite[Section V.2.5]{Gratzer2011}) with every other element of $L$, i.e.~if
\[
a\vee (x\wedge b)=(a\vee x)\wedge b \qquad\text{for all $a,b\in L$ with $a\leq b$.}
\]
\JErev{In practice, to show that $x$ is modular, one only needs to check that $a\vee (x\wedge b) \geq (a\vee x)\wedge b$, as the reverse inequality always holds (when $a\leq b$).}

\begin{prop}
\label{pro:srs}
If $x$ is a modular element in a lattice $L$,
then every chain in $L$ embeds into the direct product $x^\downarrow\times x^\uparrow$, and consequently
$\Ht(L)=\Ht(x^\downarrow)+\Ht(x^\uparrow)-1$.
\end{prop}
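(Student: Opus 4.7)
The plan is to construct an explicit order-embedding of each chain of $L$ into the product poset $x^{\downarrow}\times x^{\uparrow}$, and then deduce the height formula. The natural candidate is the map
\[
\phi : L \to x^{\downarrow}\times x^{\uparrow}, \qquad \phi(a) = (a\wedge x,\, a\vee x),
\]
which is clearly order-preserving on the whole of $L$. What is \emph{not} true in general is that $\phi$ is injective on~$L$; but I expect it to be injective on every chain, and this is where modularity is used.

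First I would fix a chain $C\subseteq L$ and take $a,b\in C$ with $a<b$. I want to rule out $\phi(a)=\phi(b)$, i.e.~the simultaneous equalities $a\wedge x = b\wedge x$ and $a\vee x = b\vee x$. Assuming these, the modularity of $x$ applied to the pair $a\leq b$ yields
\[
(a\vee x)\wedge b \;=\; a\vee(x\wedge b).
\]
Using the first equality on the right and $x\wedge a\leq a$ gives $a\vee(x\wedge b)=a\vee(x\wedge a)=a$, while the second equality on the left gives $(a\vee x)\wedge b=(b\vee x)\wedge b=b$. Hence $a=b$, a contradiction. Therefore $\phi\restr_C$ is an order-embedding of $C$ into $x^{\downarrow}\times x^{\uparrow}$, proving the first assertion.

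For the height formula, I would first recall the standard bound for heights of direct products of posets: in any chain of the product $P\times Q$, at each covering step at least one coordinate strictly increases, whence
\[
\Ht(P\times Q) \;\leq\; \Ht(P)+\Ht(Q)-1.
\]
Applying this with $P=x^{\downarrow}$ and $Q=x^{\uparrow}$, together with the chain embedding just established, yields $\Ht(L)\leq \Ht(x^{\downarrow})+\Ht(x^{\uparrow})-1$. The reverse inequality is precisely \eqref{eq:Lxx}, which holds without any modularity hypothesis, so the two bounds combine to give equality.

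The main (and essentially only) obstacle is the injectivity argument on chains; once modularity is invoked in the way above, everything else is routine order-theoretic bookkeeping. It is worth noting that the hypothesis $a\leq b$ in the definition of a modular element is used in a genuinely essential way here, since it is provided for free by the chain assumption on~$C$.
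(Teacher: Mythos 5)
Your proof is correct and follows essentially the same route as the paper: the same map $a\mapsto(a\wedge x,a\vee x)$, the same application of modularity to the pair $a\leq b$ in a chain, and the same product-height bookkeeping. The only cosmetic difference is that you establish injectivity on chains by contradiction, whereas the paper directly verifies that $f(a)\leq f(b)$ implies $a\leq b$; on a chain these are equivalent.
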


\begin{proof}
Let $C$ be a chain in $L$, and define a mapping
\[
f:C\rightarrow x^\downarrow\times x^\uparrow;\ c\mapsto (c\wedge x,c\vee x).
\]
We need to show that $a\leq b\iff f(a)\leq f(b)$ for all $a,b\in C$.
The direct implication is clear.
For the converse,
suppose $a,b\in C$ satisfy $f(a)\leq f(b)$, i.e.~$a\wedge x\leq  b\wedge x$ and $a\vee x\leq b\vee x$.
Since $C$ is a chain, we have $a\leq b$ or $b\leq a$. In the former case we are done.
If $b\leq a$, then
using modularity of $x$, we have
\[
a=(a\vee x)\wedge a\leq (b\vee x)\wedge a=b\vee (x\wedge a)\leq b\vee (x\wedge b)=b,
\]
as required.

Having established the first claim, it follows that $\Ht(L)\leq \Ht(x^\downarrow\times x^\uparrow)=\Ht(x^\downarrow)+\Ht(x^\uparrow)-1$.
Combined with \eqref{eq:Lxx}, this completes the proof.
\end{proof}

Proposition \ref{pro:srs} provides a first tool for calculating the height of a congruence lattice $\Cong(A)$; one hopes to identify a congruence $\si$ of $A$ that is modular in the congruence lattice, and then $\Ht(\Cong(A))$ can be expressed in terms of the heights of $\si^\da=[\De_A,\si]$ and $\si^\ua=[\si,\nab_A]$.
In many classical varieties of algebras, such as groups and rings,
every congruence is modular in the congruence lattice\NRrev{, e.g.~see \cite[Exercises I.3.5 and I.3.6]{BS1981}}.
This is not true for algebras in general, or for unary algebras or semigroups in particular.
\JErev{We now identify a special kind of subalgebra associated with a special kind of congruence; the congruences arising turn out to always be modular, and
will prove very useful in what follows.}

Specifically, if $B\leq A$ is such that the relation $\rho_B:=\nabla_B\cup\Delta_A$ is a congruence on $A$, we say that \JErev{$B$ is a \emph{Rees subalgebra}, and $\rho_B$ a \emph{Rees congruence}; the \emph{Rees quotient} associated with $B$ is $A/B:=A/\rho_B$}.
These concepts were introduced in \cite{Tichy1981}. The definition there was couched in different (but equivalent) terms, and our defining property was established as \cite[Proposition~1.2]{Tichy1981}.  

There is a link between Rees quotients and principal factors from the previous section, in the special case of ideals.
Specifically, $B\leq A$ is an \emph{ideal} if for each $n$-ary operation $f$ and all $a_1,\dots,a_n\in A$, at least one of which belongs to $B$, we have
$f(a_1,\dots,a_n)\in B$.
We remark in passing that this definition generalises the notion of ideals in semigroups, but not in rings.
Immediate from the definition is the following: 

\begin{lemma}\label{lem:A/B}
Every ideal $B\leq A$ is a Rees subalgebra, and 
\[
A/B\cong \begin{cases} A & \text{if } B=\es\\ \overline{A\smt B} & \text{if } B\neq \es.\end{cases}
\epfreseq
\]
\end{lemma}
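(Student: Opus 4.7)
The plan splits the statement into two parts. First I establish that for an ideal $B\leq A$ the relation $\rho_B=\nab_B\cup\De_A$ is a congruence (so that $B$ is a Rees subalgebra), and then exhibit the required isomorphism.

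For the congruence claim, $\rho_B$ is plainly an equivalence whose classes are $B$ (when $B\neq\es$) together with the singletons $\{a\}$ for $a\in A\smt B$. To check compatibility with an $n$-ary basic operation $f$, I take pairs $(a_i,b_i)\in\rho_B$ for $i=1,\dots,n$ and observe that for each $i$ either $a_i=b_i$ or $a_i,b_i\in B$. If the first alternative holds throughout, then $f(a_1,\dots,a_n)=f(b_1,\dots,b_n)$ and the pair lies in $\De_A$. Otherwise some $a_i$ lies in $B$, and the ideal property forces both $f(a_1,\dots,a_n)$ and $f(b_1,\dots,b_n)$ into $B$, so their pair lies in $\nab_B\sub\rho_B$.

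For the isomorphism, the case $B=\es$ is immediate since $\rho_\es=\De_A$, which yields $A/B\cong A$. For $B\neq\es$, I define $\phi:A/B\to\ol{A\smt B}$ by $\phi([B])=0$ and $\phi([a])=a$ for $a\in A\smt B$; the description of the $\rho_B$-classes makes this a bijection. To verify that $\phi$ is a homomorphism, I fix an $n$-ary basic operation $f$ and classes $[a_1],\dots,[a_n]$, choosing representatives with $a_i\in B$ whenever $[a_i]=[B]$. If every $a_i\in A\smt B$, I split according to whether $f(a_1,\dots,a_n)\in A\smt B$ or $f(a_1,\dots,a_n)\in B$, and match each case directly against the two clauses defining $\ol f$. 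If some $a_i\in B$, then the ideal condition gives $f(a_1,\dots,a_n)\in B$, so $\phi$ applied to the quotient value yields $0$; meanwhile $\phi([a_i])=0\notin A\smt B$ forces $\ol f$ to return $0$ on the other side.

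The verification as a whole is routine; if anything is mildly delicate, it is the case analysis on whether the inputs and output of $f$ lie inside $B$ or in $A\smt B$, which must be carried out with care to reconcile the quotient operation on $A/B$ with the piecewise definition of $\ol f$ on the principal factor $\ol{A\smt B}$.
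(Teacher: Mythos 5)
Your proof is correct and is exactly the routine verification the paper has in mind: the paper states this lemma as ``immediate from the definition'' and offers no written proof, and your argument simply fills in that verification (compatibility of $\rho_B$ via the ideal property, and the case analysis matching the quotient operation against the piecewise definition of $\ol f$ on $\ol{A\smt B}$) without any gaps.
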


\JEnew{The converse of Lemma \ref{lem:A/B} is not true.  For example, any one-element subalgebra is a Rees subalgebra (whose associated Rees congruence is the trivial relation), but need not be an ideal.  Concretely, $\{1\}$ is a Rees subalgebra of a group, but is only an ideal if the group is trivial.}


\begin{prop}
\label{pr:reesinter}
If $A$ is an algebra, and if $B\leq A$ is a Rees subalgebra, then the Rees congruence~$\rho_B$ is modular in the lattice $\Cong(A)$, and hence
\[
\Ht(\Cong(A))=\Ht([\Delta_A,\rho_B])+\Ht(\Cong(A/B))-1.
\]
\end{prop}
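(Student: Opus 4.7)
The plan is to apply Proposition~\ref{pro:srs} to the element $x = \rho_B$ in the lattice $\Cong(A)$. This reduces the task to two pieces: (i) verifying that $\rho_B$ is modular, and (ii) identifying the interval $\rho_B^\uparrow = [\rho_B, \nabla_A]$ with $\Cong(A/B)$ as lattices, which then gives $\Ht(\rho_B^\uparrow) = \Ht(\Cong(A/B))$. Part (ii) is an immediate consequence of the standard Correspondence Theorem for universal algebras, which provides a lattice isomorphism between the congruences of $A$ containing $\rho_B$ and $\Cong(A/\rho_B) = \Cong(A/B)$. The substantive step is the modularity of $\rho_B$.

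For modularity, fix $\alpha, \beta \in \Cong(A)$ with $\alpha \leq \beta$. By the remark preceding the proposition, only the inequality $(\alpha \vee \rho_B) \wedge \beta \leq \alpha \vee (\rho_B \wedge \beta)$ needs proof. The key structural ingredient will be a \emph{normal form} for $\alpha \vee \rho_B$: I claim that $(x, y) \in \alpha \vee \rho_B$ if and only if either $(x, y) \in \alpha$, or there exist $x', y' \in B$ with $(x, x'), (y, y') \in \alpha$. The intuition is that since $\rho_B = \nabla_B \cup \Delta_A$ collapses all of $B$ into one equivalence class, any zigzag of $\alpha$- and $\rho_B$-steps that enters $B$ can be contracted to a single $\rho_B$-step joining the first entry into $B$ to the last.

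To establish the normal form, I would start with an arbitrary finite path $x = z_0, z_1, \ldots, z_n = y$ realising $(x, y) \in \alpha \vee \rho_B$, merge consecutive same-type steps (using that both $\alpha$ and $\rho_B$ are transitive equivalences), and discard trivial $\rho_B$-steps (equalities $z_i = z_{i+1}$). After this tidying, every remaining $\rho_B$-step has both endpoints in $B$. If no $\rho_B$-steps remain, $(x, y) \in \alpha$; otherwise, letting $z_i$ be the first and $z_j$ the last element of the path that lies in $B$, every step to the left of $z_i$ and to the right of $z_j$ must be an $\alpha$-step (a nontrivial $\rho_B$-step would require an endpoint in $B$), so $(x, z_i), (z_j, y) \in \alpha$, while $(z_i, z_j) \in \rho_B$ because $z_i, z_j \in B$.

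Armed with the normal form, take $(x, y) \in (\alpha \vee \rho_B) \wedge \beta$. If $(x, y) \in \alpha$, there is nothing to prove. Otherwise choose $x', y' \in B$ with $(x, x'), (y, y') \in \alpha \subseteq \beta$; combining these with $(x, y) \in \beta$ and the transitivity of $\beta$ yields $(x', y') \in \beta$, and since also $(x', y') \in \rho_B$, we have $(x', y') \in \rho_B \wedge \beta$. The path $x \sim_\alpha x' \sim_{\rho_B \wedge \beta} y' \sim_\alpha y$ then witnesses $(x, y) \in \alpha \vee (\rho_B \wedge \beta)$, completing the proof of modularity, and hence of the proposition. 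The main obstacle is the path-normalisation step; it is combinatorially delicate but ultimately hinges on the single decisive fact that $\rho_B$ has at most one nontrivial equivalence class, namely $B$ itself.
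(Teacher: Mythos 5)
Your proposal is correct and follows essentially the same route as the paper: both reduce to Proposition~\ref{pro:srs} plus the Correspondence Theorem, and both prove modularity by contracting an $\alpha$/$\rho_B$-zigzag to a single $\rho_B$-step with both endpoints in $B$ (your ``normal form'' is exactly what the paper's minimal-length argument produces), then transferring that step into $\rho_B\wedge\beta$ via $\alpha\subseteq\beta$ and transitivity.
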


\begin{proof}
The second assertion follows from the first and Proposition \ref{pro:srs}, upon noticing that $[\rho_B,\nabla_A]\cong \Cong(A/B)$
by the Correspondence Theorem (see \cite[Theorem 6.20]{BS1981} for instance).

To prove the first assertion, suppose $\sigma_1,\sigma_2\in\Cong(A)$ satisfy $\sigma_1\subseteq\sigma_2$.
\JEnew{As noted just before Proposition \ref{pro:srs}, we just need to show that $(\sigma_1\vee\rho_B)\cap\sigma_2 \sub \sigma_1\vee(\rho_B\cap \sigma_2)$.
%
To do so, suppose $(a,b)\in(\sigma_1\vee\rho_B)\cap\sigma_2$.}
Then $(a,b)\in\sigma_2$, and there exist $k\geq 1$ and $c_1,d_1,\dots,c_k,d_k\in A$ such that
\[
a  \mr\si_1 c_1 \mr\rho_B d_1 \mr\si_1 c_2 \mr\rho_B d_2 \mr\si_1\dots\mr\si_1 c_k \mr\rho_B d_k\mr\si_1 b.
\]
Assume that $k(\geq1)$ is as small as possible with respect to the existence of such a sequence.
We claim that $k=1$. To prove this, suppose instead that $k\geq 2$. Minimality of $k$ implies that $c_i\neq d_i$ for all $i=1,\dots,k$.
Hence, by the definition of $\rho_B$, we must have $c_i,d_i\in B$ for all $i$.
But then $a \mr\si_1  c_1 \mr\rho_B d_k\mr\si_1 b$, which is a sequence of the same form and of length $1$.
This contradiction confirms that $k=1$. So now
\begin{equation}
\label{eq:acdb}
a\mr\si_1 c_1\mr\rho_B d_1\mr\si_1  b.
\end{equation}
But since $\sigma_1\subseteq \sigma_2$, it follows that
$c_1 \mr\si_2 a \mr\si_2 b\mr\si_2 d_1$, and hence $(c_1,d_1)\in \rho_B\cap\sigma_2$.
The sequence~\eqref{eq:acdb} now implies $(a,b)\in \sigma_1\vee(\rho_B\cap\sigma_2)$, completing the proof.
\end{proof}

In what follows we establish  a condition under which the term
$\Ht([\Delta_A,\rho_B])$ in Proposition~\ref{pr:reesinter} can be replaced by $\Ht(\Cong(B))$.
The next result shows that $\Ht([\Delta_A,\rho_B])\leq\Ht(\Cong(B))$ always holds.

\begin{lemma}
\label{la:htBr}
For  a Rees subalgebra $B\leq A$,
the mapping
\[
[\Delta_A,\rho_B]\rightarrow \Cong(B);\ \sigma\mapsto\sigma\restr_B
\]
 is an embedding of lattices.
\end{lemma}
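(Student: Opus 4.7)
The plan is to verify, in turn, that the stated map is well-defined, preserves meets, preserves joins, and is injective; together these assert that it is an embedding of lattices. Well-definedness is standard: the restriction of a congruence on $A$ to any subalgebra $B$ is a congruence on $B$. Preservation of meets is immediate, since $(\si_1\cap\si_2)\restr_B = \si_1\restr_B\cap\si_2\restr_B$ holds at the level of binary relations.

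The step deserving real attention is preservation of joins. In both $\Cong(A)$ and $\Cong(B)$ the join of two congruences is the transitive closure of the union (the transitive closure of the union of two reflexive compatible relations is automatically compatible). Given $\si_1,\si_2\in[\De_A,\rho_B]$, the inclusion $\si_1\restr_B \vee \si_2\restr_B \sub (\si_1\vee \si_2)\restr_B$ is immediate from monotonicity. For the reverse inclusion I would take $(a,b)\in (\si_1\vee \si_2)\restr_B$ with $a\neq b$, and fix a chain $a=c_0,c_1,\ldots,c_n=b$ whose consecutive pairs lie in $\si_1\cup \si_2$. Because $\si_1\cup \si_2\sub\rho_B=\nab_B\cup\De_A$, every non-trivial step $(c_i,c_{i+1})$ must have both endpoints in $B$. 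Deleting trivial steps, and noting $a,b\in B$, yields a chain lying entirely inside $B$ whose consecutive pairs belong to $\si_1\restr_B \cup \si_2\restr_B$, so $(a,b)\in \si_1\restr_B \vee \si_2\restr_B$.

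Injectivity follows directly from the Rees condition, and an order-embedding argument can be extracted from it as well. Suppose $\si_1,\si_2\in[\De_A,\rho_B]$ satisfy $\si_1\restr_B=\si_2\restr_B$, and let $(a,b)\in\si_1$ be non-trivial. Then $(a,b)\in \si_1\sub\rho_B$ forces $a,b\in B$, so $(a,b)\in\si_1\restr_B=\si_2\restr_B\sub\si_2$; by symmetry $\si_2\sub\si_1$.

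The only genuine obstacle is the join-preservation step, and as sketched it is resolved by exploiting the defining property $\si\sub\rho_B$ of elements of $[\De_A,\rho_B]$ to confine every non-trivial adjacency in a $\si_1\cup\si_2$-chain to $B\times B$. The remaining verifications are routine bookkeeping.
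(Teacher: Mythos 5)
Your proof is correct and rests on the same key observation as the paper's, namely that $\sigma\subseteq\rho_B=\nabla_B\cup\Delta_A$ confines every non-diagonal pair to $B\times B$, so that $\sigma=\sigma\restr_B\cup\Delta_{A\setminus B}$. The paper's own proof records only this identity and the resulting order-reflection, leaving the meet/join preservation implicit; your explicit chain argument for joins is a faithful filling-in of that step rather than a different route.
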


\begin{proof}
For any $\sigma\in [\Delta_A,\rho_B]$ we have $\sigma=\sigma\restr_B\cup \Delta_{A\setminus B}$.
Hence, for $\sigma_1,\sigma_2\in [\Delta_A,\rho_B]$, we have 
$\sigma_1\subseteq\sigma_2\iff \sigma_1\restr_B\subseteq\sigma_2\restr_B$,
and the lemma follows.
\end{proof}

As usual, we say a subalgebra $B\leq A$ has the \emph{congruence extension property}, \JEnew{or that $B$ is a \emph{CEP subalgebra}}, if 
for every $\sigma\in\Cong(B)$ there exists $\sigma'\in\Cong(A)$ extending it, i.e.~$\sigma=\sigma'\restr_{B}$.
We say that~$B$ has the \emph{diagonal congruence extension property}, \JEnew{or that $B$ is a \emph{$\De$-CEP subalgebra}}, if  
for every $\sigma\in\Cong(B)$ we have $\sigma\cup\Delta_A\in\Cong(A)$.
It is easy to show that $B$ is a $\De$-CEP subalgebra if and only if it is \JEnew{both a CEP and Rees subalgebra}.

\begin{prop}
\label{pro:addeq}
Let $A$ be an algebra.
\begin{thmenumerate}
\item
\label{it:addeq1}
If $B $ is  a $\Delta$-CEP subalgebra of $A$, then $\Cong(B)\cong [\Delta_A,\rho_B]$, and hence
\[
\Ht(\Cong(A))=\Ht(\Cong(B))+\Ht(\Cong(A/B))-1.
\]
\item
\label{it:addeq2}
If $B_0\leq B_1\leq\dots\leq B_k=A$ is \NRrev{an ascending} chain of $\Delta$-CEP subalgebras of $A$, then
\[
\Ht(\Cong(A))=\Ht(\Cong(B_0))+\sum_{r=1}^{k} \Ht(\Cong(B_{r}/ B_{r-1}))-k.
\]
\end{thmenumerate}
\end{prop}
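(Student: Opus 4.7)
The plan is to strengthen Lemma \ref{la:htBr} to a lattice isomorphism under the $\Delta$-CEP hypothesis, thereby deducing part (i) directly from Proposition \ref{pr:reesinter}, and then to obtain part (ii) by a straightforward induction on $k$.

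For part (i), I would exhibit an inverse to the embedding $\sigma\mapsto\sigma\restr_B$ of Lemma \ref{la:htBr} by setting
\[
\iota\colon\Cong(B)\to[\Delta_A,\rho_B], \quad \sigma\mapsto\sigma\cup\Delta_A.
\]
The $\Delta$-CEP assumption is precisely what ensures that $\iota(\sigma)\in\Cong(A)$; the inclusions $\Delta_A\subseteq\iota(\sigma)\subseteq\nabla_B\cup\Delta_A=\rho_B$ are immediate. For any $\sigma\in\Cong(B)$ the equality $\iota(\sigma)\restr_B=\sigma$ follows from $\sigma\supseteq\Delta_B$; conversely, any $\tau\in[\Delta_A,\rho_B]$ has all of its off-diagonal pairs in $\nabla_B\subseteq B\times B$, so $\tau=(\tau\restr_B)\cup\Delta_A=\iota(\tau\restr_B)$. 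Both maps are order-preserving, and, being mutually inverse, they form the required lattice isomorphism $\Cong(B)\cong[\Delta_A,\rho_B]$. Substituting this into the height formula supplied by Proposition \ref{pr:reesinter} immediately yields part (i).

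For part (ii), I would induct on $k$, with $k=1$ being part (i). The subtle point is that in order to apply the inductive hypothesis to the truncated chain $B_0\leq\cdots\leq B_{k-1}$, I need each $B_r$ with $r<k-1$ to be a $\Delta$-CEP subalgebra of $B_{k-1}$, not merely of $A$. This hereditary property is easy to verify: if $B'\leq B\leq A$ and $B'$ is $\Delta$-CEP in $A$, then for any $\nu\in\Cong(B')$ the relation $\nu\cup\Delta_A$ is a congruence on $A$, and restricting to $B$ gives the congruence $\nu\cup\Delta_B$ on $B$, so $B'$ is $\Delta$-CEP in $B$. Applying part (i) to $B_{k-1}\leq B_k=A$ yields
\[
\Ht(\Cong(A))=\Ht(\Cong(B_{k-1}))+\Ht(\Cong(A/B_{k-1}))-1,
\]
and combining this with the inductive expression for $\Ht(\Cong(B_{k-1}))$ gives the claimed formula after a telescoping simplification. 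The only real obstacle in the argument is the hereditary property of the $\Delta$-CEP condition just noted; everything else is a direct combination of Lemma \ref{la:htBr} and Proposition \ref{pr:reesinter}.
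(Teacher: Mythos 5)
Your proposal is correct and follows essentially the same route as the paper: part (i) is obtained by upgrading the embedding of Lemma \ref{la:htBr} to an isomorphism via the inverse $\sigma\mapsto\sigma\cup\Delta_A$ (the paper phrases this as surjectivity, using the same identity $(\sigma\cup\Delta_A)\restr_B=\sigma$), and part (ii) by induction combined with Proposition \ref{pr:reesinter}. Your explicit verification that the $\Delta$-CEP property is inherited by intermediate subalgebras is a detail the paper merely asserts as an observation, and it is argued correctly.
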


\begin{proof}
\firstpfitem{\ref{it:addeq1}}
Given Proposition \ref{pr:reesinter} and Lemma \ref{la:htBr}, it suffices to show
that the mapping given in the latter is surjective.
This in turn follows from the fact that $(\sigma\cup\Delta_A)\restr_B=\sigma$ for all $\sigma\in\Cong(B)$.

\pfitem{\ref{it:addeq2}}
This follows by induction on $k$.
The case $k=0$ is vacuous.
The inductive step relies on part~\ref{it:addeq1} and the observation that $B_0\leq\dots\leq B_{k-1}$ is a chain of $\Delta$-CEP subalgebras of $B_{k-1}$.
\end{proof}

Analogous statements to Proposition \ref{pro:addeq}\ref{it:addeq1} were given in \cite[Lemma 2.1]{CST1989} for subgroup lattices of groups, 
and in \cite[Proposition 3.1]{CGMP2017} for subsemigroup lattices of semigroups:
\bit
\item $\Ht(\Sub(G)) = \Ht(\Sub(N)) + \Ht(\Sub(G/N)) - 1$ for any normal subgroup $N$ of a group $G$.
\item $\Ht(\Sub(S)) = \Ht(\Sub(I)) + \Ht(\Sub(S/I)) - 1$ for any ideal $I$ of a semigroup $S$.
\eit
We mention in passing that using our terminology, the above statements can be proved by showing that $N$ is modular in $\Sub(G)$, and $I$ is modular in $\Sub(S)$.
The former is well known and easy to prove. The latter is equally easy to see.
Indeed, if $I$ is an ideal of a semigroup $S$ and if $U\leq V\leq S$ are subsemigroups, then $U\vee I= U\cup I$ \JEnew{(the join is in $\Sub(S)$)}, so that
\[
(U\vee I)\cap V=(U\cup I)\cap V=(U\cap V)\cup (I\cap V)=U\cup(I\cap V)=U\vee (I\cap V),
\]
where the last equality holds because $U\cup(I\cap V)\leq S$.

\section{Unary algebras: a height formula}\label{sect:unary}

We now put the developments from the previous two sections into the context of unary algebras, i.e.~algebras in which all basic operations have arity $1$.  Our main objective here is to obtain a formula that expresses the height of such an algebra in terms of the heights of the principal factors corresponding to its $\G$-classes, in the case that there are finitely many such classes.  This will be achieved in Theorem \ref{thm:G}, which will form the basis for several subsequent results on semigroups in Sections \ref{sect:left1}--\ref{sect:two}.
 
 We begin by showing that Proposition \ref{pro:addeq} always applies in unary algebras.

\begin{lemma}\label{lem:rhoB}
Any subalgebra of a unary algebra $A$ is \JErev{both an ideal and a $\Delta$-CEP subalgebra (and hence a Rees subalgebra)}.
\end{lemma}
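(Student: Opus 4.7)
The plan is to unpack the relevant definitions in the unary setting and observe that most of the complications disappear because every basic operation has only a single argument.

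For the ideal claim, the definition demands, for each $n$-ary basic operation $f$ and each tuple $(a_1,\dots,a_n)\in A^n$ with at least one entry in $B$, that $f(a_1,\dots,a_n)\in B$. With $n=1$ this reduces verbatim to the statement that $B$ is closed under each unary operation of $A$, which is exactly what it means for $B$ to be a subalgebra. So the ideal claim requires no separate work.

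For the $\De$-CEP claim, I would fix an arbitrary $\si\in\Cong(B)$, set $\tau:=\si\cup\De_A$, note that $\tau$ is plainly an equivalence on $A$, and then verify compatibility with each unary basic operation~$f$. A pair $(a,b)\in\tau$ is either in $\De_A$, giving $(f(a),f(b))\in\De_A\sub\tau$ directly, or in $\si$, in which case $a,b\in B$, so $f(a),f(b)\in B$ since $B$ is a subalgebra, and compatibility of $\si$ within $B$ yields $(f(a),f(b))\in\si\sub\tau$.

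The ``hence a Rees subalgebra'' clause then requires no further argument: as noted immediately before Proposition~\ref{pro:addeq}, any $\De$-CEP subalgebra is automatically a Rees subalgebra (applying the $\De$-CEP property to the universal congruence $\nab_B\in\Cong(B)$ produces $\rho_B=\nab_B\cup\De_A\in\Cong(A)$). I do not anticipate any real obstacle in any step; the result is essentially a direct verification, and its value is to make the Rees machinery of Section~\ref{sect:Rees}, notably Proposition~\ref{pro:addeq}, freely available for all unary algebras in the sections that follow.
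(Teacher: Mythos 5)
Your proposal is correct and follows essentially the same route as the paper: the ideal claim is the observation that for unary operations the ideal condition coincides verbatim with closure under the operations, and the $\De$-CEP claim is verified by the same two-case check of compatibility of $\si\cup\De_A$. Your explicit justification of the ``hence a Rees subalgebra'' clause (applying the $\De$-CEP property to $\nab_B$) is a slightly more concrete version of what the paper leaves to the remark preceding Proposition~\ref{pro:addeq}, and is perfectly fine.
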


\begin{proof}
For the first assertion, observe that the definition of a subalgebra and an ideal coincide in the unary case.

For the second assertion, let $B\leq A$ and $\sigma\in\Cong(B)$.
It is clear that $\sigma^A:=\sigma\cup\Delta_A$ is an equivalence relation.
To show compatibility, let $(a,b)\in \sigma^A$, and let $f$ be a basic (unary) operation.
If $(a,b)\in \sigma$ then $(f(a),f(b))\in\sigma\subseteq\sigma^A$, because $\sigma$ is a congruence on~$B$.
Otherwise, $a=b$, hence $f(a)=f(b)$, and so $(f(a),f(b))\in\Delta_A\subseteq \sigma^A$.
%
\end{proof}


We now bring in the $\G$-relations of a unary algebra $A$.
In this case, for $a,b\in A$ we have $a\leq_\G b$ if and only if $a=f(b)$ for some \JEnew{term operation $f$} of $A$.
The $\G$-classes of~$A$ are then the strongly connected components of the directed graph associated with $A$, where $a\rightarrow b$ if and only if $f(a)=b$ for some basic operation $f$.
The proof of our main result for this section will be by induction on the number of $\G$-classes. To make it work, we need some more information concerning maximal $\G$-classes in the $\leq$ order from \eqref{eq:G}.

\begin{lemma}\label{lem:G}
Let $G$ be a maximal $\G$-class of a unary algebra $A$, and let $B=A\smt G$.  
\ben
\item \label{lG1} $B$ is a subalgebra of $A$.
\item \label{lG2} If $B\not=\es$, then $A/B \cong \ol G$.
\een
\end{lemma}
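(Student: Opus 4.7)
The plan is to handle the two parts separately, with part (i) being the substantive one and part (ii) following essentially immediately from results already established.

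For part (i), I would show $B=A\smt G$ is closed under every basic (unary) operation $f$. Fix $b\in B$ and consider $f(b)$. Since $f$ is a term operation, we have $f(b)\in\la b\ra$, and hence $f(b)\leqG b$, i.e.\ $G_{f(b)}\leq G_b$ in $A/\G$. Suppose for contradiction that $f(b)\in G$, so that $G_{f(b)}=G$. Then $G\leq G_b$. Since $G$ is maximal in the poset $A/\G$ and $b\notin G$ (equivalently $G_b\neq G$), this yields $G_b>G$, contradicting maximality. Hence $f(b)\in B$, as required. This uses only the definition of $\leqG$ from \eqref{eq:G} and the maximality hypothesis.

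For part (ii), I would invoke Lemma \ref{lem:rhoB}, which says that any subalgebra of a unary algebra is an ideal; applied to $B$ from part (i) this gives that $B$ is an ideal of $A$. Then Lemma \ref{lem:A/B}, in the case $B\neq\es$, gives $A/B\cong\ol{A\smt B}=\ol G$, which is the desired isomorphism of algebras.

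The main (and essentially only) obstacle is the maximality argument in part (i); once it is set up correctly, part (ii) is a direct appeal to the earlier lemmas and requires no further work. I don't foresee any hidden subtleties: no constants are involved, the operations are unary so compatibility and principal-factor structure are clean, and both claims reduce to facts already in place.
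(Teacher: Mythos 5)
Your proof is correct and follows essentially the same route as the paper: part (i) is the same maximality argument (from $f(b)\in\la b\ra$ one gets $G\leq G_b$, which forces $G=G_b$ and hence the contradiction $b\in G$), and part (ii) is the same appeal to Lemma \ref{lem:rhoB} followed by Lemma \ref{lem:A/B}.
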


\pf
\firstpfitem{\ref{lG1}}  Let $a\in B$, and let $f$ be an operation of $A$.  We must show that $f(a)\in B$, i.e.~that $f(a)\not\in G$.  To do so, suppose to the contrary that $f(a)\in G$.  Then $G = G_{f(a)} \leq G_a$ (as $f(a)\in\la a\ra$), so it follows from maximality of $G$ that $G=G_a$.  But this implies $a\in G$, contradicting $a\in B=A\smt G$.

\pfitem{\ref{lG2}}  \JEnew{By part \ref{lG1} and Lemma \ref{lem:rhoB}, $B$ is an ideal of $A$.  The claim now follows from Lemma \ref{lem:A/B}.}
\epf

\begin{cor}\label{cor:G}
If $G$ is a maximal $\G$-class of a unary algebra $A$, and $B=A\smt G$, then
\[
\Ht(\Cong(A/B))=\begin{cases} \Ht(\Cong(\overline{G})) & \text{if } G\neq A\\  \Ht(\Cong(\overline{G}))-1 & \text{if } G= A.\end{cases}
\]
\end{cor}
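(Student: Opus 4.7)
The plan is to split into cases depending on whether $G = A$ or $G \neq A$, compute $A/B$ explicitly in each case via Lemmas \ref{lem:A/B} and \ref{lem:G}, and then translate the result into a statement about the principal factor $\ol G$.

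If $G \neq A$, then $B = A \smt G$ is non-empty, and Lemma \ref{lem:G}\ref{lG2} directly supplies the isomorphism $A/B \cong \ol G$. Passing to congruence lattices yields $\Ht(\Cong(A/B)) = \Ht(\Cong(\ol G))$, which is the first branch of the formula. If $G = A$, then $B = \es$, and Lemma \ref{lem:A/B} gives $A/B \cong A = G$ — a $\G$-class rather than its principal factor. To recover $\ol G$ in this case I would invoke Lemma \ref{lem:GS}, which gives $\Ht(\Cong(\ol G)) = \Ht(\Cong(G)) + 1$; rearranging this produces the second branch.

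There is no real difficulty here: the corollary is essentially a bookkeeping consequence of the two earlier lemmas. The only small point worth verifying is the hypothesis of Lemma \ref{lem:GS}, which requires that $A$ has no constants and that $G$ is a subalgebra of $A$. Both conditions are automatic in the present setting, since unary algebras by definition have no nullary operations, and when $G = A$ the $\G$-class $G$ is trivially a subalgebra of $A$.
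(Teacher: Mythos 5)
Your proposal is correct and follows essentially the same route as the paper, which likewise derives the corollary from Lemma \ref{lem:G}\ref{lG2}, Lemma \ref{lem:GS} and the observation that $A/\es\cong A$. Your explicit check that Lemma \ref{lem:GS} applies (no constants in a unary signature, and $G=A$ is a subalgebra) is a sensible addition but does not change the argument.
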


\pf
This follows immediately from Lemmas \ref{lem:G}\ref{lG2} and \ref{lem:GS} (and $A/\es\cong A$).
\epf

We are now ready for the main result of this section.

\begin{thm}\label{thm:G}
If $A$ is a unary algebra with finitely many $\G$-classes, $G_1,\ldots,G_k$, then
\[
\Ht(\Cong(A)) = \sum_{r=1}^k \Ht(\Cong(\ol G_r)) - k.
\]
\end{thm}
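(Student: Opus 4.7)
The plan is to prove the formula by induction on $k$, the number of $\G$-classes, by peeling off one maximal $\G$-class at a time and invoking Proposition \ref{pro:addeq}\ref{it:addeq1}. The crucial preliminary observation is that Lemma \ref{lem:rhoB} guarantees that \emph{every} subalgebra of a unary algebra is $\Delta$-CEP, so Proposition \ref{pro:addeq} applies without any extra hypothesis; this is what makes the unary setting particularly amenable.

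For the base case $k=1$, the algebra $A$ itself is a single $\G$-class, so taking $G=A$ and $B=\es$ in Corollary \ref{cor:G} yields $\Ht(\Cong(A))=\Ht(\Cong(A/\es))=\Ht(\Cong(\ol G))-1$, which matches the right-hand side. For the inductive step with $k\geq 2$, choose a maximal $\G$-class, say $G_k$, and set $B=A\smt G_k$. Lemma \ref{lem:G}\ref{lG1} says $B$ is a subalgebra of $A$, and by Lemma \ref{lem:rhoB} it is a $\Delta$-CEP subalgebra, so Proposition \ref{pro:addeq}\ref{it:addeq1} gives
\[
\Ht(\Cong(A))=\Ht(\Cong(B))+\Ht(\Cong(A/B))-1.
\]
Since $B\neq\es$ (as $k\geq 2$), Corollary \ref{cor:G} reduces the middle term to $\Ht(\Cong(\ol{G_k}))$.

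It remains to apply the inductive hypothesis to $B$. Here Lemma \ref{lem:GAB} is essential: since $\G^B=\G^A\restr_B$, the $\G$-classes of $B$ are exactly $G_1,\ldots,G_{k-1}$, so there are $k-1$ of them, and for each $r<k$ the principal factor $\ol{G_r}$ computed inside $B$ coincides with that computed inside $A$ (both depend only on the restriction of the basic operations to $G_r$, which is the same in either ambient algebra). Induction then yields $\Ht(\Cong(B))=\sum_{r=1}^{k-1}\Ht(\Cong(\ol{G_r}))-(k-1)$, and adding $\Ht(\Cong(\ol{G_k}))-1$ produces exactly the claimed formula.

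The only subtle point, and what I would flag as the main thing requiring care, is the verification that a maximal $\G$-class actually exists and that removing it preserves the principal factors of the remaining classes — i.e.\ the interplay between Lemmas \ref{lem:G}\ref{lG1} and \ref{lem:GAB}. Everything else is routine bookkeeping once the modularity/CEP machinery of Section \ref{sect:Rees} is in place.
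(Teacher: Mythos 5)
Your proof is correct and follows essentially the same route as the paper: the paper orders the classes so that $B_r=G_1\cup\dots\cup G_r$ forms an ascending chain of $\Delta$-CEP subalgebras and applies Proposition \ref{pro:addeq}\ref{it:addeq2} together with Corollary \ref{cor:G}, which is exactly your induction (peeling off a maximal $\G$-class via Lemma \ref{lem:G} and Proposition \ref{pro:addeq}\ref{it:addeq1}) packaged into a single step, since part \ref{it:addeq2} is itself proved by induction from part \ref{it:addeq1}. Your explicit attention to Lemma \ref{lem:GAB} and the invariance of the principal factors under passing to $B$ matches the paper's parenthetical ``keeping Lemma \ref{lem:GAB} in mind.''
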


\pf
Without loss of generality, assume that for every $r\in\{1,\dots,k\}$, the class $G_r$ is minimal in $\{G_r,\dots, G_k\}$.
This means that for  every $x\in G_r$, and every operation $f$ of $A$, we have ${f(x)\in G_1\cup\dots\cup G_r}$.
It follows (keeping Lemma \ref{lem:GAB} in mind) that for every $r=1,\dots,k$ the
set $B_r:=G_1\cup\dots\cup G_r$ is a subalgebra of $A$ with a maximal $\G$-class $G_r$.
We therefore have a chain of ($\De$-CEP) subalgebras ${\es=B_0<B_1<\dots<B_k=A}$ (cf.~Lemma \ref{lem:rhoB}).  Combining 
Proposition~\ref{pro:addeq}\ref{it:addeq2} and Corollary \ref{cor:G} we obtain
\begin{align*}
\Ht(\Cong(A)) &=
\Ht(\Cong(B_0))+\sum_{r=1}^{k} \Ht(\Cong(B_{r}/ B_{r-1}))-k\\
& =
1 + \Ht(\Cong(\overline{G}_1))-1+\sum_{r=2}^k \Ht(\Cong(\overline{G}_r)) -k
=\sum_{r=1}^k \Ht(\Cong(\overline{G}_r))-k.  
\end{align*}
\JEnew{(Note here that the unique congruence on the empty algebra $B_0=\es$ is the empty relation, so that $\Cong(B_0) = \{\es\}$ has height $1$.)}
\epf

\section{Preliminaries on semigroups}\label{sect:prelim}

In this section we rapidly review the concepts and basic facts from the theory of semigroups that will be used in the remainder of the paper. For more background see \cite{Howie1995,CPbook}.

\subsection{Basics: left, right and two-sided congruences}

A \emph{semigroup}  is an algebra with a single associative binary operation, usually denoted by juxtaposition. 
 A \emph{monoid} is a semigroup with identity.  As usual, we write $S^1$ for the \emph{monoid completion} of a semigroup $S$; so $S^1=S$ if $S$ is a monoid, and otherwise $S^1=S\cup\{1\}$ where $1$ is a symbol not belonging to $S$, acting as an adjoined identity.

An equivalence relation $\sigma$ on a semigroup $S$ is said to be a \emph{left congruence} if it is compatible with left multiplication:
\[
(x,y)\in\si \implies (ax,ay)\in\si \qquad\text{for all $a,x,y\in S$.}
\]
\emph{Right congruences} are defined dually.
It is an easy fact that $\sigma$ is a congruence (as defined in Section \ref{sect:algebras}) if and only if it is both a left and a right congruence.
The sets 
\[
\LCong(S),\quad \RCong(S)\quad \text{and}\quad \Cong(S)
\]
of all left, right and two-sided congruences are all lattices under inclusion.  Each is a sublattice of the lattice \JEnew{$\Eq(S)$ of all equivalence relations on $S$}.

The left congruences of a \emph{group} $G$ are precisely the equivalence relations
whose equivalence classes are the left cosets of a subgroup, i.e.~relations of the form
\[
\lam_H = \set{(x,y)\in G\times G}{x^{-1}y\in H} \qquad \text{for }H\leq G.
\]
Analogous statements hold for the right congruences, using right cosets.
The two-sided congruences of $G$ are precisely the relations $\lam_N$ for any \emph{normal} subgroup $N\normal G$.  The next result quickly follows, with $\Sub(G)$ and $\NSub(G)$ denoting the lattices of subgroups and normal subgroups of~$G$, respectively.

\begin{lemma}\label{lem:CongG}
If $G$ is a group, then
\[
\LCong(G)\cong\RCong(G)\cong\Sub(G) \AND \Cong(G)\cong\NSub(G).  \epfreseq
\]
\end{lemma}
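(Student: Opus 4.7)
The plan is to exhibit an explicit lattice isomorphism $\Sub(G)\to \LCong(G)$ via $H\mapsto \lam_H$, and then to specialise the argument to obtain the right- and two-sided versions. Throughout, the calculations are routine group-theoretic manipulations; the real content is merely recording that every left congruence is recoverable from its $1$-class.

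First, for any $H\leq G$, I would verify directly that $\lam_H$ is a left congruence. Reflexivity, symmetry and transitivity follow respectively from $1\in H$, $H=H^{-1}$ and closure of $H$ under multiplication; left compatibility holds because $(ax)^{-1}(ay)=x^{-1}y$ for all $a,x,y\in G$. Next, to invert the map, take any $\si\in\LCong(G)$ and set $H_\si:=\set{h\in G}{h\mr\si 1}$. I would check that $H_\si\leq G$: closure under products uses $h\mr\si 1$ together with left compatibility by $k$ to give $kh\mr\si k\mr\si 1$; closure under inverses uses $h\mr\si 1$ and left compatibility by $h^{-1}$ to give $1\mr\si h^{-1}$. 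I would then verify that $\si=\lam_{H_\si}$: for any $x,y\in G$, left-multiplying by $x^{-1}$ (resp.~$x$) shows that $(x,y)\in\si\iff (1,x^{-1}y)\in\si \iff x^{-1}y\in H_\si$. Together with the obvious identity $H_{\lam_H}=H$, this shows $H\mapsto\lam_H$ is a bijection.

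To upgrade to a lattice isomorphism, I would simply observe that both $H\mapsto \lam_H$ and $\si\mapsto H_\si$ are inclusion-preserving, so they constitute mutually inverse order isomorphisms between $\Sub(G)$ and $\LCong(G)$, which is enough to conclude a lattice isomorphism. The case $\RCong(G)\cong\Sub(G)$ is then handled by the obvious left-right dual argument using $\rho_H:=\set{(x,y)}{xy^{-1}\in H}$.

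For the final statement, I would exploit that $\Cong(G)=\LCong(G)\cap \RCong(G)$ within $\Eq(G)$, and determine which $\lam_H$ are also right congruences. A direct computation shows that $\lam_H$ is a right congruence if and only if $a^{-1}ha\in H$ for all $a\in G$ and $h\in H$, i.e.~$H\normal G$. The restriction of $H\mapsto\lam_H$ to $\NSub(G)$ therefore gives a lattice isomorphism $\NSub(G)\cong\Cong(G)$. The only mild subtlety -- and the closest thing to an obstacle -- is making sure one verifies both that every $\lam_H$ with $H\normal G$ is two-sided and that no other $\lam_H$ is; both follow from the displayed computation above.
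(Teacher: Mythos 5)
Your proposal is correct and follows exactly the classical correspondence the paper invokes: the paper states this lemma without proof (marking it as following "quickly" from the fact that left congruences of a group are the coset partitions $\lam_H$ for $H\leq G$, with normal subgroups giving the two-sided ones), and your write-up simply supplies the standard details of that correspondence. Nothing is missing; the verification that the mutually inverse maps $H\mapsto\lam_H$ and $\si\mapsto H_\si$ are both inclusion-preserving is indeed all that is needed to upgrade the bijection to a lattice isomorphism.
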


\subsection{Green's relations}

\emph{Green's preorders and equivalences} \cite{Green1951} on a semigroup $S$ are defined as follows.  For $x,y\in S$, we have
\begin{align*}
x \leqL y &\iff S^1x\sub S^1y  &  \quad x \leqR y &\iff xS^1\sub yS^1  & \quad x \leqJ y &\iff S^1xS^1 \sub S^1yS^1 \\
&\iff x\in S^1y, & &\iff x\in yS^1 , &  &\iff x\in S^1yS^1 .
\end{align*}
These preorders induce the equivalences
\[
\L = {\leqL}\cap{\geq_\L} \COMMA \R = {\leqR}\cap{\geq_\R} \AND \J = {\leqJ}\cap{\geq_\J}.
\]
The remaining two Green's equivalences are
\[
\H = \L\cap\R \AND \D = \L\vee\R,
\]
where the latter denotes the join in the lattice $\Eq(S)$.  It is well known that $\D=\L\circ\R=\R\circ\L$
\NRrev{(where $\circ$ denotes the usual composition of binary relations)}, and that $\D=\J$ if $S$ is finite.  
We denote the $\L$-class of an element $x\in S$ by $L_x$, and similarly for $\R$-, $\J$-, $\H$- and $\D$-classes.  If $\K$ is any of $\L$, $\R$ or $\J$, then the set $S/\K = \set{K_x}{x\in S}$ of all $\K$-classes of $S$ has an induced partial order $\leq$, given by
\[
K_x \leq K_y \iff x\leq_\K y \qquad\text{for all $x,y\in S$.}
\]

One of the most important basic results concerning Green's relations is the following; for a proof, see for example \cite[Lemma 2.2.1]{Howie1995}.  

\begin{lemma}[Green's Lemma]\label{lem:GL}
Let $x$ and $y$ be $\R$-related elements of a semigroup $S$, so that $y=xs$ and $x=yt$ for some $s,t\in S^1$.  Then the maps
\[
L_x\to L_y;\ u\mt us \AND L_y\to L_x;\ v\mt vt
\]
are mutually inverse bijections.  Moreover, these maps restrict to mutually inverse bijections
\[
H_x\to H_y;\ u\mt us \AND H_y\to H_x;\ v\mt vt.  \epfreseq
\]
\end{lemma}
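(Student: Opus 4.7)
The plan is to establish the $\L$-class assertion first, and then derive the $\H$-class restriction by showing that the bijections $L_x\to L_y$ and $L_y\to L_x$ also preserve the $\R$-relation.

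For the $\L$-class bijections, I would first check well-definedness: if $u\in L_x$ then $u\mr\L x$, and since $\L$ is a right congruence (which follows immediately from the defining property $S^1x=S^1y$), we have $us\mr\L xs=y$, so $us\in L_y$; the map $v\mt vt$ is handled analogously. To verify the two maps are mutually inverse, take $u\in L_x$ and use $u\leqL x$ to write $u=fx$ for some $f\in S^1$; then $(us)t=fxst=fyt=fx=u$. A symmetric computation gives $(vt)s=v$ for $v\in L_y$.

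The main obstacle is the $\H$-class part, where we additionally need the implication $u\mr\R x\implies us\mr\R y$ (and dually). This does \emph{not} follow from general principles, because $\R$ is a left (not right) congruence. The key idea is to exploit the specific elements $s$ and $t$ witnessing $x\mr\R y$. Writing $u=xp$ and $x=uq$ with $p,q\in S^1$ from $u\mr\R x$, I would verify $us\leqR y$ directly via $us=xps=ytps\in yS^1$. For the converse inequality $y\leqR us$, I would reuse the identity $(us)t=u$ already established in the $\L$-class step: this gives $y=xs=uqs=(us)tqs\in usS^1$. A completely dual argument handles $v\mr\R y\implies vt\mr\R x$.

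Combined with the $\L$-class result, this shows that the two maps restrict to well-defined functions $H_x\to H_y$ and $H_y\to H_x$, which inherit the mutually inverse property from the ambient $\L$-class bijections, completing the proof.
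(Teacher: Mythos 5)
Your proof is correct, but there is nothing in the paper to compare it against: the paper states Green's Lemma as a known result and defers the proof to \cite[Lemma 2.2.1]{Howie1995}. Your argument is essentially the standard textbook one — well-definedness of $u\mt us$ via the right-congruence property of $\L$, mutual inverseness via $u=fx$ and the identity $(us)t=f(xs)t=fyt=u$, and then a separate check that $\R$-classes are respected. Two small remarks. First, the $\R$-class step can be streamlined: the identity $(us)t=u$ already gives $us\in uS^1$ and $u\in (us)S^1$, i.e.\ $u\rR us$, and combined with $u\rR x\rR y$ this puts $us$ in $R_y$ without introducing the witnesses $p,q$ at all (this also proves the slightly stronger standard statement that the bijection $L_x\to L_y$ preserves each individual $\R$-class, which is how the paper uses the lemma, e.g.\ in the proofs of Lemmas \ref{lem:D} and \ref{lem:L}). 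Second, a notational slip: the fact that $\L$ is a right congruence follows from the defining property $u\rL v\iff S^1u=S^1v$ of $\L$ itself, not from ``$S^1x=S^1y$'', which would assert $x\rL y$ — a hypothesis you are not given. Neither point affects the validity of the argument.
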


\JErev{There is a left-right dual of the previous result, whose statement we omit, but throughout the article we will frequently refer to both formulations as Green's Lemma.  We note that Green's Lemma, and many of the other coming results and arguments concerning Green's relations, can be conveniently visualised through the use of so-called \emph{egg-box diagrams}, as explained for example in \cite[Chapter~2]{Howie1995}.}

A $\J$-class $J$ of a semigroup $S$ is \emph{stable} if
\[
x \rJ sx \implies x \rL sx \AND x \rJ xs \implies x \rR xs \qquad\text{for all $x\in J$ and $s\in S^1$.}
\]
A stable $\J$-class is in fact a $\D$-class \cite[Proposition 2.3.9]{Lallement1979}.  A semigroup $S$ is \emph{stable} if every $\J$-class is stable.  All finite semigroups are stable \cite[Theorem A.2.4]{RS2009}.  There is an obvious left-right dual of the next result, but again we will not state it.

\begin{lemma}\label{lem:stab}
Let $S$ be a semigroup, let $x,y\in S$ and $s\in S^1$, and suppose $x\rR y$.  Then
\ben
\item \label{stab1} $sx\rL x \iff sy\rL y$,
\item \label{stab2} if the $\J$-class containing $x,y$ is stable, then $sx\rJ x \iff sx\rL x \iff sy\rL y \iff sy\rJ y$.
\een
\end{lemma}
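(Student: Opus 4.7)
The plan is to prove part \ref{stab1} by a direct element-manipulation that is essentially Green's Lemma applied to left translates, and then obtain part \ref{stab2} by combining \ref{stab1} with the definition of stability together with the observation that $x$ and $y$ lie in a common $\J$-class.

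For \ref{stab1}, the first step is to note that $sx\leqL x$ always holds (since $sx\in S^1x$), so $sx\rL x$ reduces to the single condition $x\leqL sx$, i.e.~$x=tsx$ for some $t\in S^1$. The second step is to fix witnesses for $x\rR y$: writing $y=xp$ and $x=yq$ with $p,q\in S^1$. Then, assuming $x=tsx$, the computation
\[
y = xp = (tsx)p = ts(xp) = tsy
\]
shows $y\leqL sy$, and hence $sy\rL y$. The reverse implication is the symmetric argument, obtained by interchanging the roles of $(x,p)$ and $(y,q)$.

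For \ref{stab2}, I would first observe that $x\rR y$ implies $x\rJ y$, so both $x$ and $y$ lie in the assumed-stable $\J$-class, call it $J$. The inclusion ${\L}\sub{\J}$ then delivers the implications $sx\rL x\implies sx\rJ x$ and $sy\rL y\implies sy\rJ y$ for free, while the converse implications come directly from the definition of stability, applied to $x\in J$ and $y\in J$ respectively. Chaining these two equivalences with part \ref{stab1} yields the full chain of four equivalences claimed in the lemma.

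The main obstacle is, honestly, not much of one; the whole argument is essentially bookkeeping. The one subtlety worth flagging is that stability is needed only for the $\J$-to-$\L$ direction of each equivalence, and that one should first confirm $x\rJ y$ so that stability can legitimately be invoked at both $x$ and $y$ within a single stable $\J$-class.
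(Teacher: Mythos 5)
Your proposal is correct and takes essentially the same approach as the paper: your element computation $y=xp=(tsx)p=tsy$ for part \ref{stab1} is just an unpacked version of the paper's one-line appeal to the fact that $\L$ is a right congruence (right-multiplying the witness relation $sx\rL x$ by $p$), and part \ref{stab2} is obtained, as in the paper, by combining \ref{stab1} with the definition of stability.
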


\pf
\firstpfitem{\ref{stab1}}  By symmetry it suffices to establish the forwards implication.  \JEnew{Since $x\rR y$, we have $y=xt$ for some $t\in S^1$.  Since $\L$ is a right congruence, $sx\rL x$ implies $sxt \rL xt$, i.e.~$sy \rL y$.}


\pfitem{\ref{stab2}}  This follows immediately from \ref{stab1} and stability.
\epf

\subsection{Acts}

Let $S$ and $T$ be semigroups.
A \emph{left $S$-act} is a unary algebra $A$ with basic operations 
$\set{\lam_s}{s\in S}$ satisfying the law $\lam_s\lam_{s'}=\lam_{ss'}$ for all $s,s'\in S$.  We typically write $s\cdot a=\lam_s(a)$ for $s\in S$ and $a\in A$.  In this way, the defining law says that
$s\cdot (s'\cdot a)=(ss')\cdot a$ for all $s,s'\in S$ and $a\in A$.
Dually,
a \emph{right $T$-act} is a unary algebra $A$ with basic operations $\rho_t:A\to A;\ a\mapsto a\cdot t$ ($t\in T$), 
satisfying
$(a\cdot t)\cdot t' = a\cdot (tt')$ for all $t,t'\in T$ and all $a\in A$.
We say that $A$ is an \emph{$(S,T)$-biact} if it is simultaneously a left $S$-act and a right $T$-act, and each $\lambda_s$ commutes with each $\rho_t$, i.e.~if $(s\cdot a)\cdot t=s\cdot (a\cdot t)$ for all $s\in S$, $t\in T$, $a\in A$.

Since acts are (unary) algebras, we can talk about their subacts and congruences, as well as their $\G$-relations and associated principal factors.

Next we outline a link between actions and congruences that is  vital for the viewpoint of this paper.
Let $S$ be a semigroup. Then $S$ can be considered as a left $S$-act, right $S$-act and $(S,S)$-biact, where $\lambda_s$ and $\rho_s$ are the left and right multiplications by $s$.
We denote these three acts by ${}^S\!S$, $S^S$ and ${}^S\!S^S$, respectively, and record the following immediate observation.

\begin{lemma}
\label{la:SSS}
For any semigroup $S$:
\begin{thmenumerate}
\item
\label{it:SSS1}
$\LCong(S)=\Cong({}^S\!S)$, $\RCong(S)=\Cong(S^S)$ and $\Cong(S)=\Cong({}^S\!S^S)$.
\item
\label{it:SSS2}
The $\G$-equivalences of ${}^S\!S$, $S^S$ and ${}^S\!S^S$ are respectively  Green's relations $\L$, $\R$ and $\J$ of~$S$.  \epfres
\end{thmenumerate}
\end{lemma}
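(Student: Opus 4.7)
The plan is to unwrap definitions in each of the three settings, observing that the unary-algebra structure encoding multiplication by elements of $S$ corresponds exactly to the compatibility requirements in the semigroup-theoretic definitions.

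For part \ref{it:SSS1}, recall that the basic operations of ${}^S\!S$ are the left translations $\lambda_s \colon x \mapsto sx$ for $s \in S$. An equivalence relation $\sigma$ on $S$ is a congruence of this unary algebra exactly when $(x,y) \in \sigma$ implies $(sx, sy) \in \sigma$ for every $s \in S$, which is precisely the defining property of a left congruence. Dually, $\Cong(S^S) = \RCong(S)$. The biact ${}^S\!S^S$ has as basic operations both left and right translations, so its congruences are exactly the equivalences compatible with multiplication on both sides, i.e., the two-sided congruences of $S$.

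For part \ref{it:SSS2}, the main task is to identify $\la a \ra$ in each act, for $a \in S$. In ${}^S\!S$, the law $\lambda_s \lambda_{s'} = \lambda_{ss'}$ means that every composition of basic operations reduces to a single $\lambda_s$; therefore $\la a \ra = \{a\} \cup \{sa : s \in S\} = S^1 a$. Consequently $a \leqG b$ in ${}^S\!S$ is equivalent to $a \in S^1 b$, i.e.~to $a \leqL b$, so the $\G$-equivalence on ${}^S\!S$ coincides with $\L$. The argument for $S^S$ is entirely dual and yields $\R$. For the biact ${}^S\!S^S$, the commuting relations $\lambda_s \rho_t = \rho_t \lambda_s$ allow one to write every term operation in the normal form $x \mapsto sxt$ with $s,t \in S^1$, so $\la a \ra = S^1 a S^1$, and the induced preorder matches $\leqJ$.

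The only real obstacle, such as it is, lies in being careful with the generating-set computations: one must check that the defining law $\lambda_s \lambda_{s'} = \lambda_{ss'}$ (together with the commutation of left and right translations in the biact case) suffices to reduce arbitrary composites of basic operations to the claimed normal forms, and that $a$ itself lies in $\la a \ra$ (corresponding to the trivial term operation, i.e.~the case $s=t=1 \in S^1$). Modulo these routine verifications, both parts of the lemma follow directly from matching definitions, which is why the observation is flagged as immediate.
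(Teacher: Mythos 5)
Your proposal is correct and follows exactly the route the paper intends: the paper records this lemma as an ``immediate observation'' with no written proof, and your unwinding of the definitions (congruence compatibility with the translations $\lambda_s$, $\rho_t$ for part (i), and the normal forms $\la a\ra = S^1a$, $aS^1$, $S^1aS^1$ for part (ii)) is precisely the argument being left to the reader. The care you take over the identity term operation and the reduction of composites via $\lambda_s\lambda_{s'}=\lambda_{ss'}$ and the commutation law is appropriate and complete.
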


\begin{rem}
Unary algebras, including semigroup acts, can naturally be viewed as certain types of directed graphs, such as those introduced in 
\cite{TC1936,Stephen1996,Stephen1990,Stallings1983,SW1984}, and more recently in~\cite{AMT2023}. From this viewpoint, congruences of unary algebras could be replaced by congruences of the corresponding digraphs, and the results of Sections \ref{sect:left1}--\ref{sect:right} can all be reformulated in this way.
\end{rem}

\subsection{Sch\"{u}tzenberger groups}\label{subsect:H}

It is well known that the $\H$-classes of idempotents in a semigroup $S$ are precisely the maximal subgroups of $S$.
In fact, there is a way to generalise this and associate a group to \emph{every} $\H$-class. These groups are known as 
Sch\"{u}tzenberger groups, and we review them briefly here; for more information, see for example \cite[Section~2.4]{CPbook} or \cite[pp.~166--167]{Arbib1968}. 

Fix an $\H$-class $H$ of a semigroup~$S$.  Define the set
\[
T_R = T_R(H) = \set{t\in S^1}{Ht\sub H}=\set{t\in S^1}{Ht= H}=\set{t\in S^1}{Ht\cap H\neq\es}.
\]
The equality of the three sets above follows from 
Green's Lemma.  For $t\in T_R$, we define the map $\rho_t:H\to H$ by $x\rho_t=xt$ for all $x\in H$, and we set
\[
\Ga_R = \Ga_R(H) = \set{\rho_t}{t\in T_R}.
\]
\JEnew{(It is worth noting that the map $t\mt\rho_t$ is not necessarily injective:}
\NRrev{different elements of $T_R$ may well act on $H$ in the same way.)}
Then $\Ga_R$ is a group under composition, known as the (\emph{right}) \emph{Sch\"{u}tzenberger group of $H$}.  It is a simply transitive subgroup of the symmetric group~$\S_H$, meaning that for all $h,h'\in H$ there exists a unique $\al\in\Ga_R$ such that $h\al=h'$; it follows that $|\Ga_R|=|H|$.  The Sch\"{u}tzenberger groups associated to any two $\H$-classes within the same $\D$-class of $S$ are isomorphic.  If $H$ happens to be a group $\H$-class, then $\Ga_R(H)\cong H$; indeed, $\Ga_R$ is the (right) Cayley representation of $H$ in~$\S_H$ in this case.

Note that $\Ga_R$ was defined in terms of right translations.  
\JErev{There is also a \emph{left Sch\"{u}tzenberger group} $\Ga_L=\Ga_L(H)$, defined dually in terms of left translations, which are written to the left of their arguments.}
%
%
In fact, the groups $\Ga_R$ and $\Ga_L$ are isomorphic, with an explicit isomorphism given as follows.  Fix some $h_0\in H$.  For $\al\in\Ga_R$, let $\al'$ be the unique element of $\Ga_L$ for which $h_0\al = \al'h_0$.  Then $\Ga_R\to\Ga_L;\ \al\mt\al'$ is an isomorphism.  
 Note that the actions of $\Ga_R$ and $\Ga_L$ on $H$ commute, in the sense that $(\beta h)\alpha = \beta(h\alpha)$ for all $h\in H$, $\al\in\Ga_R$ and~$\be\in\Ga_L$.
Because of the isomorphism, we will often denote either of $\Ga_R(H)$ or $\Ga_L(H)$ simply by $\Ga(H)$, and call this \emph{the} Sch\"utzenberger group of $H$.

We can also define a group structure on $H$ itself, again with respect to some fixed $h_0\in H$.  For $h_1,h_2\in H$, let $\al_1,\al_2$ be the unique elements of $\Ga_R$ such that $h_1=h_0\al_1$ and $h_2=h_0\al_2$, and define $h_1\star h_2 = h_0\al_1\al_2$.  Note in fact that
\begin{equation}\label{eq:h1h2}
h_1\star h_2 = h_1\al_2 = \al_1'h_2,
\end{equation}
where $\Ga_R\to\Ga_L;\ \al\mt\al'$ is the isomorphism above.  Then $(H,\star)$ is a group with identity $h_0$, and is independent (up to isomorphism) of the choice of $h_0$.  Moreover, we have $(H,\star)\cong\Ga_R\cong\Ga_L$.  
If $H$ happens to be a group $\H$-class, then, taking $h_0$ to be the idempotent in $H$, we have $h_1\star h_2=h_1h_2$ for all $h_1,h_2\in H$.

We will need the following basic result later in the paper.  In the statement and proof, we keep the above notation, including the $\star$ operation on $H$, defined with respect to the fixed element~${h_0\in H}$.

\begin{lemma}\label{lem:sv}
Let $H$ be an $\H$-class of a semigroup $S$.  
For $s\in S^1$ and $h\in H$ the equality \NRrev{$us=u\star h$ (resp.~$su=h\star u$)} holds for some $u\in H$ if and only if it holds for all $u\in H$.
\end{lemma}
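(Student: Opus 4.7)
My plan is to prove the right-hand equality and then obtain the left-hand one by symmetry. The key observation is that the Schützenberger group $\Ga_R$ acts on $H$ not merely transitively but simply transitively, which is equivalent to saying that the action is free: any two elements of $\Ga_R$ that agree at a single point of $H$ must coincide everywhere on $H$.

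Given the hypothesis $us = u\star h$ for some $u\in H$, I would first note that $u\star h\in H$, so $us\in H$, and then appeal to the equivalent descriptions of $T_R$ listed at the start of Subsection \ref{subsect:H} to conclude $s\in T_R$. This makes $\rho_s\colon x\mapsto xs$ a bona fide element of $\Ga_R$. Next, using \eqref{eq:h1h2} I would rewrite $u\star h = u\alpha_h$, where $\alpha_h\in\Ga_R$ is the unique element satisfying $h_0\alpha_h = h$. The hypothesis then reads $u\rho_s = u\alpha_h$, and freeness forces $\rho_s = \alpha_h$ as elements of $\Ga_R$; therefore $vs = v\rho_s = v\alpha_h = v\star h$ for every $v\in H$.

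The left-handed version is proved by the dual argument, with $T_L$, $\Ga_L$, and the companion identity $h\star u = \alpha_h' u$ from \eqref{eq:h1h2} taking the place of their right-handed counterparts; one concludes $sv = h\star v$ for all $v\in H$ from the equality $\lambda_s = \alpha_h'$ in $\Ga_L$. The only step that requires any care is recognising that $us\in H$ forces $s\in T_R$, and this is essentially built into the three equivalent definitions of $T_R$, so there is no substantive obstacle.
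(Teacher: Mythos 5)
Your argument is correct and follows essentially the same route as the paper's proof: deduce $s\in T_R$ from $us\in H$, identify $u\star h$ with $u\alpha$ via \eqref{eq:h1h2}, and use simple transitivity (freeness) of $\Ga_R$ to conclude $\rho_s=\alpha$, whence the equality holds for all $v\in H$. The paper dispatches the $su=h\star u$ case by symmetry rather than writing out the dual, but that is the only (cosmetic) difference.
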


\pf
\NRrev{By symmetry, we only consider the equation $us=u\star h$, for which only the forwards implication needs proof.  So, suppose $us=u\star h$ for some $s\in S^1$ and $u,h\in H$, and let $v\in H$ be arbitrary.  
By \eqref{eq:h1h2} we have $u\star h=u\alpha$, where $\alpha$ is the unique element of $\Gamma_R$ with $h=h_0\alpha$.
Since $us\in H$, it follows that $s\in T_R$, and hence $\rho_s\in \Gamma_R$.
Now,  $u\alpha=u\star h =us=u\rho_s$.  
Since $\Gamma_R$ acts simply transitively on $H$ it follows that $\alpha=\rho_s$.
Using \eqref{eq:h1h2} once more we have
$vs=v\rho_s=v\alpha=v\star h$, as required.
}
\epf

\section{Left congruences, actions and $\L$-classes}\label{sect:left1}

We now apply the theory developed in Sections \ref{sect:Rees} and \ref{sect:unary} to the left congruences of a semigroup $S$.
Recall from Lemma \ref{la:SSS} that these are precisely the congruences of the left $S$-act ${}^S\!S$.
Theorem~\ref{thm:G} decomposes the height of $\LCong(S)=\Cong({}^S\!S)$ as the sum of the heights of the principal factors of $\G$-classes (when there are finitely many of these), which are now the $\L$-classes of $S$ (cf.~Lemma~\ref{la:SSS}).
Recall that the principal factor associated to an $\L$-class $L$ of $S$ is the left $S$-act
\[
\ol L = L\cup\{0\},
\qquad\text{with action}\qquad s\cdot x = \begin{cases}
sx &\text{if $x,sx\in L$,}\\
0 &\text{otherwise}.
\end{cases}
\]
Let us denote this $S$-act by ${}^S\ol L$.  
Acts of this type are well known in semigroup theory; for example, they appear as partial acts in \cite[Section 2.5]{Lallement1979}.

Combining Theorem \ref{thm:G} \JEnew{with both parts of} Lemma \ref{la:SSS} we obtain:

\begin{thm}\label{thm:S}
If $S$ is a semigroup with finitely many $\L$-classes, $L_1,\ldots,L_k$, then
\[
\Ht(\LCong(S)) = \sum_{r=1}^k \Ht(\Cong({}^S\ol L_r)) - k.\epfreseq
\]
\end{thm}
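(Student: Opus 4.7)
The plan is to reduce the statement to a direct application of Theorem \ref{thm:G} to the unary algebra $A = {}^S\!S$, leaning on the two parts of Lemma \ref{la:SSS} to translate between the semigroup-theoretic and act-theoretic viewpoints. First, by Lemma \ref{la:SSS}\ref{it:SSS1}, the left-hand side $\Ht(\LCong(S))$ equals $\Ht(\Cong({}^S\!S))$, so it suffices to compute the latter. Second, by Lemma \ref{la:SSS}\ref{it:SSS2}, the $\G$-equivalence of ${}^S\!S$ coincides with Green's relation $\L$ on $S$; hence the $\G$-classes of the unary algebra ${}^S\!S$ are exactly the $\L$-classes $L_1,\dots,L_k$, and in particular there are only finitely many of them, as Theorem \ref{thm:G} requires.

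The one point that needs checking is that, for each $\L$-class $L_r$, the principal factor of $L_r$ inside the unary algebra ${}^S\!S$ (in the general sense of Section \ref{sect:algebras}) coincides with the left $S$-act ${}^S\ol L_r$ defined just before the theorem. This is immediate from unrolling the two definitions: the basic unary operations of ${}^S\!S$ are the left translations $\lam_s:x\mt sx$ ($s\in S$), so the induced operations $\ol{\lam_s}$ on $\ol L_r = L_r\cup\{0\}$ are given by
\[
\ol{\lam_s}(x) = \begin{cases} sx & \text{if } x,sx\in L_r, \\ 0 & \text{otherwise,}\end{cases}
\qquad \ol{\lam_s}(0)=0,
\]
which is exactly the action displayed in the definition of ${}^S\ol L_r$.

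With these identifications in hand, Theorem \ref{thm:G} applied to ${}^S\!S$ yields
\[
\Ht(\Cong({}^S\!S)) = \sum_{r=1}^k \Ht(\Cong(\ol L_r)) - k = \sum_{r=1}^k \Ht(\Cong({}^S\ol L_r)) - k,
\]
and the first quantity equals $\Ht(\LCong(S))$ by the previous paragraph. There is no genuine obstacle here: the entire content of the theorem is already packaged in Theorem \ref{thm:G}, and the argument amounts to recognising that ${}^S\!S$ is the correct unary algebra to which that result should be applied, together with the routine verification that its principal factors are precisely the partial acts ${}^S\ol L_r$.
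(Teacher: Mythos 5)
Your proposal is correct and is exactly the paper's argument: the theorem is obtained by combining Theorem \ref{thm:G} with both parts of Lemma \ref{la:SSS}, after observing that the principal factors of the $\G$-classes of ${}^S\!S$ are precisely the acts ${}^S\ol L_r$. Your explicit verification of that last identification is the only detail the paper leaves implicit, and it is carried out correctly.
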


This reduces the computation of $\Ht(\LCong(S))$ to computing heights of principal factors of $\L$-classes of $S$.
A further reduction is given by noticing
 that  the $\L$-classes from the same  $\D$-class all make equal contributions, as a consequence of the following lemma.
 The result can be regarded as folklore; for instance see \cite[Proposition 2.5.2]{Lallement1979}, which deals with partial actions on $\R$-classes. We include a proof for completeness, and for later reference.  

\begin{lemma}\label{lem:D}
If $\L$-classes $L_1$ and $L_2$ of a semigroup $S$ belong to the same $\D$-class, then
${{}^S\ol L_1\cong {}^S\ol L_2}$.
\end{lemma}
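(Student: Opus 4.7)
The plan is to construct an isomorphism ${}^S\ol L_1 \to {}^S\ol L_2$ by exploiting the Green's Lemma correspondence between $\L$-classes linked by an $\R$-arrow. Since $L_1 \rD L_2$ and $\D = \R \circ \L$, I can pick any $x_1 \in L_1$ and find some $y \in L_2$ with $x_1 \rR y$. Writing $y = x_1 s$ and $x_1 = yt$ for some $s,t\in S^1$, Green's Lemma (Lemma \ref{lem:GL}) supplies mutually inverse bijections $\phi:L_1\to L_2;\ u\mt us$ and $\psi:L_2\to L_1;\ v\mt vt$. I then extend both by sending $0\mt 0$, obtaining maps $\ol\phi:\ol L_1\to\ol L_2$ and $\ol\psi:\ol L_2\to\ol L_1$ that are clearly mutually inverse bijections of the underlying sets.

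The work is in checking that $\ol\phi$ is a homomorphism of left $S$-acts, i.e.\ that $\ol\phi(a\cdot x)=a\cdot\ol\phi(x)$ for all $a\in S$ and $x\in\ol L_1$. The case $x=0$ is immediate. For $x=u\in L_1$, I need to show that $au\in L_1 \iff aus\in L_2$, and that when these equivalent conditions hold, the element $aus$ in $\ol L_2$ arises as $\ol\phi(au)$, which is also $aus$. The implication $au\in L_1 \implies aus\in L_2$ is straightforward: if $au\in L_1$, then $\phi(au)=(au)s=aus$ lies in $L_2$.

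The main obstacle is the reverse implication: assuming $aus\in L_2$, I must deduce $au\in L_1$. Here the key ingredient is the identity $u=ust$, which is precisely what it means for $\psi\circ\phi$ to be the identity on $L_1$. If $aus\in L_2$, then $aus\rL us$, so $us=q\cdot aus$ for some $q\in S^1$; multiplying on the right by $t$ yields $u = ust = qaust = qau$, so $u\leqL au$. The reverse inequality $au\leqL u$ is automatic, hence $au\rL u$, i.e.\ $au\in L_1$, as required.

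Once $\ol\phi$ is confirmed to be a homomorphism, a symmetric argument (using $v=vts$ for $v\in L_2$) shows the same for $\ol\psi$, completing the verification that $\ol\phi$ is an isomorphism of left $S$-acts. The whole argument hinges on the two Green's Lemma identities $u=ust$ and $v=vts$; these are precisely the ingredients that convert the combinatorial bijection $L_1\to L_2$ into an $S$-act isomorphism between the principal factors.
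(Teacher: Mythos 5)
Your proof is correct and takes essentially the same approach as the paper: extend the Green's Lemma bijection $u\mt us$ by $0\mt 0$ and verify the $S$-act morphism property via the key equivalence $au\in L_1\iff aus\in L_2$. The only (minor) difference is that you establish this equivalence by a direct computation from $u=ust$, whereas the paper obtains it by citing Lemma \ref{lem:stab}\ref{stab1} together with the fact that the Green's Lemma bijection is $\R$-class-preserving.
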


\pf
By the assumption and Green's Lemma  there exists $a\in S^1$ such that
$L_1\to L_2;\ x\mt xa $ is an \JEnew{$\R$-class-preserving} bijection.
We  extend this to a bijection $f:{}^S\overline{L}_1\rightarrow {}^S\overline{L}_2$ by setting \JEnew{$0f:=0$}, and
we claim that $f$ is an $S$-act morphism, and hence an isomorphism.
To prove this, let $x\in {}^S \overline{L}_1$ and $s\in S$ be arbitrary.  
If $x=0$ then $(s\cdot x)f=0f=0=s\cdot 0=s\cdot (xf)$.
Otherwise $x\in L_1$, in which case \JErev{we have $sx\in L_1(=L_x)$ if and only if $sxa\in L_2(=L_{xa})$}, by Lemma \ref{lem:stab}\ref{stab1} with $y=xa$.
Now:
\bit
\item if $sx\in L_1$ and $sxa\in L_2$, then $s\cdot(x f) = s\cdot(xa) = sxa = (s\cdot x) f$;
\item if $sx\not\in L_1$ and $sxa\not\in L_2$, then $s\cdot (x f) = s\cdot (xa) = 0 = 0 f = (s\cdot x) f$.
\eit
Thus $f$ is indeed a morphism, and the proof is complete.
\epf

\begin{thm}\label{thm:S2}
Let $S$ be a semigroup with finitely many $\L$-classes,
let $L_1,\dots, L_n$ be representative $\L$-classes for the $\D$-classes of $S$, and let
$m_r$ be the number of $\L$-classes in the $\D$-class of~$L_r$. Then
\[
\Ht(\LCong(S)) = \sum_{r=1}^n m_r  (\Ht(\Cong({}^S\ol L_r)) - 1).
\]
\end{thm}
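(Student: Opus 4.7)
The plan is to combine Theorem \ref{thm:S} with Lemma \ref{lem:D} in a straightforward bookkeeping argument. Theorem \ref{thm:S} already decomposes $\Ht(\LCong(S))$ as a sum over all $\L$-classes, so the remaining work is simply to group the terms of that sum by $\D$-class.

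More precisely, let $L_1,\dots,L_n$ be the chosen $\D$-class representatives, and for each $r$ let $L_r^{(1)},\dots,L_r^{(m_r)}$ enumerate the $\L$-classes in the $\D$-class of $L_r$. Then the total number of $\L$-classes of $S$ is $k=\sum_{r=1}^n m_r$, which is finite by assumption, so Theorem \ref{thm:S} applies. By Lemma \ref{lem:D}, for each fixed $r$ and each $j\in\{1,\dots,m_r\}$ we have ${}^S\overline{L_r^{(j)}}\cong {}^S\overline{L}_r$ as left $S$-acts, hence $\Cong({}^S\overline{L_r^{(j)}})\cong\Cong({}^S\overline{L}_r)$ as lattices, and in particular these congruence lattices have the same height.

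Substituting into the formula given by Theorem \ref{thm:S}, I would write
\begin{align*}
\Ht(\LCong(S))
&= \sum_{r=1}^n \sum_{j=1}^{m_r} \Ht(\Cong({}^S\overline{L_r^{(j)}})) - k \\
&= \sum_{r=1}^n m_r \Ht(\Cong({}^S\overline{L}_r)) - \sum_{r=1}^n m_r \\
&= \sum_{r=1}^n m_r\bigl(\Ht(\Cong({}^S\overline{L}_r)) - 1\bigr),
\end{align*}
which is the desired equality. There is no genuine obstacle here: the only substantive inputs are already established, namely Theorem \ref{thm:S} (which in turn rests on Theorem \ref{thm:G} and Lemma \ref{la:SSS}) and the isomorphism of principal factors across an $\mathscr{D}$-class provided by Lemma \ref{lem:D}. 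Everything else is rearrangement of a finite sum.
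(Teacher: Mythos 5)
Your proposal is correct and follows exactly the same route as the paper: apply Theorem \ref{thm:S} to the full list of $\L$-classes, use Lemma \ref{lem:D} to replace each term by that of the $\D$-class representative, and regroup the finite sum. No gaps and nothing further to add.
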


\pf
For each $1\leq r\leq n$, let the $\L$-classes in the $\D$-class of $L_r$ be $L_{r1},\ldots,L_{rm_r}$.  Then
\begin{align*}
\Ht(\LCong(S)) &= \sum_{r=1}^n\sum_{j=1}^{m_r} \Ht(\Cong({}^S\ol L_{rj})) - \sum_{r=1}^n m_r &&\text{by Theorem \ref{thm:S}}\\
&= \sum_{r=1}^n m_r \Ht(\Cong({}^S\ol L_r)) - \sum_{r=1}^n m_r &&\text{by Lemma \ref{lem:D}}\\
&= \sum_{r=1}^n m_r (\Ht(\Cong({}^S\ol L_r)) - 1). &&\qedhere
\end{align*}
\epf

\section{Left congruences and Sch\"{u}tzenberger groups}\label{sect:left2}

In this section we build on the previous section by focussing on the left $S$-act ${}^S\overline{L}$
associated to an arbitrary $\L$-class $L$ of $S$.
The $\H$-classes of $S$ contained in $L$ and their Sch\"{u}tzenberger groups will play an important role.
Thus, we will consider the following relation on $\overline{L}=L\cup\{0\}$:
\[
\H_{\ol L} := \H\restr_L\cup\{(0,0)\}.
\]

\begin{lemma}\label{lem:L}
If $L$ is an $\L$-class of a semigroup $S$, then $\H_{\ol L} \in \Cong({}^S\ol L)$.
\end{lemma}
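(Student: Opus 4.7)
The plan is to verify directly that $\H_{\ol L}$ is an equivalence relation and then check compatibility with each of the basic operations $\lambda_s$ of the left $S$-act ${}^S\ol L$.

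That $\H_{\ol L}$ is an equivalence on $\ol L$ is immediate: $\H\restr_L$ is an equivalence on $L$, and throwing in the pair $(0,0)$ extends it to a reflexive, symmetric, transitive relation on $\ol L = L\cup\{0\}$.

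For compatibility, I would fix $s\in S$ and $(x,y)\in\H_{\ol L}$, and split into cases. The pair $(x,y)=(0,0)$ is trivial, since then $s\cdot x=s\cdot y=0$. So the only case of substance is $x,y\in L$ with $x\rH y$, giving both $x\rL y$ and $x\rR y$. The goal is to show $(s\cdot x, s\cdot y)\in\H_{\ol L}$. The key observation is that $x\rR y$ allows me to apply Lemma~\ref{lem:stab}\ref{stab1}, which yields $sx\rL x\iff sy\rL y$; equivalently, since $x,y\in L$, we have $sx\in L\iff sy\in L$. This symmetry handles both sub-cases uniformly.

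If $sx\notin L$ (and hence $sy\notin L$), then by definition of the action $s\cdot x=s\cdot y=0$, so the image pair is $(0,0)\in\H_{\ol L}$. If instead $sx,sy\in L$, then $s\cdot x=sx$ and $s\cdot y=sy$; lying in the same $\L$-class $L$ gives $sx\rL sy$ for free, and the standard fact that $\R$ is a left congruence on $S$ combined with $x\rR y$ gives $sx\rR sy$. Hence $sx\rH sy$, so $(s\cdot x, s\cdot y)\in\H\restr_L\subseteq\H_{\ol L}$, as required.

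I do not expect any serious obstacle here: once one spots that Lemma~\ref{lem:stab}\ref{stab1} is the correct tool for transferring the condition $sx\in L$ across $\H$-related elements, the remaining bookkeeping is a routine case split, with the $\R$-part of $\H$ handled by the classical fact that $\R$ is a left congruence and the $\L$-part handled tautologically since both images land inside the single $\L$-class $L$.
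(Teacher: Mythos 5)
Your proof is correct and follows essentially the same route as the paper: both verify the equivalence property, dispose of the $(0,0)$ case, and use Lemma~\ref{lem:stab}\ref{stab1} to establish $sx\in L\iff sy\in L$. The only (minor) difference is in the final step: where the paper invokes Green's Lemma to see that $u\mapsto su$ carries $\H$-classes of $R_x$ to $\H$-classes of $R_{sx}$, you instead combine the tautological $\L$-relation inside $L$ with the classical fact that $\R$ is a left congruence — an equally valid and slightly more elementary way to conclude $sx\rH sy$.
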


\pf
It is clear that $\H_{\overline{L}}$ is an equivalence relation, so it remains to establish (left) compatibility.
To this end let $(u,v)\in\H_{\ol L}$ and $s\in S$. If $u=v=0$ then $(s\cdot u,s\cdot v)=(0,0)\in \H_{\ol L}$, so now suppose $u,v\in L$.
Lemma~\ref{lem:stab}\ref{stab1} gives $su\in L\iff sv\in L$. If $su,sv\not\in L$ then
$(s\cdot u,s\cdot v)=(0,0)\in \H_{\overline{L}}$. Otherwise, when $su,sv\in L$, Green's Lemma implies that the mapping
$x\mapsto sx$ is a bijection from $R_u$ to~$R_{su}$  that  preserves $\H$-classes. In particular,
$(s\cdot u,s\cdot v)=(su,sv)\in\H\restr_L\subseteq\H_{\overline{L}}$, completing the proof.
\epf

We now bring the Sch\"{u}tzenberger groups of the $\H$-classes into play. For an $\H$-class $H$, we will work with the representation $(H,\star)$ of $\Gamma(H)$, as introduced in Subsection \ref{subsect:H}. Recall that this representation depends on an arbitrary, but fixed, $h_0\in H$, which will always be assumed as given.

\begin{prop} \label{pr:Hrest}
Let $S$ be a semigroup, $L$ an $\L$-class of $S$, and $H$ an $\H$-class \JEnew{contained in}~$L$.
The mapping
\[
\phi:[\Delta_{\overline{L}},\H_{\overline{L}}]\rightarrow \LCong(H,\star);\ \sigma\mapsto\sigma\restr_{H}
\]
is a lattice isomorphism. Consequently, the interval $[\Delta_{\overline{L}},\H_{\overline{L}}]$ is isomorphic to the subgroup lattice 
$\Sub(\Gamma(H))$ of the Sch\"{u}tzenberger group of $H$.
\end{prop}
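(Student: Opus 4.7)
The plan is to produce a two-sided inverse $\psi:\LCong(H,\star)\to[\Delta_{\overline L},\H_{\overline L}]$ to $\phi$. Once done, that both maps preserve inclusion is visible, and the closing consequence follows from Lemma~\ref{lem:CongG} applied to the isomorphism $(H,\star)\cong\Gamma(H)$ from Subsection~\ref{subsect:H}.

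Well-definedness of $\phi$ boils down to Lemma~\ref{lem:sv}. Fix $\sigma\in[\Delta_{\overline L},\H_{\overline L}]$, $(u,v)\in\sigma\restr_H$ and $h\in H$. Since $h\rL h_0$, we may pick $s\in S^1$ with $sh_0=h$; then $sh_0=h=h\star h_0$, so Lemma~\ref{lem:sv} promotes this to $su=h\star u$ and $sv=h\star v$ for our particular $u,v$ as well. When $s\in S$, the compatibility of $\sigma$ with the $S$-action (together with $su,sv\in H\subseteq L$, which forces $s\cdot u=su$ and $s\cdot v=sv$) delivers $(h\star u,h\star v)\in\sigma\restr_H$. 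The residual case $s=1\in S^1\setminus S$ forces $h=h_0$ and is trivial.

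For $\psi$: given $\tau\in\LCong(H,\star)$, choose for each $\H$-class $H'$ of $L$ an $r_{H'}\in S^1$ with $r_{H'}h_0\in H'$ (taking $r_H=1$), so that $\psi_{H'}:H\to H';\,h\mapsto r_{H'}h$ is a Green's Lemma bijection. Pull $\tau$ back along $\psi_{H'}$ to an equivalence $\sigma_{H'}$ on $H'$, and set $\psi(\tau):=\Delta_{\overline L}\cup\bigcup_{H'\subseteq L}\sigma_{H'}$. Plainly $\psi(\tau)$ is an equivalence on $\overline L$ lying in $[\Delta_{\overline L},\H_{\overline L}]$ with $\psi(\tau)\restr_H=\tau$. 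The heart of the argument is $S$-compatibility. For $(u,v)\in\sigma_{H'}$ and $s\in S$, Lemma~\ref{lem:stab}\ref{stab1} gives $su\in L\iff sv\in L$; when $su\notin L$ both act to $0$, otherwise Green's Lemma places $su,sv$ in a common $\H$-class $H''$ of $L$ and we must verify $(su,sv)\in\sigma_{H''}$. Writing $u=r_{H'}u'$, $v=r_{H'}v'$ with $(u',v')\in\tau$, the composite $\psi_{H''}^{-1}\circ(sr_{H'}\cdot):H\to H$ is a chain of Green's Lemma bijections (the inverse $\psi_{H''}^{-1}$ being itself left multiplication by a suitable element, by the left--right dual of Lemma~\ref{lem:GL}), hence is left multiplication by a single $t\in S^1$ satisfying $th_0=\bar h_0:=\psi_{H''}^{-1}(su_{H'})\in H$. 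The equation $th_0=\bar h_0=\bar h_0\star h_0$ and a second application of Lemma~\ref{lem:sv} then give $tu'=\bar h_0\star u'$ and $tv'=\bar h_0\star v'$; left-compatibility of $\tau$ under $\star$ closes the case.

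The identity $\phi\circ\psi=\mathrm{id}$ is immediate from $\psi(\tau)\restr_H=\tau$. For $\psi\circ\phi=\mathrm{id}$ one shows that any $\sigma\in[\Delta_{\overline L},\H_{\overline L}]$ is determined by $\sigma\restr_H$: a pair $(u,v)\in\sigma\cap(H'\times H')$ can be shuttled to $\sigma\restr_H$ by left multiplication by an element implementing $\psi_{H'}^{-1}$, and restored by $\psi_{H'}$, the multipliers necessarily lying in $S$ (not merely $S^1$) whenever $H'\neq H$. Both $\phi$ and $\psi$ patently respect inclusion. The principal obstacle is the compatibility computation in the surjectivity step: identifying the two-step composite $\psi_{H''}^{-1}\circ(sr_{H'}\cdot)$ with a single left $\star$-translation on $H$ is what allows the left-congruence property of $\tau$ to bridge distinct $\H$-classes of $L$.
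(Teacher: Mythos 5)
Your proof is correct, and for the key surjectivity step it takes a genuinely different route from the paper. The paper proves surjectivity top--down: it takes the congruence $\tau^\sharp$ of ${}^S\overline{L}$ \emph{generated} by $\tau$, invokes the standard description of a generated left congruence as a chain of elementary transitions $(s_i\cdot u_i, s_i\cdot v_i)$, and uses Lemma~\ref{lem:sv} to show each transition already lies in $\tau$, whence $\tau^\sharp\restr_H=\tau$. You instead build the preimage bottom--up: you transport $\tau$ to each $\H$-class $H'\subseteq L$ along a Green's Lemma bijection $\psi_{H'}$, take the union together with $\Delta_{\overline L}$, and verify $S$-compatibility directly. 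The crux of your verification --- that the composite $\psi_{H''}^{-1}\circ(sr_{H'}\cdot)$ is a single left translation of $H$ fixing it setwise, hence (by Lemma~\ref{lem:sv}) a left $\star$-translation, so that left-compatibility of $\tau$ applies --- is sound, and your observation that the shuttling multipliers lie in $S$ rather than merely $S^1$ whenever $H'\neq H$ correctly handles the fact that congruences of ${}^S\overline{L}$ are only required to respect the action of $S$. What your approach buys is an explicit two-sided inverse $\psi$ (making bijectivity and order-preservation transparent) and independence from the generating-pairs description of left congruences; what it costs is the extra work of checking that your candidate relation is in fact a congruence, which the paper gets for free by starting from $\tau^\sharp$. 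The well-definedness and injectivity portions of your argument coincide with the paper's (both rest on Lemma~\ref{lem:sv} and Green's Lemma shuttling), as does the final reduction to $\Sub(\Gamma(H))$ via Lemma~\ref{lem:CongG}.
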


\pf
To prove that $\phi$ is well defined, let $\sigma\in [\Delta_{\overline{L}},\H_{\overline{L}}]$.
Certainly $\si\restr_H$ is an equivalence on $H$.  For (left) compatibility, suppose ${(h_1,h_2)\in\si\restr_H}$ and $a\in H$.  
Let $t\in S^1$ be such that $th_1=a\star h_1$.
By Lemma \ref{lem:sv} we also have $th_2=a\star h_2 $.
But then $(a\star h_1,a\star h_2) = (t\cdot h_1,t\cdot h_2) \in \si$, which shows that $\sigma\restr_H$ is left compatible.

We now show that 
\[
\JEnew{\sigma\subseteq\sigma' \iff \sigma\restr_H\subseteq\sigma'\restr_H \qquad\text{for all $\sigma,\sigma'\in  [\Delta_{\overline{L}},\H_{\overline{L}}]$,}}
\]
which implies that $\phi$ is an order-preserving injection.
 With the forwards implication being clear, suppose $\si\restr_H\sub\si'\restr_H$.  Let $(x,y)\in\si$; we must show that $(x,y)\in\si'$.  This is clear if $x=y=0$, so suppose instead that $x,y\in L$.  Since $\si\sub\H_{\ol L}$, we have $x,y\in H'$ for some $\H$-class $H'\sub L$.  By Green's Lemma we can fix $s,t\in S^1$ such that
\[
H\to H';\ u\mt su \AND H'\to H;\ v\mt tv
\]
are mutually inverse bijections.  Then $(tx,ty) = (t\cdot x,t\cdot y) \in \si\restr_H \sub \si'\restr_H$, and so
\[
(x,y) = (stx,sty) = (s\cdot tx,s\cdot ty) \in \si',
\]
as required.

We complete the proof by showing that $\phi$ is onto.
Let $\tau\in\LCong(H,\star)$, and consider $\tau^\sharp$, the congruence of $\overline{L}$ generated by $\tau$.
Certainly $\tau^\sharp\sub\H_{\ol L}^\sharp=\H_{\ol L}$.  To demonstrate that $\tau^\sharp\restr_H=\tau$, it suffices to establish the forwards inclusion, so suppose $(x,y)\in\tau^\sharp\restr_H$.  
\NRrev{Using the standard description of the left congruence given by a set of generating pairs (e.g.~see 
\cite[Lemma I.4.37]{KKM2000}),}
\JEnew{there is a sequence
$
x = x_0 , x_1 , \ldots , x_k = y
$
of elements of $\ol L$ such that for each $0\leq i<k$,}
\[
(x_i,x_{i+1}) = (s_i\cdot u_i,s_i\cdot v_i) \qquad\text{for some $s_i\in S^1$ and $(u_i,v_i)\in\tau$.}
\]
Since each $(x_i,x_{i+1}) \in \tau^\sharp \sub \H_{\ol L}$, and since $x_0=x\in H$, it follows that $x_i\in H$ for all $i$.  Note that $(s_i\cdot u_i,s_i\cdot v_i) = (s_iu_i,s_iv_i)$.  As $s_iu_i=x_i\in H$ and $(H,\star)$ is a group, there
exists $h_i\in H$ such that $s_iu_i=h_i\star u_i$. Lemma~\ref{lem:sv} tells us that  $s_iv_i=h_i\star v_i$.  
Combining this with the fact that $\tau$ is a left congruence on $(H,\star)$ and $(u_i,v_i)\in\tau$, it follows that
\[
(x_i,x_{i+1}) = (s_iu_i,s_iv_i) = (h_i\star u_i,h_i\star v_i) \in \tau.
\]
Hence $x = x_0 \mr\tau x_1 \mr\tau \cdots \mr\tau x_k = y$, as required.
\epf

The next result follows by combining \eqref{eq:Lxx} with Propositions \ref{pro:srs} and \ref{pr:Hrest} (and the Correspondence Theorem).

\begin{cor}
\label{co:LSGa}
Let $L$ be an $\L$-class of a semigroup $S$, let $H$ be an $\H$-class \JEnew{contained in} $L$, and let $\Gamma$ be the Sch\"{u}tzenberger group of $H$. Then
\[
\Ht(\Cong({}^S\overline{L}))\geq \Ht(\Sub(\Gamma))+\Ht(\Cong({}^S\overline{L}/\H_{\overline{L}}))-1.
\]
If $\H_{\overline{L}}$ is a modular element in $\Cong({}^S\overline{L})$ then
\[
\Ht(\Cong({}^S\overline{L}))= \Ht(\Sub(\Gamma))+\Ht(\Cong({}^S\overline{L}/\H_{\overline{L}}))-1.  \epfreseq
\]
\end{cor}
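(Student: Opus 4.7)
The plan is to apply inequality \eqref{eq:Lxx} (respectively Proposition \ref{pro:srs} in the modular case) to the lattice $\Cong({}^S\overline{L})$ at its element $x := \H_{\overline{L}}$, which indeed lies in this lattice by Lemma \ref{lem:L}. The whole argument then reduces to identifying the down-set $\H_{\overline{L}}^{\downarrow}$ and the up-set $\H_{\overline{L}}^{\uparrow}$ inside $\Cong({}^S\overline{L})$, both of which are already prepared earlier in the paper.

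For the down-set, $\H_{\overline{L}}^{\downarrow} = [\Delta_{\overline{L}}, \H_{\overline{L}}]$, and Proposition~\ref{pr:Hrest} supplies an isomorphism $[\Delta_{\overline{L}}, \H_{\overline{L}}] \cong \Sub(\Gamma)$; consequently $\Ht(\H_{\overline{L}}^{\downarrow}) = \Ht(\Sub(\Gamma))$. For the up-set, $\H_{\overline{L}}^{\uparrow} = [\H_{\overline{L}}, \nabla_{\overline{L}}]$, and the Correspondence Theorem (cited in the excerpt as \cite[Theorem 6.20]{BS1981}) identifies this interval with the congruence lattice of the quotient, giving $\Ht(\H_{\overline{L}}^{\uparrow}) = \Ht(\Cong({}^S\overline{L}/\H_{\overline{L}}))$. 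Substituting these two height values into \eqref{eq:Lxx} produces the asserted inequality, and substituting them into the equality of Proposition~\ref{pro:srs} — which applies precisely when $\H_{\overline{L}}$ is modular — produces the asserted equality. The proof is pure assembly of prior results, so there is no real obstacle; the only point requiring a moment's care is simply recording that $\H_{\overline{L}}$ is an element of $\Cong({}^S\overline{L})$ in the first place, which is exactly the content of Lemma \ref{lem:L}.
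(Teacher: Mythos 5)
Your proposal is correct and matches the paper's own (one-line) justification exactly: the corollary is stated there as following by combining \eqref{eq:Lxx} and Proposition \ref{pro:srs} applied at $x=\H_{\overline{L}}$ with Proposition \ref{pr:Hrest} for the down-set and the Correspondence Theorem for the up-set. Your additional remark that Lemma \ref{lem:L} is needed to place $\H_{\overline{L}}$ in $\Cong({}^S\overline{L})$ is a sensible bit of bookkeeping that the paper leaves implicit.
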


\JEnew{When $|L/\H| = m$ is finite, the act ${}^S\ol L/\H_{\ol L}$ has size $m+1$, so it follows that
\[
2\leq \Ht(\Cong({}^S\overline{L}/\H_{\overline{L}})) \leq m+1.
\]
\NRrev{Each of these bounds can be attained.}
For example, consider the case in which $S$ is a left zero semigroup of size $m$.  Then $L=S$ is itself a single $\L$-class, and since $L$ is $\H$-trivial we have ${}^S\ol L/\H_{\ol L} \cong {}^S\ol L$.  Moreover, every equivalence on ${}^S\ol L$ is a congruence, so the height of $\Cong({}^S\ol L)$, and hence of $\Cong({}^S\overline{L}/\H_{\overline{L}})$, 
is $m+1$.
The attainment of the lower bound $2$ is the subject of the discussion for the remainder of this section.}


\NRrev{We will say that an algebra $A$ is \emph{congruence-free}  if its only congruences are $\De_A$ and $\nab_A$.
This property is more commonly known as simplicity, but we will avoid this term, as it has a different meaning in semigroup theory.  A congruence $\sigma$ on $A$ is said to be \emph{maximal} if $\sigma\neq\nabla_A$ and there is no congruence $\tau\in\Cong(A)$ such that $\sigma\subsetneq\tau \subsetneq \nabla_A$.
By the Correspondence Theorem, this is equivalent to the quotient $A/\sigma$ being non-trivial and congruence-free,
and also to $\Cong(A/\sigma)$ having height $2$.}

\begin{prop}
\label{pr:sep}
For an $\L$-class $L$ of a semigroup $S$, the following are equivalent:
\begin{thmenumerate}
\item
\label{it:sep1}
The left $S$-act ${}^S\overline{L}/\H_{\overline{L}}$ is \NRrev{congruence-free.}
\item
\label{it:sep2}
$\H_{\overline{L}}$ is the unique maximal   congruence of ${}^S\overline{L}$.
\item
\label{it:sep3}
For all $x,y\in L$ with $(x,y)\not\in\H$ there exists $s\in S$ such that precisely one of  $sx$ or $sy$ belongs to $L$.
\end{thmenumerate}
When these conditions hold we have $\Cong({}^S\overline{L})=[\Delta_{\overline{L}},\H_{\overline{L}}]\cup\{\nabla_{\overline{L}}\}
\cong \Sub(\Gamma)^\top$, and hence
\[
\Ht(\Cong({}^S\overline{L}))= \Ht(\Sub(\Gamma))+1,
\]
where $\Ga$ is the Sch\"utzenberger group of any $\H$-class in $L$.
\end{prop}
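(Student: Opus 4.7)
My plan is to establish the cycle \ref{it:sep2}$\Rightarrow$\ref{it:sep1}$\Rightarrow$\ref{it:sep3}$\Rightarrow$\ref{it:sep2}, and then read off the structural description. The implication \ref{it:sep2}$\Rightarrow$\ref{it:sep1} is a one-line consequence of the Correspondence Theorem, using $\H_{\overline{L}}\neq\nabla_{\overline{L}}$: if $\H_{\overline{L}}$ is the unique maximal congruence, then only $\H_{\overline{L}}$ and $\nabla_{\overline{L}}$ sit above $\H_{\overline{L}}$ in $\Cong({}^S\overline{L})$, so the quotient is congruence-free.

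For \ref{it:sep3}$\Rightarrow$\ref{it:sep2} I would take $\sigma\in\Cong({}^S\overline{L})\setminus\{\nabla_{\overline{L}}\}$ and show $\sigma\subseteq\H_{\overline{L}}$. By Lemma~\ref{lem:a0}, $\sigma$ contains no pair $(a,0)$ with $a\in L$. If $\sigma$ contained some $(x,y)\in L\times L$ with $(x,y)\notin\H$, then \ref{it:sep3} would furnish $s\in S$ with, say, $sx\in L$ and $sy\notin L$; applying $\lambda_s$ would yield $(sx,0)\in\sigma$, again contradicting Lemma~\ref{lem:a0}. Hence every pair of $\sigma$ is either $(0,0)$ or is $\H$-related within $L$, i.e.\ $\sigma\subseteq\H_{\overline{L}}$.

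The main obstacle is \ref{it:sep1}$\Rightarrow$\ref{it:sep3}, the only implication demanding an actual construction. Given $x,y\in L$ with $(x,y)\notin\H$, I would argue by contradiction: suppose $sx\in L\Leftrightarrow sy\in L$ for every $s\in S^1$, and let $\sigma$ be the congruence of ${}^S\overline{L}$ generated by $(x,y)$. Because the act is unary, $\sigma$ is the symmetric-transitive closure of $\{(s\cdot x,s\cdot y):s\in S^1\}$, and by our assumption every such pair lies in $L\times L\cup\{(0,0)\}$; hence so does $\sigma$, and in particular $\sigma\neq\nabla_{\overline{L}}$. The join $\tau:=\sigma\vee\H_{\overline{L}}$ strictly contains $\H_{\overline{L}}$, witnessed by $(x,y)$, so \ref{it:sep1} and the Correspondence Theorem force $\tau=\nabla_{\overline{L}}$. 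But $\sigma\cup\H_{\overline{L}}$ is already closed under the operations (as the union of two congruences), so $\tau$ is simply its transitive closure, which remains inside the equivalence relation $L\times L\cup\{(0,0)\}$ on $\overline{L}$ --- contradicting $\tau=\nabla_{\overline{L}}$, since the latter contains pairs $(0,a)$ with $a\in L$.

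For the final structural claim, \ref{it:sep2} directly gives $\Cong({}^S\overline{L})=[\Delta_{\overline{L}},\H_{\overline{L}}]\cup\{\nabla_{\overline{L}}\}$. Proposition~\ref{pr:Hrest} identifies the interval $[\Delta_{\overline{L}},\H_{\overline{L}}]$ with $\LCong(H,\star)$, and Lemma~\ref{lem:CongG} in turn identifies this with $\Sub(\Gamma)$; adjoining $\nabla_{\overline{L}}$ as a new top element yields $\Sub(\Gamma)^\top$, whose height is $\Ht(\Sub(\Gamma))+1$.
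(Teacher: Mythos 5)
Your proposal is correct and follows essentially the same route as the paper: the same cycle of implications, with Lemma~\ref{lem:a0} doing the work in \ref{it:sep3}$\Rightarrow$\ref{it:sep2} and Proposition~\ref{pr:Hrest} supplying the final identification with $\Sub(\Gamma)^\top$. The only cosmetic difference is that you prove \ref{it:sep1}$\Rightarrow$\ref{it:sep3} by contradiction (showing the generated congruence stays inside $(L\times L)\cup\{(0,0)\}$ if no separating $s$ exists), whereas the paper extracts the separating element directly from a generating sequence for $\nabla_{\overline{L}}$; and note that the structural claim $\Cong({}^S\overline{L})=[\Delta_{\overline{L}},\H_{\overline{L}}]\cup\{\nabla_{\overline{L}}\}$ really comes from your proof of \ref{it:sep3}$\Rightarrow$\ref{it:sep2} (which shows every non-universal congruence lies below $\H_{\overline{L}}$) rather than from the bare statement of \ref{it:sep2}.
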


\pf
\firstpfitem{\ref{it:sep2}$\implies$\ref{it:sep1}}  This is clear.

\pfitem{\ref{it:sep1}$\implies$\ref{it:sep3}}  Suppose $\H_{\ol L}$ is maximal, and let $x,y\in L$ with $(x,y)\not\in\H$.  By maximality, $\nab_{\ol L}$ is generated (as an $S$-act congruence) by $\H_{\ol L}\cup\{(x,y)\}$.  \JEnew{So there exists a sequence
$
{x = x_0 , x_1,\ldots, x_k = 0}
$
of elements of $\ol L$ such that for each $0\leq i<k$,}
\[
(x_i,x_{i+1}) = (s_i\cdot u_i,s_i\cdot v_i) \qquad\text{for some $s_i\in S^1$ and $(u_i,v_i)\in\H_{\ol L}\cup\{(x,y),(y,x)\}$.}
\]
Since $x_0\not=0$ and $x_k=0$, there exists $0\leq i<k$ for which $x_i\not=0$ and $x_{i+1}=0$.  So
\[
(x_i,0) = (x_i,x_{i+1}) = (s_i\cdot u_i,s_i\cdot v_i).
\]
We cannot have $(u_i,v_i)\in \H_{\ol L}$ (as $\H_{\ol L}$ is an $S$-act congruence and $(x_i,0)\not\in\H_{\ol L}$), so up to symmetry $(u_i,v_i)=(x,y)$.  Since $s_i\cdot x=x_i\in L$ and $s_i\cdot y=0$, it follows from the definition of the action that $s_ix\in L$ and $s_iy\not\in L$.

\pfitem{\ref{it:sep3}$\implies$\ref{it:sep2}}  It suffices to show that for any $x,y\in\ol L$ with $(x,y)\not\in\H_{\ol L}$, 
\NRrev{the congruence $(x,y)^\sharp$ generated by $(x,y)$ is equal to $ \nab_{\ol L}$.}
This follows immediately from Lemma \ref{lem:a0} if one of $x,y$ is $0$, so now suppose $x,y\in L$.  By assumption, and renaming $x,y$ if necessary, there exists $s\in S$ such that $sx\in L$ and $sy\not\in L$.  But then $(sx,0) = (s\cdot x,s\cdot y)\in(x,y)^\sharp$, and we again apply Lemma~\ref{lem:a0}.
\epf

\begin{defn}
\NRrev{We say an $\L$-class is $\H$-\emph{separable} if it satisfies one, and hence all, of the conditions \ref{it:sep1}--\ref{it:sep3} from Proposition \ref{pr:sep}.}
\end{defn}

We remark that if $L_1$ and $L_2$ are two $\L$-classes belonging to the same $\D$-class, then $L_1$ is $\H$-separable if and only if $L_2$ is $\H$-separable. This follows from the fact that the isomorphism ${}^S\overline{L}_1\rightarrow {}^S\overline{L}_2$  constructed in the proof of Lemma \ref{lem:D} preserves $\H$-classes by Green's Lemma.

\NRrev{
\JErev{It turns out that $\H$-separability of an $\L$-class $L$ is equivalent to a natural combinatorial condition when the $\D$-class containing $L$ is stable and regular, as will be the case in all of our applications.  To describe this condition, consider}
a $\D$-class $D$ of a semigroup $S$.  Suppose the~$\R$- and~$\L$-classes contained in $D$ are $R_i$ ($i\in I$) and $L_j$ ($j\in J)$, respectively.  The $\H$-classes in~$D$ are then $H_{ij}=R_i\cap L_j$ ($i\in I$, $j\in J$).  Define the $I\times J$ matrix
\[
M(D) = (m_{ij}), \WHERE m_{ij} = \begin{cases}
1 &\text{if $H_{ij}$ contains an idempotent,}\\
0 &\text{otherwise.}
\end{cases}
\]
Although this matrix depends on the chosen indexing of the $\R$- and $\L$-classes, we are interested in certain properties that are not dependent on the indexing.  

\begin{defn} 
\NRrev{We say a $\D$-class $D$ is \emph{row-faithful} (resp.\ \emph{column-faithful}) if no two rows (resp.\ columns) of $M(D)$ are equal.}
\end{defn}

Clearly, this notion is of interest only for $\D$-classes that contain idempotents, i.e. the regular $\D$-classes.
For those we have the following, \JErev{in which we must also assume stability:

\begin{prop}\label{prop:sep}
A stable, regular $\D$-class $D$ is row-faithful if and only if some (and hence every) $\L$-class in $D$ is $\H$-separable.
 \end{prop}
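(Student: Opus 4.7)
We prove both directions via the Rees matrix representation of the principal factor $\ol D$. Since $D$ is regular and stable, $\ol D$ is completely $0$-simple and isomorphic to a Rees matrix semigroup $M^0[G; I, \Lambda; P]$, in which $P_{\mu i}\neq 0 \iff m_{i\mu} = 1$. Throughout, we use that by stability $sx\L x \iff sx\J x$ for $x\in L$ and $s\in S$, and hence $sx\in L\iff sx\in D$. Condition \ref{it:sep3} of Proposition \ref{pr:sep} therefore reads: for all $x\in H_{ij}, y\in H_{i'j}$ with $i\neq i'$, there is some $s\in S$ such that exactly one of $sx, sy$ belongs to $D$.

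For the forward direction, assume $D$ is row-faithful and take $x\in H_{ij}, y\in H_{i'j}$ with $i\neq i'$. After possibly swapping $i$ and $i'$, there exists a column $\mu$ with $m_{i\mu} = 1$ and $m_{i'\mu} = 0$. Let $e$ be the idempotent in $H_{i\mu}$. Since $x\in R_e$ and $e$ is idempotent, $ex = x \in L$; on the other hand the Rees matrix product formula gives $ey \notin D$, because $H_{i'\mu}$ is not a group. So $s = e$ witnesses \ref{it:sep3}.

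For the reverse direction, suppose rows $i \neq i'$ of $M(D)$ coincide and take $x\in H_{ij}, y\in H_{i'j}$. By regularity we pick a column $\mu$ with $m_{i\mu} = m_{i'\mu} = 1$, and let $e\in H_{i\mu}, e'\in H_{i'\mu}$ be the idempotents. Since $x = ex$ we have $sx = (se)x$, and Rees matrix analysis together with stability yields $sx\in D \iff se\in D$; likewise $sy\in D\iff se'\in D$. Thus it suffices to prove $se\in D\iff se'\in D$, and by symmetry we only treat the forward implication.

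Assume $se\in D$; by stability $use = e$ for some $u\in S^1$. Since $e, e'$ are idempotents in the common $\L$-class $L_\mu$, a short Rees matrix computation gives $e'e = e'$, so $(e'us)e = e'use = e'$. From $e'us \leq_\J e'$ and $e' = (e'us)e \leq_\J e'us$ we conclude $e'us \in D$; further Rees matrix inspection places $e'us$ in $R_{i'} \cap L_\lambda$ for some column $\lambda$ with $m_{i\lambda}=1$, and by row-equality $m_{i'\lambda}=1$, so $(e'us)e' \in D$. On the other hand $(e'us)e' = e'u(se')$ by associativity, and if $se'\notin D$ then $se'$ lies in the two-sided ideal $I(D) := \{z\in S : z <_\J D\}$, forcing $e'u(se') \in I(D)$ and contradicting $(e'us)e' \in D$. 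Hence $se' \in D$. The crux of the whole proof is this last contradiction: it is what converts the purely combinatorial hypothesis of row-equality into the structural identity $se\in D\iff se'\in D$, by exploiting associativity together with the sharp dichotomy between $D$ and the ideal $I(D)$.
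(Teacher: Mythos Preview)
Your proof is correct. The forward direction is essentially the paper's argument recast in Rees matrix language: both pick an idempotent $e$ in the $\R$-class of $x$ lying in an $\L$-class that distinguishes the two rows, observe $ex=x$, and conclude $ey\notin D$ (the paper via \cite[Proposition~2.3.7]{Howie1995}, you via the sandwich-matrix entry $P_{\mu i'}$ being zero).

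The reverse direction, however, takes a genuinely different route from the paper. The paper argues constructively: given idempotents $e\in R_1\cap L$ and $f\in R_2\cap L$, and assuming $se\in L$, it locates an inverse $t$ of $se$ with $tse=e$, shows $ts$ is an idempotent in $R_1$, uses row-equality to find a matching idempotent $h\in R_2\cap L_{ts}$, and then invokes Green's Lemma to produce $u$ with $usf=f$, directly witnessing $sf\in L$. Your argument instead first reduces from $x,y$ to the idempotents $e,e'$, takes $u$ with $use=e$, forms $e'us$, uses row-equality at the level of the sandwich matrix to place $(e'us)e'$ in $D$, and then derives a contradiction from $se'\notin D$ via the ideal $I(D)$ below $D$. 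Your approach is somewhat more economical---no inverses, no appeal to Green's Lemma---and makes transparent that the crux is the dichotomy between $D$ and $I(D)$. The paper's approach, by contrast, stays entirely within Green's-relations manipulations and never passes to the Rees matrix representation, so it does not require the (easy but additional) observation that stability and regularity make $\ol D$ completely $0$-simple.
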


\pf
\firstpfitem{($\Rightarrow$)}
Suppose $D$ is row-faithful, and fix an $\L$-class $L\sub D$.  To establish $\H$-separability of $L$, we will verify that condition \ref{it:sep3} from Proposition \ref{pr:sep} holds.
Let $x,y\in L$ with ${(x,y)\not\in\H}$.  From $(x,y)\in\L$ it follows that $R_x\not=R_y$.  Since~$D$ is row-faithful, there exists an $\L$-class $L'\sub D$ such that (by symmetry)~$R_x\cap L'$ contains an idempotent, $e$ say, but $R_y\cap L'$ does not contain an idempotent.  Now, $ex = x\in L$, as $e\rR x$. Thus, the proof will be complete if we can show that $ey\not\in L$.  
This is certainly true if $ey\not\in D$, so suppose instead that $ey\in D$.  Then $ey\rJ e$, and it follows from stability that $ey\rR e$, whence $ey\in R_e$.  Since $L_e\cap R_y = L'\cap R_y$ contains no idempotent, it follows from \cite[Proposition 2.3.7]{Howie1995} that $ey\not\in R_e\cap L_y$.  But we showed above that $ey\in R_e$, so we must in fact have $ey\not\in L_y=L$.

\pfitem{($\Leftarrow$)}
Aiming to prove the contrapositive, suppose $D$ is not row-faithful. This means that there exist distinct $\R$-classes $R_1,R_2\sub D$ such that:
\begin{equation}
\label{eq:isorows}
R_1\cap L\cap E(S)=\es \iff R_2\cap L\cap E(S) = \es \qquad\text{for all } L\in D/\L.
\end{equation}
Since $D$ is regular, and since \eqref{eq:isorows} holds, there exists an $\L$-class $L\sub D$ such that both $R_1\cap L$ and $R_2\cap L$ contain  idempotents, say $e\in R_1\cap L$ and $f\in R_2\cap L$.  To complete the proof of this direction we will show that $L$ is not $\H$-separable, by showing that
\begin{equation}
\label{eq:LnotHsep}
se\in L \iff sf\in L \qquad\text{for every } s\in S.
\end{equation}
By symmetry, it suffices to prove only one direction of \eqref{eq:LnotHsep}.  So suppose $se\in L$.  Since $D$ is regular, $R_{se}$ must contain an idempotent, $g$ say.  By \cite[Theorem 2.3.4]{Howie1995}, the $\H$-class $R_1\cap L_g=R_e\cap L_g$ contains an inverse $t$ of $se$ such that $tse=e$ and $set=g$.  Since
\[
ts\leqJ t \mr\J e = tse \leqJ ts,
\]
it follows that $ts\mr\J t$, and then stability gives $ts\mr\R t$, i.e.~$ts\in R_1$.  Since $e$ is a left identity for its $\R$-class, we have  $ts = ets = (tse)ts = ts(ets) = (ts)^2$, so that $ts$ is an idempotent.  By~\eqref{eq:isorows}, the $\H$-class $R_2\cap L_{ts}$ also contains an idempotent, $h$ say.
Green's Lemma now implies the existence of some $u\in R_2\cap L_g$ such that $us=h$.
The idempotent~$h$ is a left identity for its $\R$-class, so that $f=hf=usf$.  The last conclusion says that $f\mr\L sf$, i.e.~that $sf\in L_f=L$, as required to complete the proof of 
\eqref{eq:LnotHsep}, and of the proposition.
\epf
}}

While $\H$-separability will prove invaluable in our concrete applications, its use in the special instance of a minimal $\L$-class $L$ is limited by the following observation.
In that case, $L=\overline{L}\setminus \{0\}$ is itself also an $S$-act, which we will denote by ${}^S\!L$.  It follows that $\nabla_L\cup\{(0,0)\}$ is a proper congruence
on $\overline{L}$ that contains $\H_{\overline{L}}$.
Hence~$\H_{\overline{L}}$ is maximal if and only if $L$ contains only one $\H$-class.
When a minimal $\L$-class $L$ is not an $\H$-class, we will need to compute $\Ht(\Cong({}^S\!L))$ by other means, e.g.~as in Corollary \ref{co:S3} below.  In preparation we note that Lemma \ref{lem:GS} applies here, yielding:

\begin{prop}
\label{pro:Lmin}
If $L$ is a minimal $\L$-class of a semigroup $S$, then
$\Cong({}^S\overline{L})\cong \Cong({}^S\!L)^\top$, and hence
\[
\Ht(\Cong({}^S\overline{L}))=\Ht(\Cong({}^S\!L))+1.  \epfreseq
\]
\end{prop}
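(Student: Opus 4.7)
The plan is to reduce this directly to Lemma \ref{lem:GS}, applied to the left $S$-act $A:={}^S\!L$ viewed as a unary algebra. The paper has already flagged in the paragraph preceding the proposition that ${}^S\!L$ is itself an $S$-act whenever $L$ is a minimal $\L$-class, so the whole task is to verify that the hypotheses of Lemma \ref{lem:GS} hold in this context, and that the resulting principal factor is literally ${}^S\overline L$.

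First I would justify that ${}^S\!L$ is a well-defined left $S$-act (equivalently a subact of ${}^S\!S$): for any $s\in S$ and $x\in L$, we have $sx\leqL x$, and minimality of the $\L$-class $L$ forces $sx\in L$. Hence $L$ is closed under the action, and ${}^S\!L$ is a unary algebra with no constants (its signature consists only of the unary maps $\lambda_s$ for $s\in S$).

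Next I would identify the $\G$-classes of ${}^S\!L$. Given $x,y\in L$, the $\L$-relation in $S$ supplies $s,t\in S^1$ with $y=sx$ and $x=ty$, so that $y\in\la x\ra$ and $x\in\la y\ra$ in ${}^S\!L$. Thus ${}^S\!L$ consists of the single $\G$-class $L$, which is trivially a subalgebra of itself, so the hypotheses of Lemma \ref{lem:GS} are met with $G=A={}^S\!L$.

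Finally I would check that the principal factor $\overline L$ in the sense of Section \ref{sect:algebras}, formed inside the algebra ${}^S\!L$, coincides with the act ${}^S\overline L$ from Section \ref{sect:left1}. Since the $S$-action never exits $L$, the exceptional branch in the definition of $\overline{\lambda_s}$ is never triggered for $x\in L$, so $\overline{\lambda_s}(x)=sx$ and $\overline{\lambda_s}(0)=0$, which is exactly the action on ${}^S\overline L$. Lemma \ref{lem:GS} then delivers both $\Cong({}^S\overline L)\cong\Cong({}^S\!L)^\top$ and $\Ht(\Cong({}^S\overline L))=\Ht(\Cong({}^S\!L))+1$. The proof is essentially bookkeeping; there is no genuine obstacle beyond confirming that the two constructions of `$\overline L$' agree, which is immediate from the minimality hypothesis on $L$.
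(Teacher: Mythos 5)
Your proposal is correct and follows essentially the paper's route: the paper also obtains this result as a direct application of Lemma \ref{lem:GS}, using that minimality of $L$ makes it a $\G$-class of the (constant-free) unary algebra ${}^S\!S$ which is also a subalgebra, with principal factor ${}^S\overline{L}$. Your only cosmetic deviation is applying Lemma \ref{lem:GS} with ambient algebra ${}^S\!L$ rather than ${}^S\!S$, which requires the same verifications and yields the same conclusion.
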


Combining Theorem \ref{thm:S2} with Propositions \ref{pr:sep} and \ref{pro:Lmin} yields the following:

\begin{thm}\label{thm:S3}
Let $S$ be a semigroup with finitely many $\L$-classes, and
let $L_1,\dots, L_n$ be representative $\L$-classes for the $\D$-classes of $S$, with $L_1$ a minimal $\L$-class.
For each $r=1,\dots,n$, let $m_r$ be the number of $\L$-classes in the $\D$-class of $L_r$,  and let $\Gamma_r$ be the 
Sch\"{u}tzenberger group  of any $\H$-class of $L_r$. If $L_2,\dots, L_n$ are all $\H$-separable then
\[
\Ht(\LCong(S))=m_1 \Ht(\Cong({}^S\!L_1)) +\sum_{r=2}^n m_r\Ht(\Sub(\Gamma_r)).  \epfreseq
\]
\end{thm}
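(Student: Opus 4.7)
The plan is to assemble the three results cited just above the theorem, namely Theorem \ref{thm:S2}, Proposition \ref{pr:sep}, and Proposition \ref{pro:Lmin}. The starting point is Theorem \ref{thm:S2}, which already provides the decomposition
\[
\Ht(\LCong(S)) = \sum_{r=1}^n m_r\bigl(\Ht(\Cong({}^S\overline{L}_r)) - 1\bigr),
\]
so the work reduces to evaluating $\Ht(\Cong({}^S\overline{L}_r)) - 1$ for each $r$, applying the appropriate hypothesis to the representative $L_r$.

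For the $r=1$ summand, the minimality of $L_1$ allows a direct application of Proposition \ref{pro:Lmin}, which gives $\Ht(\Cong({}^S\overline{L}_1)) = \Ht(\Cong({}^S\!L_1)) + 1$; the contribution is then precisely $m_1 \Ht(\Cong({}^S\!L_1))$, matching the first term of the claimed formula. For each $r \in \{2,\dots,n\}$, the assumption that $L_r$ is $\H$-separable permits invoking the height equality in the concluding statement of Proposition \ref{pr:sep}, which yields $\Ht(\Cong({}^S\overline{L}_r)) = \Ht(\Sub(\Gamma_r)) + 1$, so the contribution is $m_r \Ht(\Sub(\Gamma_r))$. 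Summing these contributions over $r = 2, \ldots, n$ produces the remaining terms.

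Because the argument is effectively a one-line substitution, no serious obstacles are anticipated. The only points worth a passing remark are that a minimal $\L$-class exists to serve as $L_1$ (since there are only finitely many $\L$-classes), and that $\Gamma_r$ depends on a choice of $\H$-class within $L_r$; but, as recalled in Section \ref{subsect:H}, all Sch\"utzenberger groups attached to $\H$-classes in a common $\D$-class are isomorphic, so $\Ht(\Sub(\Gamma_r))$ is independent of this choice and the formula is unambiguous.
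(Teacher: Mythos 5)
Your proposal is correct and is exactly the paper's argument: the paper proves Theorem \ref{thm:S3} precisely by combining Theorem \ref{thm:S2} with Propositions \ref{pr:sep} and \ref{pro:Lmin} in the manner you describe. The substitutions $\Ht(\Cong({}^S\overline{L}_1)) = \Ht(\Cong({}^S\!L_1)) + 1$ and $\Ht(\Cong({}^S\overline{L}_r)) = \Ht(\Sub(\Gamma_r)) + 1$ for $r\geq 2$ are the whole content, and your closing remarks on well-definedness are accurate.
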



\JEnew{In all our applications, the value of $ \Ht(\Cong({}^S\!L_1))$ will be ascertained by noting
that either~$L_1$ is actually an $\H$-class (i.e.\ that ${}^S \overline{L}_1$ is also separable)} 
or that $\H_{L_1}$ is the unique maximal  congruence on ${}^S\!L_1$. 
We therefore record the resulting formulae in these two cases:


\begin{cor}
\label{co:S3}
Suppose that all the assumptions of Theorem \ref{thm:S3} are satisfied.
\begin{thmenumerate}
\item \label{it:S31}
If $L_1$ is an $\H$-class then
\[
\Ht(\LCong(S))=\sum_{r=1}^n m_r\Ht(\Sub(\Gamma_r)).
\]
\item  \label{it:S32}
If $\H_{L_1}$ is the unique maximal  congruence on ${}^S\!L_1$ then
\[
\Ht(\LCong(S))=m_1+\sum_{r=1}^n m_r\Ht(\Sub(\Gamma_r)).  \epfreseq
\]
\end{thmenumerate}
\end{cor}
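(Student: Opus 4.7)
The plan is to specialise Theorem \ref{thm:S3} in each of the two scenarios. Since the sum $\sum_{r=2}^n m_r\Ht(\Sub(\Gamma_r))$ already appears on the right-hand side in both statements of the corollary (modulo the $r=1$ term), the entire task reduces to identifying $\Ht(\Cong({}^S\!L_1))$ under the respective hypotheses, and then substituting.

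For part \ref{it:S31}, the assumption that $L_1$ is itself a single $\H$-class makes condition \ref{it:sep3} of Proposition~\ref{pr:sep} hold vacuously: there are no pairs $x,y\in L_1$ with $(x,y)\notin\H$. Hence Proposition~\ref{pr:sep} yields $\Cong({}^S\overline{L}_1)\cong\Sub(\Gamma_1)^\top$. Since $L_1$ is moreover minimal, Proposition~\ref{pro:Lmin} provides the isomorphism $\Cong({}^S\overline{L}_1)\cong\Cong({}^S\!L_1)^\top$. Matching the two descriptions and cancelling the adjoined top element yields $\Cong({}^S\!L_1)\cong\Sub(\Gamma_1)$, so $\Ht(\Cong({}^S\!L_1))=\Ht(\Sub(\Gamma_1))$; plugging into Theorem~\ref{thm:S3} completes the case.

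For part \ref{it:S32}, the hypothesis is that $\H_{L_1}$ is the unique maximal congruence of ${}^S\!L_1$, which directly rewrites $\Cong({}^S\!L_1)$ as the disjoint union $[\Delta_{L_1},\H_{L_1}]\cup\{\nabla_{L_1}\}$, so $\Ht(\Cong({}^S\!L_1))=\Ht([\Delta_{L_1},\H_{L_1}])+1$. I would then identify the interval $[\Delta_{L_1},\H_{L_1}]$ with $[\Delta_{\overline{L}_1},\H_{\overline{L}_1}]$ via the map $\sigma\mapsto\sigma\cup\{(0,0)\}$, and invoke Proposition~\ref{pr:Hrest} to see that the latter is isomorphic to $\Sub(\Gamma_1)$. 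This gives $\Ht(\Cong({}^S\!L_1))=\Ht(\Sub(\Gamma_1))+1$, and substitution into Theorem~\ref{thm:S3} delivers the desired formula $m_1+\sum_{r=1}^n m_r\Ht(\Sub(\Gamma_r))$.

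The only point requiring (mild) verification is that $\sigma\mapsto\sigma\cup\{(0,0)\}$ is a well-defined lattice isomorphism between the two intervals. Well-definedness relies on the fact that $L_1$ is a minimal $\L$-class, so $sx\in L_1$ whenever $x\in L_1$ and $s\in S$; consequently the symbol $0$ never appears in any action calculation, and the extension of an $S$-act congruence on $L_1$ by the pair $(0,0)$ is automatically compatible on $\overline{L}_1$. Order-preservation in both directions and surjectivity (every congruence on $\overline{L}_1$ contained in $\H_{\overline{L}_1}$ isolates $0$ in a singleton class, by definition of $\H_{\overline{L}_1}$) are immediate. This step is the most involved piece of bookkeeping, but it is routine rather than a real obstacle.
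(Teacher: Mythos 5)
Your proof is correct and matches the paper's intended argument: the corollary is stated without an explicit proof, being an immediate consequence of Theorem \ref{thm:S3} combined with Propositions \ref{pr:sep}, \ref{pro:Lmin} and \ref{pr:Hrest}, which is exactly the chain you assemble (case (i) via vacuous $\H$-separability of $L_1$, case (ii) via the interval $[\Delta_{\overline{L}_1},\H_{\overline{L}_1}]\cong\Sub(\Gamma_1)$). The only point worth noting is that reading ``unique maximal congruence'' as ``every proper congruence lies below $\H_{L_1}$'' is precisely how the paper itself uses the phrase (cf.\ the proof of Proposition \ref{pr:sep} and the verifications in Section \ref{sect:applications}), so your part (ii) is in step with the text.
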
 

\section{Right congruences}\label{sect:right}

All the results of Sections \ref{sect:left1} and \ref{sect:left2} have left-right duals, concerning the right congruences of a semigroup.
For future reference we collect here the main conclusions, giving formulae for the height of the lattice of right congruences.

For an $\R$-class $R$ in a semigroup $S$, its
principal factor $\overline{R}{}^S$ is the right $S$-act
\[
\ol R = R\cup\{0\},
\qquad\text{with action}\qquad x\cdot s  = \begin{cases}
xs &\text{if $x,xs\in R$,}\\
0 &\text{otherwise}.
\end{cases}
\]
If $R$ is a minimal $\R$-class, then $R$ itself is a right $S$-act, denoted $R^S$, and this is a subact of~$\overline{R}{}^S$.

\begin{thm}[cf.~Theorem \ref{thm:S2}] \label{thm:Sr2}
Let $S$ be a semigroup with finitely many $\R$-classes,
let $R_1,\dots, R_n$ be representative $\R$-classes for the $\D$-classes of $S$, and let
$m_r$ be the number of $\R$-classes in the $\D$-class of $R_r$. Then
\[
\Ht(\RCong(S)) = \sum_{r=1}^n m_r \cdot (\Ht(\Cong(\ol R{}^S_r)) - 1).  \epfreseq
\]
\end{thm}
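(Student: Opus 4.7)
The plan is to establish this via the left–right duality that has been implicit throughout Sections \ref{sect:algebras}–\ref{sect:left2}. The key observation is that all the machinery from Sections~\ref{sect:unary} and \ref{sect:left1} was developed at the level of unary algebras, and the results specialised to left $S$-acts in Section \ref{sect:left1} use only the fact that ${}^S\!S$ is a unary algebra with $\G$-equivalence equal to $\L$ (Lemma \ref{la:SSS}). By Lemma \ref{la:SSS}\ref{it:SSS1}--\ref{it:SSS2}, the right $S$-act $S^S$ is also a unary algebra, with $\RCong(S) = \Cong(S^S)$ and whose $\G$-equivalence is Green's relation $\R$. So all of the arguments apply verbatim, with $\L$ replaced by $\R$ and the principal factor ${}^S\ol L$ replaced by $\ol R{}^S$.

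First, I would state and prove the dual of Theorem \ref{thm:S}: if $S$ has finitely many $\R$-classes $R_1,\ldots,R_k$, then
\[
\Ht(\RCong(S)) = \sum_{r=1}^{k} \Ht(\Cong(\ol R{}^S_r)) - k.
\]
This is immediate from Theorem \ref{thm:G} applied to the unary algebra $S^S$, whose $\G$-classes are the $\R$-classes of $S$ and whose principal factors (as per the construction before Lemma \ref{lem:a0}) are precisely the right $S$-acts $\ol R{}^S_r$.

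Next, I would establish the dual of Lemma \ref{lem:D}: if $R_1$ and $R_2$ are $\R$-classes lying in the same $\D$-class, then $\ol R{}^S_1 \cong \ol R{}^S_2$ as right $S$-acts. Here one applies the left-handed form of Green's Lemma to obtain $a \in S^1$ such that $R_1 \to R_2;\ x \mt ax$ is an $\L$-preserving bijection, extends it by $0 \mt 0$, and verifies it is a morphism using the left-right dual of Lemma~\ref{lem:stab}\ref{stab1}. The case analysis is identical to that in the proof of Lemma \ref{lem:D}.

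Finally, grouping the $\R$-classes by $\D$-class as in the proof of Theorem \ref{thm:S2}, we compute
\begin{align*}
\Ht(\RCong(S)) &= \sum_{r=1}^{n}\sum_{j=1}^{m_r}\Ht(\Cong(\ol R{}^S_{rj})) - \sum_{r=1}^{n} m_r \\
&= \sum_{r=1}^{n} m_r \Ht(\Cong(\ol R{}^S_r)) - \sum_{r=1}^{n} m_r
= \sum_{r=1}^{n} m_r \bigl(\Ht(\Cong(\ol R{}^S_r)) - 1\bigr),
\end{align*}
completing the proof. There is no real obstacle here; the only thing to be vigilant about is the symmetric use of Green's Lemma and stability (Lemma \ref{lem:stab}) in the right-handed setting, which the excerpt has already anticipated by remarking that both formulations of these results are freely invoked.
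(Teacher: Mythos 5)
Your proposal is correct and matches the paper's intent exactly: the paper states Theorem \ref{thm:Sr2} without proof, as an immediate left--right dual of Theorem \ref{thm:S2}, relying on the fact that all the machinery of Sections \ref{sect:unary} and \ref{sect:left1} applies verbatim to the right $S$-act $S^S$ whose $\G$-classes are the $\R$-classes. Your explicit dualisation of Theorem \ref{thm:S} and Lemma \ref{lem:D}, followed by the same grouping computation, is precisely the argument the paper leaves implicit.
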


The property of $\H$-separability for an $\R$-class $R$ is to satisfy one, and hence all, of the conditions dual to~\ref{it:sep1}--\ref{it:sep3}
from Proposition \ref{pr:sep}. 
\JErev{Dually to Proposition \ref{prop:sep}, when the $\D$-class $D$ containing $R$ is stable and regular, $R$ is $\H$-separable if and only if $D$ is column-faithful.}

\begin{thm}[cf.~Theorem \ref{thm:S3}] \label{thm:Sr3}
Let $S$ be a semigroup with finitely many $\R$-classes, and
let $R_1,\dots, R_n$ be representative $\R$-classes for the $\D$-classes of $S$, with $R_1$ a minimal $\R$-class.
For each $r=1,\dots,n$, let $m_r$ be the number of $\R$-classes in the $\D$-class of $R_r$,  and let $\Gamma_r$ be the 
Sch\"{u}tzenberger group  of any $\H$-class of $R_r$. If $R_2,\dots, R_n$ are all $\H$-separable then
\[
\Ht(\RCong(S))=m_1 \Ht(\Cong(R_1^S)) +\sum_{r=2}^n m_r\Ht(\Sub(\Gamma_r)).  \epfreseq
\]
\end{thm}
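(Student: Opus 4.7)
The plan is to prove Theorem \ref{thm:Sr3} by combining the right-handed analogues of Theorem \ref{thm:S2}, Proposition \ref{pr:sep} and Proposition \ref{pro:Lmin}, in direct parallel with the proof of Theorem \ref{thm:S3}. Since everything in Sections \ref{sect:left1}--\ref{sect:left2} was set up for left $S$-acts but is formally symmetric, each ingredient has a left-right dual that can be invoked without re-proof; of these the dual of Theorem \ref{thm:S2} is already recorded as Theorem \ref{thm:Sr2}.

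The key steps, in order, are as follows. First, apply Theorem \ref{thm:Sr2} to obtain
\[
\Ht(\RCong(S)) = \sum_{r=1}^n m_r \bigl(\Ht(\Cong(\ol R{}^S_r)) - 1\bigr).
\]
Next, for each $r\in\{2,\dots,n\}$, use the $\H$-separability of $R_r$ (via the dual of Proposition \ref{pr:sep}) to conclude that
\[
\Ht(\Cong(\ol R{}^S_r)) = \Ht(\Sub(\Ga_r)) + 1.
\]
Finally, for $r=1$, since $R_1$ is a minimal $\R$-class the dual of Proposition \ref{pro:Lmin} gives
\[
\Ht(\Cong(\ol R{}^S_1)) = \Ht(\Cong(R_1^S)) + 1.
\]
Substituting these two evaluations into the sum produced by Theorem \ref{thm:Sr2} cancels the ``$-1$'' terms inside each bracket, yielding the stated formula.

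There is no real obstacle beyond bookkeeping: the argument is a mechanical dualisation. The one point that warrants care is verifying that the quoted duals are genuinely available, in particular that the dual of Proposition \ref{pr:sep} applies to an $\R$-class $R_r$ in the same way the original applies to an $\L$-class, so that the identification $\Cong(\ol R{}^S_r) \cong \Sub(\Ga_r)^\top$ for $r\geq 2$ really does follow from $\H$-separability. This is a consequence of the fact that the three equivalent conditions defining $\H$-separability, the argument via $\H_{\ol R}$ as a congruence of $\ol R{}^S$, and the description of $[\De_{\ol R},\H_{\ol R}]$ in terms of the Sch\"utzenberger group, are all manifestly symmetric under the left-right interchange of Section \ref{sect:prelim}.
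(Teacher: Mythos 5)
Your proposal is correct and follows essentially the same route as the paper: Theorem \ref{thm:Sr3} is stated there as the left-right dual of Theorem \ref{thm:S3}, whose proof is exactly the combination of Theorem \ref{thm:S2} (dually, Theorem \ref{thm:Sr2}) with Propositions \ref{pr:sep} and \ref{pro:Lmin}, and your substitution and cancellation of the $-1$ terms matches that argument precisely.
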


\begin{cor}[cf.~Corollary \ref{co:S3}]
\label{co:Sr3}
Suppose that all the assumptions of Theorem \ref{thm:Sr3} are satisfied.
\begin{thmenumerate}
\item \label{it:Sr31}
If $R_1$ is an $\H$-class then
\[
\Ht(\RCong(S))=\sum_{r=1}^n m_r\Ht(\Sub(\Gamma_r)).
\]
\item \label{it:Sr32}
If $\H_{R_1}$ is the unique maximal  congruence on $R_1^S$ then
\[
\Ht(\RCong(S))=m_1+\sum_{r=1}^n m_r\Ht(\Sub(\Gamma_r)).  \epfreseq
\]
\end{thmenumerate}
\end{cor}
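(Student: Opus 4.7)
The plan is to follow the template of Corollary \ref{co:S3}, of which the present statement is the right-sided dual. Theorem~\ref{thm:Sr3} already provides the master formula
\[
\Ht(\RCong(S)) = m_1\Ht(\Cong(R_1^S)) + \sum_{r=2}^n m_r\Ht(\Sub(\Gamma_r)),
\]
so it suffices to determine $\Ht(\Cong(R_1^S))$ in each case: the claim is that it equals $\Ht(\Sub(\Gamma_1))$ in part~\ref{it:Sr31} and $\Ht(\Sub(\Gamma_1))+1$ in part~\ref{it:Sr32}.

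For part~\ref{it:Sr32}, the hypothesis immediately yields $\Cong(R_1^S) = [\Delta_{R_1},\H_{R_1}] \cup \{\nabla_{R_1}\}$. Adapting the right-sided dual of Proposition~\ref{pr:Hrest} from $\overline{R}_1^S$ to the action on $R_1^S$ itself identifies $[\Delta_{R_1},\H_{R_1}]$ with $\Sub(\Gamma_1)$; this adaptation is cosmetic, since in the proof of Proposition~\ref{pr:Hrest} the adjoined zero serves only to distinguish congruences sitting below $\H_{\overline{L}}$ from the universal one, a distinction which in our setting is provided by the hypothesis of maximality. Hence $\Cong(R_1^S) \cong \Sub(\Gamma_1)^\top$ and the formula follows.

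For part~\ref{it:Sr31}, when $R_1$ is itself an $\H$-class we have $\H_{\overline{R}_1} = \nabla_{R_1}\cup\{(0,0)\}$, and by Lemma~\ref{lem:a0} any congruence of $\overline{R}_1^S$ not contained in $\H_{\overline{R}_1}$ must equal $\nabla_{\overline{R}_1}$. Thus $\H_{\overline{R}_1}$ is the unique maximal congruence on $\overline{R}_1^S$, so the right-sided dual of Proposition~\ref{pr:sep} gives $\Cong(\overline{R}_1^S) \cong \Sub(\Gamma_1)^\top$, and the right-sided dual of Proposition~\ref{pro:Lmin} then yields $\Ht(\Cong(R_1^S)) = \Ht(\Cong(\overline{R}_1^S))-1 = \Ht(\Sub(\Gamma_1))$. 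Substituting both values into the Theorem~\ref{thm:Sr3} formula completes the proof; the only even slightly subtle step is the adaptation of Proposition~\ref{pr:Hrest} in part~\ref{it:Sr32}, which is of the same character as the other dualisations already invoked throughout Section~\ref{sect:right}.
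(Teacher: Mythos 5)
Your proof is correct and follows essentially the same route as the paper, which states Corollary \ref{co:Sr3} as the left--right dual of Corollary \ref{co:S3} and justifies the latter exactly by evaluating $\Ht(\Cong({}^S\!L_1))$ via Propositions \ref{pr:Hrest}, \ref{pr:sep} and \ref{pro:Lmin} in the two cases. The only cosmetic difference is that in part (ii) your ``adaptation'' of Proposition \ref{pr:Hrest} to $R_1^S$ is more cleanly obtained by noting that the isomorphism $\Cong(\ol R{}_1^S)\cong\Cong(R_1^S)^\top$ of (the dual of) Proposition \ref{pro:Lmin} restricts to an isomorphism $[\Delta_{\ol R_1},\H_{\ol R_1}]\cong[\Delta_{R_1},\H_{R_1}]$, whence the dual of Proposition \ref{pr:Hrest} applies verbatim.
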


\section{Two-sided congruences}\label{sect:two}

We now prove analogous results concerning two-sided congruences.  These are not immediate corollaries of the results of Sections \ref{sect:left1}--\ref{sect:right}, although we can use some of those results in the proofs here.
The development will be broadly along the same lines as in Sections \ref{sect:left1} and \ref{sect:left2}.
Proofs are only given  if they are substantially different from the one-sided case.

For a $\J$-class $J$ in a semigroup $S$, its
principal factor ${}^S\overline{J}{}^S$ is the $(S,S)$-biact
\[
\ol J = J\cup\{0\},\qquad\text{with actions}\qquad
s\cdot x  = \begin{cases}
sx &\text{if $x,sx\in J$}\\
0 &\text{otherwise,}
\end{cases}
\qquad x\cdot s  = \begin{cases}
xs &\text{if $x,xs\in J$}\\
0 &\text{otherwise}.
\end{cases}
\]

\begin{thm}[cf.~Theorem \ref{thm:S}] \label{thm:SJ}
If $S$ is a semigroup with finitely many 
$\J$-classes, $J_1,\ldots,J_n$, then
\[
\Ht(\Cong(S)) = \sum_{r=1}^n \Ht(\Cong({}^S\ol J{}^S_r{})) - n.\epfreseq
\]
\end{thm}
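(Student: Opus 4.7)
The plan is to mirror the (implicit) proof of Theorem \ref{thm:S} in the two-sided setting. The two-sided congruences of $S$ are exactly the congruences of $S$ viewed as the $(S,S)$-biact ${}^S\!S^S$ (Lemma \ref{la:SSS}\ref{it:SSS1}), and this biact is itself a unary algebra: its basic operations are the left multiplications $\lambda_s$ and right multiplications $\rho_s$ for each $s\in S$. So the proof reduces to an application of Theorem \ref{thm:G} to this unary algebra.

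First, I would invoke Lemma \ref{la:SSS}\ref{it:SSS2} to identify the $\G$-equivalence of the unary algebra ${}^S\!S^S$ with Green's $\J$-relation on $S$. Under the assumption of the theorem there are $n$ such $\G$-classes, namely $J_1,\ldots,J_n$, so ${}^S\!S^S$ has finitely many $\G$-classes and Theorem \ref{thm:G} applies.

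Next, I would verify that the principal factor ${}^S\overline J_r^S$ defined immediately before the theorem statement coincides with the principal factor of the $\G$-class $J_r$ in the universal-algebra sense of Section \ref{sect:algebras}. This is a direct unravelling of definitions: for each $s\in S$ and $x\in\overline J_r$, the operation $\overline{\lambda_s}(x)$ given by the general recipe equals $sx$ whenever both $x$ and $sx$ lie in $J_r$, and equals $0$ otherwise, which matches the left biact action on ${}^S\overline J_r^S$; the check for $\overline{\rho_s}$ is identical. With this identification in hand, Theorem \ref{thm:G} yields
\[
\Ht(\Cong({}^S\!S^S))=\sum_{r=1}^n \Ht(\Cong({}^S\overline J_r^S))-n,
\]
and the left-hand side equals $\Ht(\Cong(S))$ by Lemma \ref{la:SSS}\ref{it:SSS1}.

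There is essentially no obstacle: the entire content of the theorem is packaged into Theorem \ref{thm:G}, and the whole argument is the correct choice of unary-algebra interpretation together with a definitional check that the two notions of principal factor agree. Consistent with the author's comment at the start of the section that proofs are only supplied when substantially different from the one-sided case, I expect the proof in the paper to be no more than a single sentence citing Theorem \ref{thm:G} and Lemma \ref{la:SSS}.
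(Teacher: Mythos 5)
Your proposal is correct and matches the paper's (implicit) argument exactly: the paper states Theorem \ref{thm:SJ} without proof precisely because, as for Theorem \ref{thm:S}, it follows by combining Theorem \ref{thm:G} with both parts of Lemma \ref{la:SSS}, viewing ${}^S\!S^S$ as a unary algebra whose $\G$-classes are the $\J$-classes and whose principal factors are the biacts ${}^S\ol J{}^S_r$. Your definitional check that the two notions of principal factor agree is the only content needed, and your expectation about the brevity of the paper's treatment is accurate.
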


Now consider an arbitrary $\L$-class $L$ contained in $J$.
Notice that ${}^S\overline{J}{}^S$ contains the underlying set $L\cup\{0\}$ of the left $S$-act ${}^S\overline{L}$.
However, this set may or may not be a left subact of ${}^S\overline{J}$ (the latter is the left $S$-act obtained by `forgetting' the right action on ${}^S\ol J{}^S$).  Similar comments apply to the right $S$-act $\ol R{}^S$.

\begin{prop}
\label{pro:stacts}
A $\J$-class $J$ of a semigroup $S$ is stable if and only if the following two dual  conditions are satisfied:
\begin{thmenumerate}
\item \label{stacts1}
${}^S\overline{L}$ is a subact of  ${}^S\overline{J}$ for every $\L$-class $L\subseteq J$; and 
\item \label{stacts2}
$\overline{R}{}^S$ is a subact of  $\overline{J}{}^S$ for every $\R$-class $R\subseteq J$.
\end{thmenumerate}
\end{prop}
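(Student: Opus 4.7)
The plan is to unpack each of the subact conditions \ref{stacts1} and \ref{stacts2} into an elementwise statement in the language of Green's relations, and then check that their conjunction is literally the definition of stability of $J$.

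First I would spell out what it means for ${}^S\overline{L}$ (with $L\subseteq J$ an $\L$-class) to be a subact of ${}^S\overline{J}$. Both acts share the underlying subset $L\cup\{0\}\subseteq J\cup\{0\}$ with a common zero, and the subact requirement is that $L\cup\{0\}$ is closed under each $\lambda_s$ with respect to the action in ${}^S\overline{J}$. For $x\in L$ and $s\in S$, the element $s\cdot x$ computed in ${}^S\overline{J}$ equals $sx$ when $sx\in J$ and equals $0$ otherwise. Hence closure boils down to the implication $sx\in J \Rightarrow sx\in L$ for all $x\in L$ and $s\in S$. Ranging $L$ over all $\L$-classes contained in $J$, and using that $J$ is partitioned by them, this reduces to: for every $x\in J$ and every $s\in S$, $sx\rJ x \Rightarrow sx\rL x$. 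The $s=1$ case being trivial, this is exactly the first clause of the stability definition (stated originally with $s\in S^1$). A completely symmetric argument, working with the right act $\overline{R}{}^S$ as a subact of $\overline{J}{}^S$, gives the analogous equivalence between \ref{stacts2} and the second clause of stability, namely $x\rJ xs\Rightarrow x\rR xs$ for all $x\in J$, $s\in S^1$.

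Combining the two equivalences yields the biconditional in the statement. I do not anticipate any substantive obstacle: the argument is essentially a definition-chase through the definitions of principal factor, subact, and stability. The only minor point requiring care is to note that in the principal factor ${}^S\overline{J}$, the auxiliary symbol $0$ genuinely records the event of leaving the $\J$-class $J$ (and not of moving within $J$ but across $\L$-classes); this is precisely what distinguishes stability from other behaviours and is what the two subact conditions isolate.
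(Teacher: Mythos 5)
Your proposal is correct and follows essentially the same route as the paper's proof: both arguments unpack the subact condition via the definition of the principal factor action, reducing it to the implication $sx\in J\Rightarrow sx\in L_x$ (and its dual), which is precisely the definition of stability. Your clause-by-clause equivalence is just a slightly more streamlined packaging of the paper's two separate implications.
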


\begin{proof}
\firstpfitem{($\Rightarrow$)}
To show \ref{stacts1},
let $x\in L$ and $s\in S$. If $sx\in L$ then $s\cdot x=sx$ in both ${}^S\overline{L}$ and ${}^S\overline{J}$.
Otherwise, if $sx\not\in L$, then $sx\not\in J$ by stability, and hence $s\cdot x=0$ in both acts.
Condition~\ref{stacts2} is verified analogously.

\pfitem{($\Leftarrow$)}
Let $x\in J$ and $s\in S$. We will show that $sx\in J$ implies $sx\in L$, where $L$ is the $\L$-class of~$x$.
\NRrev{Suppose $sx\in J$. So $s\cdot x=sx$ in ${}^S\overline{J}$. But then $s\cdot x=sx$ in ${}^S\overline{L}$, i.e.~$sx\in L$.
The other half of the definition of stability is checked analogously.}
\end{proof}

Thus, in the presence of stability, given $s\in S$ and $x\in \overline{J}$ we can unambiguously write $s\cdot x$
 for the result of the action of $s$ on $x$ in ${}^S\overline{J}{}^S$ and in ${}^S\overline{L}_x$, and similarly for $x\cdot s$.

In what follows we collect some results concerning the individual acts ${}^S\overline{J}{}^S$.
For this we will need to additionally assume that $J$ is stable.
Throughout  $\H_{\ol J}$ will stand for the relation $\H\restr_J\cup\{(0,0)\}$.

\begin{lemma}[cf.~Lemma \ref{lem:L}]\label{lem:J}
If $J$ is a stable $\J$-class of a semigroup $S$, then $\H_{\ol J} \in \Cong({}^S\ol J{}^S)$.
\end{lemma}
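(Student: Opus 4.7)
The plan is to reduce the two-sided claim to the one-sided Lemma \ref{lem:L} (and its right-sided dual), using stability via Proposition \ref{pro:stacts}.

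First I would observe that $\H_{\overline{J}}$ is obviously an equivalence relation on $\overline{J}$, so the only thing to verify is compatibility with both the left and right actions of $S$. By symmetry it suffices to treat the left action; compatibility with the right action is then entirely analogous using the dual of Lemma \ref{lem:L} together with condition \ref{stacts2} of Proposition \ref{pro:stacts}.

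For left compatibility, fix $(x,y)\in \H_{\overline{J}}$ and $s\in S$, and show $(s\cdot x,s\cdot y)\in \H_{\overline{J}}$. The case $x=y=0$ is immediate since $s\cdot 0=0$. Otherwise $x,y\in J$ with $x\rH y$, so in particular $x\rL y$, i.e.\ $x$ and $y$ lie in a common $\L$-class $L\subseteq J$. Here is where stability of $J$ does the essential work: by Proposition \ref{pro:stacts}\ref{stacts1}, ${}^S\overline{L}$ is a subact of ${}^S\overline{J}$, so the value of $s\cdot x$ (and of $s\cdot y$) computed inside the biact ${}^S\overline{J}{}^S$ agrees with the value computed inside the left $S$-act ${}^S\overline{L}$. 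Applying Lemma \ref{lem:L} to $L$ tells us that $\H_{\overline{L}}$ is a left congruence on ${}^S\overline{L}$, so $(s\cdot x,s\cdot y)\in \H_{\overline{L}}\subseteq \H_{\overline{J}}$, as required.

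There is no substantive obstacle beyond remembering to invoke stability at the right moment: without it, $s\cdot x$ computed in ${}^S\overline{J}{}^S$ could fail to equal $s\cdot x$ computed in ${}^S\overline{L}$ (the element $sx$ might lie in $J\smt L$, giving $s\cdot x=sx$ in the biact but $s\cdot x=0$ in the $\L$-class principal factor), and the reduction to Lemma \ref{lem:L} would break down. Proposition \ref{pro:stacts} is tailor-made to eliminate exactly this issue, so the proof is essentially a two-line assembly of that proposition with the one-sided lemma.
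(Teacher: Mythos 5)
Your proof is correct and follows essentially the same route as the paper: the paper likewise notes that $\H_{\ol J}$ is the union of the relations $\H_{\ol L}$ over the $\L$-classes $L\subseteq J$, invokes Lemma \ref{lem:L} for left-compatibility of each $\H_{\ol L}$, and uses Proposition \ref{pro:stacts} (i.e.\ stability) to transfer this to ${}^S\ol J{}^S$, with right-compatibility handled dually. Your write-up just makes the role of stability slightly more explicit.
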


\pf
The relation $\H_{\ol J}$ is clearly an equivalence.
It equals the union of all $\H_{\ol L}$, with $L$ an $\L$-class contained in $J$.
By Lemma \ref{lem:L} each $\H_{\ol L}$ is left-compatible, and hence, using Proposition~\ref{pro:stacts},~$\H_{\ol J}$ is left-compatible.
The proof of right-compatibility is dual.
\epf

\begin{prop}[cf.~Proposition \ref{pr:Hrest}] \label{pr:JHrest}
Let $S$ be a semigroup, $J$ a stable $\J$-class of $S$, and $H$ an $\H$-class \JEnew{contained in} $J$.
The mapping
\[
\phi:[\Delta_{\overline{J}},\H_{\overline{J}}]\rightarrow \Cong(H,\star);\ \sigma\mapsto\sigma\restr_{H}
\]
is a lattice isomorphism. Thus the interval $[\Delta_{\overline{J}},\H_{\overline{J}}]$ is isomorphic to the normal subgroup lattice 
$\NSub(\Gamma(H))$ of the Sch\"{u}tzenberger group of $H$.
\end{prop}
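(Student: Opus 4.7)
The plan is to follow the template of Proposition \ref{pr:Hrest}, adapting each step from the one-sided to the two-sided setting. The map $\phi$ must be shown to be well-defined, order-reflecting (hence injective), and surjective. The final assertion that $[\Delta_{\overline{J}},\H_{\overline{J}}]\cong\NSub(\Gamma(H))$ then follows from $(H,\star)\cong\Gamma(H)$ together with Lemma \ref{lem:CongG}.

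Well-definedness is a direct consequence of Proposition \ref{pr:Hrest} and its left-right dual. Let $L$ and $R$ denote the $\L$- and $\R$-classes containing $H$. By stability of $J$ and Proposition \ref{pro:stacts}, ${}^S\overline{L}$ is a left subact of ${}^S\overline{J}$, so $\sigma\restr_{\overline{L}}\in [\Delta_{\overline{L}},\H_{\overline{L}}]$, and Proposition \ref{pr:Hrest} then yields that $\sigma\restr_H$ is left-compatible with $\star$. Right-compatibility follows dually via $\overline{R}^S$, so that $\sigma\restr_H\in\Cong(H,\star)$.

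For order-reflection (and hence injectivity), suppose $\sigma\restr_H\subseteq\sigma'\restr_H$ and let $(x,y)\in\sigma$. Assuming $x,y\in H'$ for some $\H$-class $H'\subseteq J$, we use $\D=\L\circ\R$ and apply Green's Lemma twice to obtain $a,a',b,b'\in S^1$ such that $u\mapsto a\cdot u\cdot b$ and $v\mapsto a'\cdot v\cdot b'$ are mutually inverse bijections between $H$ and $H'$. Then $(a'\cdot x\cdot b',a'\cdot y\cdot b')\in\sigma\restr_H\subseteq\sigma'\restr_H$, and applying the inverse action recovers $(x,y)\in\sigma'$.

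Surjectivity is the heart of the proof and the main obstacle. Given $\tau\in\Cong(H,\star)$, let $\tau^\sharp$ be the congruence of ${}^S\overline{J}^S$ it generates; since $\H_{\overline{J}}$ is a congruence containing $\tau$ (Lemma \ref{lem:J}), we have $\tau^\sharp\subseteq\H_{\overline{J}}$. The standard description of generated biact congruences shows that any $(x,y)\in\tau^\sharp\restr_H$ arises from a chain $x=x_0,x_1,\ldots,x_k=y$ with $(x_i,x_{i+1})=(s_i\cdot u_i\cdot t_i,s_i\cdot v_i\cdot t_i)$ for some $s_i,t_i\in S^1$ and $(u_i,v_i)\in\tau$, and containment in $\H_{\overline{J}}$ forces each $x_i\in H$. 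The technical core is to show $(x_i,x_{i+1})\in\tau$. From $s_iu_it_i\in H\subseteq J$ and stability of $J$ one deduces that $u_it_i\rR u_i$ (because $u_it_i\rJ u_i$ and $u_it_i\leqR u_i$), and that $s_i(u_it_i)\rL u_it_i$; combining these with $s_iu_it_i\in L_H$ forces $u_it_i\in H$. Two successive applications of Lemma \ref{lem:sv}, first for the right action of $t_i$ on $H$ and then for the left action of $s_i$, yield $h'_i,h''_i\in H$ with
\[
x_i=h''_i\star u_i\star h'_i \quad\text{and}\quad x_{i+1}=h''_i\star v_i\star h'_i.
\]
Since $\tau$ is a two-sided congruence on the group $(H,\star)$, $(u_i,v_i)\in\tau$ forces $(x_i,x_{i+1})\in\tau$, and chaining yields $(x,y)\in\tau$, completing the argument.
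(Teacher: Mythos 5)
Your proposal is correct and follows essentially the same route as the paper: well-definedness via Proposition \ref{pr:Hrest} and its dual (through Proposition \ref{pro:stacts}), order-reflection by translating between $\H$-classes, and surjectivity by analysing a generating sequence for $\tau^\sharp$, using stability to trap the intermediate products in $H$ and two applications of Lemma \ref{lem:sv} to rewrite $(s_iu_it_i,s_iv_it_i)$ as $(h''_i\star u_i\star h'_i,h''_i\star v_i\star h'_i)$. The only (immaterial) differences are that the paper proves order-reflection by routing through the intermediate $\H$-class $L_H\cap R_{H'}$ and invoking Proposition \ref{pr:Hrest} and its dual rather than Green's Lemma directly, and in the surjectivity step it brackets the product as $(s_iu_i)t_i$, showing $s_iu_i\in H$, where you bracket as $s_i(u_it_i)$ and show $u_it_i\in H$.
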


\pf
To prove that $\phi$ is well defined, let $\sigma\in [\Delta_{\overline{J}},\H_{\overline{J}}]$.
By Proposition \ref{pro:stacts}, the relation $\si\restr_{\ol L}$ is a congruence on ${}^S\overline{L}$, where $L$ is the $\L$-class containing $H$.
Proposition \ref{pr:Hrest} then implies that $\si\restr_H$ is a left congruence on $(H,\star)$.
Dually, $\si\restr_H$ is a right congruence on $(H,\star)$, \NRrev{hence} it is a congruence.

Continuing to follow the proof of Proposition \ref{pr:Hrest}, we show that $\sigma\subseteq\sigma' \iff \sigma\restr_H\subseteq\sigma'\restr_H$ for all $\sigma,\sigma'\in  [\Delta_{\overline{J}},\H_{\overline{J}}]$.  With only the reverse implication requiring a proof,
suppose $\si\restr_H\sub\si'\restr_H$.  
 Since both $\sigma,\sigma'$ are contained in $\H_{\overline{J}}$ it is sufficient to show
 $\sigma\restr_{H'}\subseteq\sigma'\restr_{H'}$ where $H'$ is an arbitrary $\H$-class in $\J$.
 Let $L$ be the $\L$-class containing $H$, and let $R$ be the $\R$-class containing $H'$.
 Since~$J$ is stable, it is in fact a $\D$-class, and hence $H'':=L\cap R$ is an $\H$-class.
 By Proposition~\ref{pr:Hrest}, we have $\sigma\restr_{\overline{L}}\subseteq \sigma'\restr_{\overline{L}}$, and hence
 in particular $\sigma\restr_{H''}\subseteq \sigma'\restr_{H''}$.
 Now we use the dual of Proposition~\ref{pr:Hrest} (with respect to $H''\subseteq R$), to deduce that
 $\sigma\restr_{\overline{R}}\subseteq \sigma'\restr_{\overline{R}}$, and hence $\sigma\restr_{H'}\subseteq \sigma'\restr_{H'}$,
 as required.

The proof is completed by showing that $\phi$ is onto, for which it suffices to show that  ${\tau^\sharp\restr_H\subseteq\tau}$
for every $\tau\in\Cong(H,\star)$, where $\tau^\sharp$ is the congruence of $\overline{J}$ generated by $\tau$.
For any ${(x,y)\in \tau^\sharp\restr_H}$ \JEnew{we have a sequence
$
x = x_0 , x_1 ,\ldots, x_k = y
$
of elements of $\ol J$ such that for each $0\leq i<k$,}
\[
(x_i,x_{i+1}) = (s_i\cdot u_i\cdot t_i,s_i\cdot v_i\cdot t_i) \qquad\text{for some $s_i,t_i\in S^1$ and $(u_i,v_i)\in\tau$.}
\]
Since each $(x_i,x_{i+1}) \in \tau^\sharp \sub \H_{\ol J}$, and since $x_0 = x\in H$, we have $x_i\in H$ for all $i$.  Note that $(s_i\cdot u_i\cdot t_i,s_i\cdot v_i\cdot t_i) = (s_iu_it_i,s_iv_it_i)$.  
From $u_i,s_iu_i\in J$ and stability it follows that $s_i u_i\mr\L u_i$.
Similarly, from $s_iu_i,s_i u_i t_i \in J$ we have $s_i u_i \mr\R s_i u_i t_i$. But $u_i,s_iu_it_i\in H$, and hence we have $s_iu_i\in H$ as well.
As in the proof of Proposition \ref{pr:Hrest}, we now see that
there
exists $h_i\in H$ such that $s_iu_i=h_i\star u_i$ and  $s_iv_i=h_i\star v_i$.  
Since $s_iu_i,s_iv_i\in H$, a dual argument shows that there exists $h_i'\in H$ such that
$s_iu_it_i=(s_i u_i)\star h_i'$ and  $s_iv_it_i=(s_i v_i)\star h_i'$.
Combining, we obtain 
\[
(x_i,x_{i+1}) = (s_iu_it_i,s_iv_it_i) = (h_i\star u_i\star h_i',h_i\star v_i\star h_i') \in \tau,
\]
from which $x = x_0 \mr\tau x_1 \mr\tau \cdots \mr\tau x_k = y$, as required.
\epf

\begin{cor}[cf.~Corollary \ref{co:LSGa}]
\label{co:JSGa}
Let $J$ be a stable $\J$-class of a semigroup $S$, let $H$ be an $\H$-class in $J$, and let $\Gamma$ be the Sch\"{u}tzenberger group of $H$. Then
\[
\Ht(\Cong({}^S\overline{J}{}^S))\geq \Ht(\NSub(\Gamma))+\Ht(\Cong({}^S\overline{J}{}^S/\H_{\overline{J}}))-1.
\]
If $\H_{\overline{J}}$ is a modular element in $\Cong({}^S\overline{J}{}^S)$ then
\[
\Ht(\Cong({}^S\overline{J}{}^S))= \Ht(\NSub(\Gamma))+\Ht(\Cong({}^S\overline{J}{}^S/\H_{\overline{J}}))-1.
\epfreseq
\]
\end{cor}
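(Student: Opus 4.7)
The plan is to apply the general inequality \eqref{eq:Lxx} and the modularity criterion of Proposition~\ref{pro:srs} to the element $x=\H_{\overline{J}}$ inside the lattice $L=\Cong({}^S\overline{J}{}^S)$, exactly as was done in the one-sided case for Corollary~\ref{co:LSGa}. For this to make sense, I first need to know that $\H_{\overline{J}}$ is actually an element of $L$, which is supplied by Lemma~\ref{lem:J} (this is where the stability hypothesis on $J$ is used).

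Next I would identify the down-set and up-set of $\H_{\overline{J}}$ in $L$. The down-set is the interval $[\Delta_{\overline{J}},\H_{\overline{J}}]$, whose height equals $\Ht(\NSub(\Gamma))$ by Proposition~\ref{pr:JHrest}. The up-set is the interval $[\H_{\overline{J}},\nabla_{\overline{J}}]$, which by the Correspondence Theorem (see \cite[Theorem 6.20]{BS1981}) is isomorphic to $\Cong({}^S\overline{J}{}^S/\H_{\overline{J}})$. Substituting these two heights into \eqref{eq:Lxx} yields the first (inequality) assertion of the corollary.

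For the second assertion, assuming that $\H_{\overline{J}}$ is a modular element of $L$, Proposition~\ref{pro:srs} upgrades the inequality $\Ht(L)\geq \Ht(x^\downarrow)+\Ht(x^\uparrow)-1$ to an equality, and feeding in the same two interval identifications produces the claimed formula. Since every step is a direct appeal to an already-established result, there is no real obstacle to overcome; the content of the corollary lies entirely in Proposition~\ref{pr:JHrest} and the modularity hypothesis, and the proof itself is essentially a one-line assembly.
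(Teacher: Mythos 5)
Your proposal is correct and matches the paper's intended argument exactly: the corollary is obtained by applying \eqref{eq:Lxx} and Proposition~\ref{pro:srs} to the congruence $\H_{\overline{J}}$ (which lies in $\Cong({}^S\overline{J}{}^S)$ by Lemma~\ref{lem:J}, using stability), identifying the down-set with $\NSub(\Gamma)$ via Proposition~\ref{pr:JHrest} and the up-set with $\Cong({}^S\overline{J}{}^S/\H_{\overline{J}})$ via the Correspondence Theorem.
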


Again, $\H$-separability is one way to achieve modularity of $\H_{\overline{J}}$, and this time we have the following:

\begin{prop}[cf.~Proposition \ref{pr:sep}]
\label{pr:Jsep}
\JEnew{For a stable $\J$-class $J$ of a semigroup $S$, the following are equivalent:}
\begin{thmenumerate}
\item
\label{it:Jsep1}
The  $(S,S)$-biact ${}^S\overline{J}{}^S/\H_{\overline{J}}$ is congruence-free. 
\item
\label{it:Jsep2}
$\H_{\overline{J}}$ is the unique maximal  congruence of ${}^S\overline{J}{}^S$.
\item
\label{it:Jsep3}
For all $x,y\in J$ with $(x,y)\not\in\H$ there exist $s,t\in S^1$ such that precisely one of  $sxt$ or $syt$ belongs to $J$.
\item
\label{it:Jsep4}
All $\L$-classes and all $\R$-classes \JEnew{contained in} $J$ are $\H$-separable.
\end{thmenumerate}
When these conditions hold we have $\Cong({}^S\overline{J}{}^S)=[\Delta_{\overline{J}},\H_{\overline{J}}]\cup\{\nabla_{\overline{J}}\} \cong\NSub(\Ga)^\top$, and hence
\[
\Ht(\Cong({}^S\overline{J}{}^S))= \Ht(\NSub(\Gamma))+1,
\]
where $\Ga$ is the Sch\"utzenberger group of any $\H$-class in $J$.
\end{prop}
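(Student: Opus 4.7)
The plan is to follow the template of Proposition \ref{pr:sep}, establishing the cycle (ii)$\Rightarrow$(i)$\Rightarrow$(iii)$\Rightarrow$(ii) in the two-sided setting, and then separately proving (iii)$\Leftrightarrow$(iv). Implication (ii)$\Rightarrow$(i) is immediate from the Correspondence Theorem. For (i)$\Rightarrow$(iii), I would adapt the proof of Proposition \ref{pr:sep}\ref{it:sep1}$\Rightarrow$\ref{it:sep3} essentially verbatim: maximality of $\H_{\ol J}$ means that $\nabla_{\ol J}$ is generated, as a biact congruence, by $\H_{\ol J}\cup\{(x,y)\}$, so a generating sequence from $x$ to $0$ in $\ol J$ must contain a biact step $(s\cdot u\cdot t,\,s\cdot v\cdot t)$ with $(u,v)\in\{(x,y),(y,x)\}$ (since $\H_{\ol J}$ cannot relate a non-zero element to $0$), yielding the required $s,t\in S^1$ with exactly one of $sxt,syt$ in $J$. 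The converse (iii)$\Rightarrow$(ii) mirrors the one-sided proof: for any $(x,y)\not\in\H_{\ol J}$, condition (iii) produces $(sxt,0)=(s\cdot x\cdot t,\,s\cdot y\cdot t)\in(x,y)^\sharp$, and Lemma \ref{lem:a0} then forces $(x,y)^\sharp=\nabla_{\ol J}$.

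For (iv)$\Rightarrow$(iii), take $(x,y)\in J^2\setminus\H$ and split on whether $x\not\L y$ or $x\not\R y$, one of which must hold since $\H=\L\cap\R$. Assume $x\not\L y$; stability makes $J$ a $\D$-class, so I can fix $z\in L_x\cap R_y$ and observe that $(z,y)\in R_y^2\setminus\H$. $\H$-separability of $R_y$ then yields $t\in S^1$ with exactly one of $zt,yt$ in $R_y$, equivalently (by stability) in $J$. Green's Lemma applied to $z\L x$ transfers the separation from $\{z,zt\}$ to $\{x,xt\}$ via the bijection $R_z\to R_x$, so that $s=1$ together with this $t$ witnesses (iii); the case $x\not\R y$ is dual.

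The main obstacle is the reverse direction (iii)$\Rightarrow$(iv), which rests on a careful Green's Lemma argument. Given an $\L$-class $L\subseteq J$ and a pair $(x,y)\in L^2\setminus\H$, condition (iii) supplies $s,t\in S^1$ with (say) $sxt\in J$ and $syt\not\in J$. Standard $\J$-order bookkeeping plus stability then force $sx\in L$. I claim that $sy\not\in L$: if instead $sy\in L$, then $sy\L sx$, so $sy=asx$ for some $a\in S^1$, and since $sxt\in R_{sx}$ by stability, the $\L$-dual of Green's Lemma (the bijection $R_{sx}\to R_{sy};\ u\mapsto au$) places $syt=asxt$ in $R_{sy}\subseteq J$, contradicting $syt\not\in J$. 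Hence the given $s$ already separates $x$ and $y$ at the $\L$-class level, establishing that $L$ is $\H$-separable; the argument for $\R$-classes is dual.

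Once all four conditions are equivalent, the final assertion is read off as follows: (ii) gives $\Cong({}^S\ol J{}^S)=[\De_{\ol J},\H_{\ol J}]\cup\{\nabla_{\ol J}\}$, and Proposition \ref{pr:JHrest} identifies the interval $[\De_{\ol J},\H_{\ol J}]$ with $\NSub(\Gamma)$, so that $\Cong({}^S\ol J{}^S)\cong\NSub(\Gamma)^\top$ and $\Ht(\Cong({}^S\ol J{}^S))=\Ht(\NSub(\Gamma))+1$.
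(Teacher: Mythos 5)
Your proposal is correct and follows essentially the same route as the paper: the equivalence of (i), (ii), (iii) is handled by transplanting the one-sided argument of Proposition \ref{pr:sep} to the biact setting, and (iii)$\Leftrightarrow$(iv) is proved via stability, the interpolating element $z$ in $L_x\cap R_y$, and Green's Lemma, with the final assertion read off from Proposition \ref{pr:JHrest}. The only differences are cosmetic choices of left--right duality (the paper reduces (iv)$\Rightarrow$(iii) to separability of $L_x=L_z$ rather than $R_y=R_z$, and in (iii)$\Rightarrow$(iv) deduces $sy\notin J$ directly from $syt\mathrel{\R}sy$ via Lemma \ref{lem:stab} instead of your contradiction through Green's Lemma), neither of which changes the substance.
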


\pf
The equivalence of \ref{it:Jsep1}, \ref{it:Jsep2} and \ref{it:Jsep3} is analogous to the proof of  Proposition \ref{pr:sep}.

\pfitem{\ref{it:Jsep3}$\implies$\ref{it:Jsep4}} 
It is sufficient to show that an arbitrary $\L$-class $L\subseteq J$ is $\H$-separable, the proof for $\R$-classes being dual.
Let $x,y\in L$ with $(x,y)\not\in\H$. By \ref{it:Jsep3}, and renaming if necessary, there exist $s,t\in S^1$ such that  $sxt\in J$ and $syt\not\in J$.
From $x,sxt\in J$ it follows that $sx,xt\in J(=J_x)$, and then stability implies
$sx\in L$ and $xt\mr\R x$.
By the dual of Lemma \ref{lem:stab} we have $yt\mr\R y$.
Since~$\R$ is a left congruence, it follows that $syt\mr\R sy$, implying $sy\not\in J$, and hence $sy\not\in L$, as required.

\pfitem{\ref{it:Jsep4}$\implies$\ref{it:Jsep3}} 
Let $x,y\in J$ with $(x,y)\not\in\H$.
Without loss \NRrev{of generality} assume that $(x,y)\not\in\R$.
Being stable,~$J$ is a $\D$-class, so there exists $z\in J$ with $x\mr\L z\mr\R y$.
By assumption, the $\L$-class $L:=L_x=L_z$ is $\H$-separable.  As $(x,z)\not\in\H$ there exists $s\in S$ such that
precisely one of $sx,sz$ is in $L$; it then follows from stability that precisely one of $sx,sz$ is in $J$. Since $z\mr\R y$, it follows from Lemma~\ref{lem:stab}\ref{stab2} that \NRrev{$sz\in J$ if and only if $sy\in J$, completing the proof of this part and of the proposition.}
\epf

\begin{defn}
\JEnew{We say a stable $\J$-class is $\H$-\emph{separable} if it satisfies one, and hence all, of the conditions \ref{it:Jsep1}--\ref{it:Jsep4} of Proposition \ref{pr:Jsep}.}
\end{defn}

\JErev{When the stable $\J$-class is also regular, we again obtain a combinatorial characterisation of $\H$-stability, as follows by combining condition \ref{it:Jsep4} of Proposition \ref{pr:Jsep} with Proposition \ref{prop:sep} and its dual:

\begin{prop}
A stable, regular $\J$-class $J$ is $\H$-separable if and only if the matrix $M(J)$ is both row- and column-faithful.  \epfres
\end{prop}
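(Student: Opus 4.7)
The plan is to invoke three pre-existing results in combination, since the paper already flags this statement as a direct consequence: condition \ref{it:Jsep4} of Proposition \ref{pr:Jsep}, which reduces $\H$-separability of a stable $\J$-class $J$ to $\H$-separability of every $\L$- and $\R$-class contained in $J$; Proposition \ref{prop:sep}, which characterises $\H$-separability of an $\L$-class in a stable regular $\D$-class via row-faithfulness; and the left-right dual of Proposition \ref{prop:sep}, which characterises $\H$-separability of an $\R$-class via column-faithfulness.

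First I would record the preliminary observation that since $J$ is stable, it is in fact a $\D$-class (as noted in the paper just before Lemma \ref{lem:stab}), so that $M(J)$ is well-defined and the two cited propositions apply verbatim with $D=J$. Then for the forward direction, assume $J$ is $\H$-separable. By Proposition \ref{pr:Jsep}\ref{it:Jsep4}, every $\L$-class contained in $J$ is $\H$-separable, so Proposition \ref{prop:sep} forces $J$ to be row-faithful; similarly, every $\R$-class contained in $J$ is $\H$-separable, so the dual of Proposition \ref{prop:sep} forces $J$ to be column-faithful.

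For the converse, suppose $M(J)$ is both row- and column-faithful. Proposition \ref{prop:sep} then guarantees that every $\L$-class contained in $J$ is $\H$-separable (since the characterisation is ``some, hence every''), and dually every $\R$-class contained in $J$ is $\H$-separable. Applying Proposition \ref{pr:Jsep}\ref{it:Jsep4} in the reverse direction, we conclude that $J$ itself is $\H$-separable.

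There is essentially no obstacle here: the work was done in proving Propositions \ref{prop:sep} and \ref{pr:Jsep}\ref{it:Jsep4}, and the present statement is a book-keeping corollary. The only tiny subtlety worth flagging in the write-up is the stability-implies-$\D$-class remark, which ensures that the matrix $M(J)$ and the hypotheses of Proposition \ref{prop:sep} are applicable to $J$ without further comment.
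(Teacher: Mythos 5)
Your proposal is correct and is exactly the paper's argument: the result is presented there as an immediate consequence of combining condition (iv) of Proposition \ref{pr:Jsep} with Proposition \ref{prop:sep} and its dual, which is precisely the book-keeping you carry out. Your added remark that stability makes $J$ a $\D$-class (so that $M(J)$ and Proposition \ref{prop:sep} apply) is a sensible, if minor, point of care that the paper leaves implicit.
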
}



Let us now consider a minimal $\J$-class $J$ in a semigroup $S$, and assume that it is stable.
Notice that, unlike its one-sided counterparts, $J$ is necessarily unique, and is then the (unique) minimal ideal of $S$.
As with minimal $\L$- and $\R$-classes, $J$ is separable if and only if it is a group, which does not cover all our intended applications.
Also analogous to the one-sided case, 
$J=\overline{J}\setminus \{0\}$ is an $(S,S)$-biact, denoted ${}^S\!J^S$, and we have the following:

\begin{prop}[cf.~Proposition \ref{pro:Lmin}]
\label{pro:Jmin}
If $J$ is a stable minimal $\J$-class of a semigroup $S$ then
$\Cong({}^S\overline{J}{}^S)\cong \Cong({}^S\!J{}^S)^\top$, and hence
\[
\Ht(\Cong({}^S\overline{J}{}^S))=\Ht(\Cong({}^S\!J{}^S))+1.  \epfreseq
\]
\end{prop}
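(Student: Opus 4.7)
The plan is to obtain this as a direct application of Lemma \ref{lem:GS} to the biact $A := {}^S\overline{J}{}^S$ with $G := J$, mirroring the proof of Proposition~\ref{pro:Lmin}. Since the basic operations of a biact are the unary translations $\lambda_s$ and $\rho_s$ for $s\in S$, the algebra $A$ has no constants, so Lemma \ref{lem:GS} applies once we verify (i) that $J$ is a $\G$-class of $A$, and (ii) that $J$ is a subalgebra of $A$. With these in place, the isomorphism $\Cong(\overline{G})\cong\Cong(G)^\top$ supplied by Lemma \ref{lem:GS} translates directly into $\Cong({}^S\overline{J}{}^S)\cong\Cong({}^S\!J{}^S)^\top$, from which the height formula follows at once.

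For (ii), I would begin by showing that $J$ is in fact the minimal two-sided ideal of $S$: for any $s\in S^1$ and $x\in J$ we have $sx\leqJ x$, hence $J_{sx}\leq J$; minimality forces $J_{sx}=J$, i.e.~$sx\in J$, and symmetrically $xs\in J$. Consequently, in $A$ the values $s\cdot x$ and $x\cdot s$ are never $0$ when $x\in J$, so $J$ is closed under both actions; this is precisely the statement that ${}^S\!J^S$ is a sub-biact of $A$. For (i), the sub-biact of $A$ generated by any $b\in J$ equals $S^1bS^1$, which the ideal property confines to $J$; conversely, every $a\in J$ satisfies $a\rJ b$ in $S$, giving $a\in S^1bS^1$, so $\la b\ra=J$ for each $b\in J$. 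Hence $J$ is a single $\G$-class of $A$, with $\{0\}$ forming the other one.

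Finally, I would check that the principal factor $\overline{G}$ as constructed in Section~\ref{sect:algebras} coincides on the nose with $A={}^S\overline{J}{}^S$: the closure in (ii) gives $\overline{\lambda}_s(x)=\lambda_s(x)=sx$ for every $x\in J$, and $\overline{\lambda}_s(0)=0$, matching the defined actions on $\overline{J}$; the right translations are analogous. Lemma \ref{lem:GS} then completes the proof. There is no substantial obstacle here; the only small point requiring care is to distinguish the $\G$-relation on the biact $A$ from Green's $\J$-relation on $S$, but once the ideal property is in hand the two agree on $J$ and the argument goes through cleanly.
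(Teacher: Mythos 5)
Your proof is correct and takes essentially the paper's route: Proposition \ref{pro:Jmin} is presented as a direct consequence of Lemma \ref{lem:GS} (cf.\ Proposition \ref{pro:Lmin}), which is exactly what you establish, the only cosmetic difference being that the paper would apply Lemma \ref{lem:GS} with ambient algebra ${}^S\!S^S$ (so that ${}^S\overline{J}{}^S$ is literally the principal factor of the minimal $\G$-class $J$), whereas you take $\overline{J}$ itself as the ambient algebra and then verify that the resulting principal factor coincides with it. Your observation that minimality alone makes $J$ the minimal ideal, hence a sub-biact with $\la b\ra=J$ for every $b\in J$, is the key point in either formulation.
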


\begin{thm}[cf.~Theorem \ref{thm:S3}]
\label{thm:JS3}
Let $S$ be a stable semigroup with finitely many $\J$-classes, $J_1,\dots, J_n$, with $J_1$ minimal.
For each $r=1,\dots,n$, let $\Gamma_r$ be the 
Sch\"{u}tzenberger group  of any $\H$-class of $J_r$. If $J_2,\dots, J_n$ are all $\H$-separable then
\[
\Ht(\Cong(S))= \Ht(\Cong({}^S\!J_1^S)) +\sum_{r=2}^n \Ht(\NSub(\Gamma_r)). \epfreseq
\]
\end{thm}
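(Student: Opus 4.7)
The plan is to mimic the argument used to obtain Theorem \ref{thm:S3}, but now in its two-sided form, by assembling the three pillars of the two-sided development: the global sum formula coming from the unary-algebra reduction, the normal subgroup description of separable $\J$-class principal factors, and the top-adjunction phenomenon for a minimal $\J$-class. Concretely, I would start from Theorem \ref{thm:SJ} to write
\[
\Ht(\Cong(S)) \;=\; \sum_{r=1}^n \Ht(\Cong({}^S\ol J{}^S_r)) \;-\; n,
\]
since $S$ has only finitely many $\J$-classes $J_1,\dots,J_n$. The task then reduces to evaluating each term $\Ht(\Cong({}^S\ol J_r^S))$ separately.

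For the $\J$-classes $J_2,\dots,J_n$, which by hypothesis are $\H$-separable (and automatically stable, as $S$ is stable), I would apply Proposition \ref{pr:Jsep} to obtain
\[
\Ht(\Cong({}^S\ol J_r^S)) \;=\; \Ht(\NSub(\Gamma_r)) + 1 \qquad\text{for } r=2,\dots,n.
\]
For the minimal $\J$-class $J_1$, we cannot use $\H$-separability (since $J_1$ being separable would force it to be a group, which is not assumed), but Proposition \ref{pro:Jmin} applies and yields
\[
\Ht(\Cong({}^S\ol J_1^S)) \;=\; \Ht(\Cong({}^S\!J_1^S)) + 1.
\]

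Substituting these expressions into the sum from Theorem \ref{thm:SJ} gives
\[
\Ht(\Cong(S)) \;=\; \bigl[\Ht(\Cong({}^S\!J_1^S))+1\bigr] + \sum_{r=2}^n \bigl[\Ht(\NSub(\Gamma_r))+1\bigr] - n,
\]
and the $n$ ones on the right cancel the $-n$, leaving precisely the claimed formula. There is no real obstacle at this stage: the heavy lifting has already been done in establishing the two-sided analogues (Theorem \ref{thm:SJ}, Proposition \ref{pr:Jsep} and Proposition \ref{pro:Jmin}). The only point to keep in mind is that stability of $S$ is needed to invoke Propositions \ref{pr:Jsep} and \ref{pro:Jmin}, both of which require the relevant $\J$-class to be stable, and that the separability hypothesis is applied only to $J_2,\dots,J_n$, with the minimal class $J_1$ being handled via the top-adjunction formula instead.
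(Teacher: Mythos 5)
Your proposal is correct and follows exactly the route the paper intends: the theorem is stated with its proof omitted precisely because it follows by combining Theorem \ref{thm:SJ} with Proposition \ref{pr:Jsep} (for the $\H$-separable classes $J_2,\dots,J_n$) and Proposition \ref{pro:Jmin} (for the minimal class $J_1$), with the $n$ additive constants cancelling the $-n$. Your remarks about where stability is needed and why $J_1$ is handled separately are also accurate.
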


As we said in the preamble to Corollary \ref{co:S3}, in all our applications it will be the case that 
either $\H_L=\nabla_L$ or $\H_L$ is the unique maximal  congruence of ${}^S\!L$, where $L$ is a minimal $\L$-class of~$S$.
Analogous comments hold for a minimal $\R$-class. We combine these two conditions to obtain the following result concerning ${}^S\!J^S$ where $J$ is a stable minimal $\J$-class. In the statement we use the notation $\H_J=\H\restr_J$, $\L_J=\L\restr_J$ and $\R_J=\R\restr_J$, which are congruences on ${}^S\!J^S$ by \cite[Lemma~3.12]{EMRT2018}.

\NRrev{
\begin{lemma}
\label{la:diam}
Let $J$ be a stable minimal $\J$-class in a semigroup $S$.
Suppose that for some (equivalently, every) $\L$-class $L\subseteq J$ one of the following two conditions hold:
\begin{itemize}
\item
$\H_L$ is the unique maximal  congruence of \JEnew{${}^S\!L$}; or 
\item
$\H_L=\nabla_L$.
\end{itemize}
Also suppose that, dually, for some (equivalently, every)
$\R$-class $R\subseteq J$ one of the following two conditions hold:
\begin{itemize}
\item
$\H_R$ is the unique maximal  congruence of \JEnew{$R^S$}; or
\item
$\H_R=\nabla_R$.
\end{itemize}
Then
\[
\Cong({}^S\!J^S)=[\Delta_J,\H_J]\cup\{\L_J,\R_J,\nabla_J\}.
\]
\end{lemma}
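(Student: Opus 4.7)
The plan is to classify all congruences $\sigma \in \Cong({}^S\!J^S)$ that are not contained in $\H_J$, showing each such $\sigma$ equals one of $\L_J$, $\R_J$, or $\nabla_J$. Two preliminary facts will carry most of the weight. First, stability together with the minimality of $J$ gives $sa \rL a$ for any $s \in S$ and $a \in J$ (since $sa \leqL a$ and $sa \in J$, so stability applies), so every $\L$-class $L \sub J$ is closed under left multiplication and is a bona fide left $S$-act ${}^S\!L$; dually for $\R$-classes. Second, $J$ is regular: for $x \in J$, stability applied to $x^2 \leqR x$ and $x^2 \in J$ gives $x \rH x^2$, which forces $H_x$ to be a group $\H$-class, so in particular $J$ contains an idempotent in each of its $\L$- and $\R$-classes.

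Now fix such a $\sigma$ and pick $(x, y) \in \sigma$ with $(x, y) \notin \H$. I would split into three cases according to the $\H$-position of $(x, y)$. If $(x, y) \in \L$, then setting $L = L_x$, the restriction $\sigma\restr_L$ is a left congruence on ${}^S\!L$ containing the pair $(x, y) \notin \H_L$; the hypothesis rules out $\H_L = \nabla_L$ and so forces $\H_L$ to be the unique maximal congruence of ${}^S\!L$, whence $\sigma\restr_L = \nabla_L$, i.e.~$L \times L \sub \sigma$. To propagate this to every other $\L$-class $L' \sub J$, pick $a \in L$ and $u \in L'$ with $a \rR u$ (available because $J$ is a single $\D$-class), write $u = as$ via Green's Lemma, and push $L \times L \sub \sigma$ through the bijection $c \mapsto cs$ using right-compatibility of $\sigma$; this yields $L' \times L' \sub \sigma$, hence $\L_J \sub \sigma$. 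The case $(x, y) \in \R$ is dual and gives $\R_J \sub \sigma$. The remaining case is $(x, y) \notin \L \cup \R$: pick any idempotent $e \in J$, and stability then yields $xe, ye \in L_e$ with $xe \rR x$ and $ye \rR y$, so $(xe, ye) \in \sigma$ lies in $\L \setminus \H$ (since $x \not\rR y$); applying the first case to $(xe, ye)$ gives $\L_J \sub \sigma$, and dually, left-multiplying by an idempotent gives $\R_J \sub \sigma$, so $\sigma \supseteq \L_J \vee \R_J = \nabla_J$ (the join being universal because $J$ is a single $\D$-class).

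To finish, one has to rule out $\sigma$ strictly containing $\L_J$ or $\R_J$ without equalling $\nabla_J$. But any witness pair $(x', y') \in \sigma \setminus \L_J$ (respectively $\sigma \setminus \R_J$) then triggers one of the other two cases above for $\sigma$, both of which force $\R_J \sub \sigma$ (respectively $\L_J \sub \sigma$), and the join then gives $\sigma = \nabla_J$. So $\sigma \in \{\L_J, \R_J, \nabla_J\}$ in every case not covered by $[\Delta_J, \H_J]$, which is what is required. I expect the main obstacle to be the third case: there, the given non-$\H$ pair lives in neither a single $\L$- nor $\R$-class and so cannot directly be fed into the one-sided maximality hypothesis, and the multiplication-by-idempotent device (underwritten by the regularity of $J$ established at the outset) is what reduces it to the first two cases.
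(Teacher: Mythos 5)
Your proof is correct and follows essentially the same route as the paper's: a case split on whether the non-$\H$ pair is $\L$-related, $\R$-related, or neither, using the one-sided maximality hypotheses on a single class, Green's Lemma to translate to all classes, and a reduction of the third case to the first two. The only cosmetic differences are that the paper works with the principal congruence $(x,y)^\sharp$ and handles the third case by forming $(x\cdot x, x\cdot y)=(x^2,xy)$ rather than multiplying by an idempotent (so it does not need your regularity digression), and it leaves your final ``no intermediate congruences'' step implicit.
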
}


\begin{proof}
It suffices to prove that for any $x,y\in J$ with $(x,y)\not\in \H$ we have $(x,y)^\sharp\in \{\L_J,\R_J,\nabla_J\}$,
where $(x,y)^\sharp$ denotes the congruence of ${}^S\!J^S$ generated by $(x,y)$.
We distinguish three cases depending on whether or not $x,y$ are $\L$- or $\R$-related.

\vspace{2mm}
\noindent\textit{Case 1: $(x,y)\not\in\L$ and $(x,y)\in\R$.}
We claim that $(x,y)^\sharp=\R_J$, with the direct inclusion clear.
Let $R=R_x=R_y$.
By assumption, the congruence on $R^S$ generated by $(x,y)$ is $\nabla_R$.
\NRrev{The act $R^S$ is a subact of $J^S$ by Proposition \ref{pro:stacts} and stability, and hence $\nabla_R\subseteq (x,y)^\sharp$.}
Translating this by left multiplication yields $\nab_{R'}\sub(x,y)^\sharp$ for every $\R$-class $R'\sub J$, and so $\R_J\subseteq (x,y)^\sharp$, completing the proof of the claim.

\vspace{2mm}
\noindent\textit{Case 2: $(x,y)\in\L$ and $(x,y)\not\in\R$.} This is dual to Case 1, and here we have $(x,y)^\sharp = \L_J$.

\vspace{2mm}
\noindent\textit{Case 3: $(x,y)\not\in\L$ and $(x,y)\not\in\R$.}
By minimality of $J$ we have $xy\in J$, and then stability implies $x\mr\R xy\mr\L y$.
Similarly, $x\mr\H x^2$.
Hence, the pair $(x^2,xy)=(x\cdot x,x\cdot y)$ belongs to $(x,y)^\sharp$ and to $\R$, but not to $\L$.
Case 1 implies $\R_J=(x^2,xy)^\sharp\subseteq (x,y)^\sharp$.
Dually, $\L_J\subseteq (x,y)^\sharp$, and hence $\nabla_J=\L_J\vee \R_J\subseteq (x,y)^\sharp$, completing the proof.
\end{proof}

Figure \ref{fig:SJS} shows the congruence lattice of ${}^S\!J^S$ in the case that $J$ satisfies the conditions of Lemma \ref{la:diam}, but we note that the congruences $\H_J,\L_J,\R_J,\nabla_J$ are not always distinct.
When precisely one of $\H_L=\nabla_L$ or $\H_R=\nabla_R$ holds, the lattice degenerates to ${\Cong({}^S\!J^S)\cong \NSub(\Gamma)^\top}$.
Thus, we obtain:

\begin{cor}[cf.~Corollary \ref{co:S3}]
\label{co:JS3}
Suppose that all the assumptions of Theorem \ref{thm:JS3} are satisfied.
Also suppose that the minimal $\J$-class $J_1$ satisfies the assumptions of Lemma \ref{la:diam},
and fix an $\L$-class $L\subseteq J$ and an $\R$-class $R\subseteq J$.
Then:
\begin{equation}\label{eq:HtSJS}
\Ht(\Cong(S)) = \sum_{r=1}^n \Ht(\NSub(\Gamma_r))+ \begin{cases}
0 &\text{if } \H_L=\nabla_L \text{ and } \H_R=\nabla_R\\
1 &\text{if } \H_L=\nabla_L \text{ and } \H_R\neq\nabla_R\\
1  &\text{if } \H_L\neq\nabla_L \text{ and } \H_R=\nabla_R\\
2 &\text{if } \H_L\neq\nabla_L \text{ and } \H_R\neq\nabla_R.
\end{cases}
\end{equation}
~\\[-10mm]\epfres
\end{cor}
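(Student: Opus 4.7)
The plan is to invoke Theorem~\ref{thm:JS3}, which yields
\[
\Ht(\Cong(S)) = \Ht(\Cong({}^S\!J_1^S)) + \sum_{r=2}^n \Ht(\NSub(\Gamma_r)),
\]
and then compute $\Ht(\Cong({}^S\!J_1^S))$ explicitly. Since $J_1$ satisfies the hypotheses of Lemma~\ref{la:diam}, the congruence lattice has the form
\[
\Cong({}^S\!J_1^S) = [\Delta_{J_1},\H_{J_1}] \cup \{\L_{J_1},\R_{J_1},\nabla_{J_1}\},
\]
so the calculation splits into determining the height of the bottom interval and then a case analysis of how $\L_{J_1},\R_{J_1},\nabla_{J_1}$ extend it.

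For the bottom interval, I would combine Proposition~\ref{pro:Jmin} with Proposition~\ref{pr:JHrest}. The former gives an isomorphism $\Cong({}^S\!J_1^S)^\top\cong \Cong({}^S\overline{J_1}{}^S)$ via $\sigma\mapsto\sigma\cup\{(0,0)\}$, which restricts to an isomorphism $[\Delta_{J_1},\H_{J_1}]\to[\Delta_{\overline{J_1}},\H_{\overline{J_1}}]$, using $\H_{J_1}\cup\{(0,0)\}=\H_{\overline{J_1}}$. The latter interval is isomorphic to $\NSub(\Gamma_1)$ by Proposition~\ref{pr:JHrest} (applicable because $J_1$ is stable), so $[\Delta_{J_1},\H_{J_1}]$ has height $\Ht(\NSub(\Gamma_1))$. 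Combined with Theorem~\ref{thm:JS3}, this already accounts for the full sum $\sum_{r=1}^n\Ht(\NSub(\Gamma_r))$ in the claimed formula, and it remains only to identify the constant $c\in\{0,1,2\}$.

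Next, I would reinterpret the hypotheses on $L$ and $R$ structurally. Since $J_1$ is stable, it is a single $\D$-class, and so every $\L$-class meets every $\R$-class in a non-empty $\H$-class. Hence $\H_L=\nabla_L$ (equivalently, $L$ is itself an $\H$-class) is equivalent to $J_1$ having a single $\R$-class, i.e.~$\R_{J_1}=\nabla_{J_1}$; dually, $\H_R=\nabla_R$ is equivalent to $\L_{J_1}=\nabla_{J_1}$. The case analysis then runs as follows. If both conditions hold, then $\L_{J_1}=\R_{J_1}=\nabla_{J_1}$ and, using $\H_{J_1}=\L_{J_1}\cap\R_{J_1}$, also $\H_{J_1}=\nabla_{J_1}$; nothing is added above $[\Delta_{J_1},\H_{J_1}]$, giving $c=0$. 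If exactly one holds, the relevant one of $\L_{J_1},\R_{J_1}$ equals $\nabla_{J_1}$ while the other equals $\H_{J_1}$, so a single new top element $\nabla_{J_1}\neq\H_{J_1}$ is adjoined, giving $c=1$. If neither holds, the four congruences $\H_{J_1},\L_{J_1},\R_{J_1},\nabla_{J_1}$ are all distinct, with $\L_{J_1}$ and $\R_{J_1}$ incomparable and strictly between $\H_{J_1}$ and $\nabla_{J_1}$, so any longest chain gains two elements above $\H_{J_1}$, giving $c=2$.

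The main obstacle is the last case, which requires proving the strict inequalities and the incomparability of $\L_{J_1}$ and $\R_{J_1}$. Both rest on exhibiting, inside $J_1$, a pair that is $\L$-related but not $\R$-related, and another that is $\R$-related but not $\L$-related. This is possible precisely in Case~4: with multiple $\L$-classes and multiple $\R$-classes, and each $\L$-class meeting each $\R$-class, any $\L$-class contains elements from different $\R$-classes and vice versa. Assembling the four cases with the reduction above yields the summands $0,1,1,2$ appearing in~\eqref{eq:HtSJS}.
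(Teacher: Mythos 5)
Your proposal is correct and follows essentially the same route as the paper, which states the corollary without a formal proof but derives it from exactly the ingredients you use: Theorem \ref{thm:JS3}, the lattice description $\Cong({}^S\!J_1^S)=[\Delta_{J_1},\H_{J_1}]\cup\{\L_{J_1},\R_{J_1},\nabla_{J_1}\}$ from Lemma \ref{la:diam}, the identification $[\Delta_{J_1},\H_{J_1}]\cong\NSub(\Gamma_1)$ via Propositions \ref{pro:Jmin} and \ref{pr:JHrest}, and the observation (made explicitly in the paper just before the corollary) that the lattice degenerates to $\NSub(\Gamma)^\top$ when exactly one of $\H_L=\nabla_L$, $\H_R=\nabla_R$ holds. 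Your translation of the hypotheses into $\R_{J_1}=\nabla_{J_1}$ and $\L_{J_1}=\nabla_{J_1}$, and the resulting four-way case analysis yielding the constants $0,1,1,2$, matches the intended argument.
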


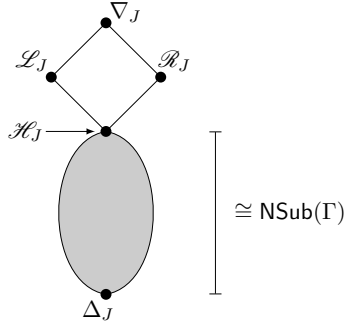
\begin{figure}
\begin{center}
\scalebox{0.8}{
\begin{tikzpicture}[scale=0.9]
\draw[fill=black!20] (0,4) to [bend left=80] (0,7) to [bend left=80] (0,4);
\draw (0,7) -- (-1,8)--(0,9)--(1,8)--(0,7);
\foreach \x/\y in {0/4,0/7,-1/8,1/8,0/9} {\fill (\x,\y)circle(.1); }
\foreach \x/\y/\z in {
0.3/3.7/\De_J,
-.9/8.3/\L_J,
1.7/8.3/\R_J,
0.8/9.2/\nab_J,
-1/7/\H_J
} {\node[left] () at (\x,\y) {$\z$};}
\draw[-{latex}] (-1.1,7)--(-.2,7);
\draw[|-|] (2,7)--(2,4);
\node[right] () at (2.2,5.5) {$\cong\NSub(\Ga)$};
\node[left,white] () at (-2.2,5.5) {$\cong\NSub(\Ga)$};
\end{tikzpicture}
}
\caption{The lattice $\Cong({}^S\!J^S)$ in the situation described in Lemma \ref{la:diam}.}
\label{fig:SJS}
\end{center}
\end{figure}

\section{Applications}\label{sect:applications}

As \NRrev{the culmination} of the paper, we now apply the theoretical results above to give explicit formulae for the heights of the lattices of left, right and two-sided congruences of several `classical' semigroups of transformations, matrices and partitions.
These formulae are recorded in Table~\ref{tab:formulae}, and some computed values are given in Table \ref{tab:small}. 
Following the formulae given in Corollaries~\ref{co:S3},~\ref{co:Sr3} and~\ref{co:JS3}, in order to compute 
$\Ht(\LCong(S))$, $\Ht(\RCong(S))$ and $\Ht(\Cong(S))$ for a given semigroup $S$ we need to do the following:
\begin{itemize}
\item
List the $\D$-classes $D_1,\dots,D_n$ of $S$, and identify the minimal $\D$-class $D_1$.
\item
For each $\D$-class $D_r$, find the number $|D_r/\L|$ of its $\L$-classes and the number $|D_r/\R|$ of its $\R$-classes.
\item
For each $\D$-class $D_r$, identify the Sch\"{u}tzenberger group $\Gamma_r$ of any of its $\H$-classes, and determine the heights of the 
lattices of subgroups and normal subgroups of $\Gamma_r$.
\item
For each non-minimal $D_r$ confirm that it is $\H$-separable. In all our examples this can be done by checking that the matrix $M(D_r)$ is row- and column-faithful.
\item
For the minimal $\D$-class $D_1$, fix an $\L$-class $L\subseteq D_1$ and an $\R$-class $R\subseteq D_1$.
Check if~$L$ is $\H$-separable (which is the case precisely when $|D_1/\R|=1$) or if $\H_{L}$ is the unique maximal  congruence
of ${}^S\!L_1$. This last step, when needed, requires ad-hoc arguments, which, fortunately, are very easy in our examples or can be deduced from known results in the literature. Perform the dual check for $R$.
\end{itemize}
Here is the list of semigroups featuring in Table \ref{tab:formulae} \NRrev{(more details will be given below)}:
\begin{itemize}
\item
the full transformation monoid $\T_n$, \NRrev{consisting of all mappings $[n]\rightarrow [n]$, where $[n]:=\{1,\ldots,n\}$};
\item
the partial transformation monoid $\PT_n$, consisting of all partial mappings $[n]\rightarrow [n]$;
\item
the symmetric inverse monoid $\I_n$, consisting of all partial bijections $[n]\rightarrow [n]$;
\item
the monoid $\O_n$ of all order-preserving mappings $[n]\rightarrow [n]$;
\item
the general linear monoid $\M(n,q)$, consisting of all $n\times n$ matrices over the finite field $\FF_q$;
\item
the partition monoid $\P_n$, \NRrev{consisting of all set partitions on the set \JEnew{$[n]\cup[n]'$, where $[n]' := \{1',\dots,n'\}$ (e.g.\ see \cite{EG2017})};}
\item
the Brauer monoid $\B_n$, consisting of all partitions from $\P_n$ with all blocks of size $2$;
\item
the Temperley--Lieb monoid $\TL_n$, consisting of all planar Brauer partitions;
\item
the dual symmetric inverse monoid $\I_n^*$,  consisting of all block bijections $[n]\to[n]$,
 i.e.~partitions from $\P_n$ \JEnew{in which every block is a \emph{transversal}, i.e.~contains points from both $[n]$ and~$[n]'$}. 
\end{itemize}
The same methodology can be followed to obtain analogous formulae for essentially all natural \NRrev{finite} semigroups of transformations or partitions that are commonly studied in the literature.

\NRrev{Before we look in detail at our selected examples, we make a remark about terminology.
All the semigroups in the above list are known to be regular, which, among other things, means that  every $\L$-class and every $\R$-class contain an idempotent. 
As mentioned in Subsection \ref{subsect:H}, the $\H$-class $H$ of an idempotent is a maximal subgroup, and is isomorphic to the Sch\"{u}tzenberger group of $H$. Thus, in what follows we could talk about maximal subgroups instead of 
Sch\"{u}tzenberger groups, which would be more in line with the literature on these specific semigroups. However, we have opted to retain Sch\"{u}tzenberger groups for the sake of internal consistency of this article.}

\subsection{The full transformation monoid $\T_n$}

The full transformation monoid $\T_n$ consists of all mappings $[n]\rightarrow [n]$ under the composition of mappings.
\NRrev{The mappings in $\T_n$ are written to the right of their arguments, and the composition is from left to right, i.e.~$x(\alpha\beta)=(x\alpha)\beta$.}
\JEnew{The order of $\T_n$} is $n^n$.  In the following discussion we assume that $n\geq2$.  Green's relations
 \JErev{$\D(=\!\!\J)$}, $\L$ and $\R$ are governed by ranks, images and kernels of mappings:
\[
\alpha\mr\D\beta \iff \rank(\alpha)=\rank(\beta),\quad
\alpha\mr\L\beta \iff \im(\alpha)=\im(\beta),\quad
\alpha\mr\R\beta \iff \ker(\alpha)=\ker(\beta).
\]
See for example \cite[Section 2.2]{CPbook}.
In particular, $\T_n$ has $n$ $\D$-classes, denoted $D_1,\dots,D_n$,
where $D_r=\set{\alpha\in \T_n}{ \rank(\alpha)=r}$. The minimal $\D$-class is $D_1$.
The $\L$-classes in $D_r$ are in one-one correspondence with possible images of size $r$, and hence
 $|D_r/\L|=\binom{n}{r}$. Similarly, the $\R$-classes in $D_r$ correspond to possible kernels with $r$ blocks, and so
 $|D_r/\R|=S(n,r)$, the Stirling number of the second kind. 
 The Sch\"{u}tzenberger groups of $\H$-classes in $D_r$ are all isomorphic to the symmetric group $\S_r$.
 Each non-minimal $\D$-class $D_r$ ($2\leq r\leq n$) is $\H$-separable. This can be verified by checking that the matrix $M(D_r)$ is row- and column-faithful; alternatively see~\cite{ER2023}.
 The minimal $\D$-class $D_1$ is also an $\R$-class, and contains $n$ singleton $\L$-classes.

 It now follows that each $\L$-class in $D_1$ is separable, and we obtain a formula for the height of $\LCong(\T_n)$ 
 by applying Corollary \ref{co:S3}\ref{it:S31}: 
 \[
 \Ht(\LCong(\T_n)) = \sum_{r=1}^n \binom nr \Ht(\Sub(\S_r)).
 \]
 However, $R=D_1$, the unique $\R$-class in $D_1$ \NRrev{is not an $\H$-class} (for $n\geq2$). 
 Hence we would like to apply Corollary \ref{co:Sr3}\ref{it:Sr32}, for which we need to check that
 $\H_R$ is the unique maximal  congruence on $R^{\T_n}$.
 To this end, let $\alpha,\beta\in R$ be any two distinct constant mappings, say with $\im(\alpha)=\{x\}$ and $\im(\beta)=\{y\}$, where $x\neq y$.
 Further, let $\gamma,\delta\in R$ be any constant mappings, say with the images $u$ and $v$, respectively.
 Let $\zeta\in \T_n$ be any mapping satisfying 
 $x\zeta =u$, $y\zeta =v$. Then $(\gamma,\delta)=(\alpha\cdot \zeta,\beta\cdot\zeta)\in(\alpha,\beta)^\sharp$. In other words, $(\alpha,\beta)^\sharp=\nabla_R$, proving the claim. So, applying Corollary \ref{co:Sr3}\ref{it:Sr32}, we obtain
 \[
 \Ht(\RCong(\T_n)) = 1+\sum_{r=1}^n S(n,r) \Ht(\Sub(\S_r)).
\]
Finally, for the height of $\Cong(\T_n)$, we apply Corollary \ref{co:JS3}.
From the discussion above, we have~${\H_L=\nabla_L}$ for any $\L$-class $L\sub D_1$, and $\H_R$ is the unique maximal  congruence on~$R^{\T_n}$ (where ${R=D_1}$).  Consequently, the extra additive contribution in \eqref{eq:HtSJS} is $1$ (for $n\geq2$) and we have
$\Ht(\Cong(\T_n))=1+\sum_{r=1}^n\Ht(\NSub(\S_r))$.
Now, it is known that 
\begin{equation} \label{eq:htnsr}
(\Ht(\NSub(\S_r)))_{\JErev{r\geq 1}}= (\JErev{1},2,3,4,3,3,3,\dots),
\end{equation}
and it follows that
\[
\Ht(\Cong(\T_n))=3n-1 \quad \text{for } n\geq 4.
\]
The same result can, of course, be obtained by recalling Malcev's description of congruences on~$\T_n$~\cite{Malcev1952}: they in fact form a chain
of length $3n-1$ (for $n\geq 4$).
By contrast, the lattices of one-sided congruences of $\T_n$ are far from being understood, and have very complicated structure even for small $n$; \JErev{see Figure \ref{fig:LT3} for the lattice $\LCong(\T_3)$, produced using GAP \cite{GAP4,Semigroups}.}

\begin{figure}[t]
\begin{center}
\includegraphics[width=0.9\textwidth]{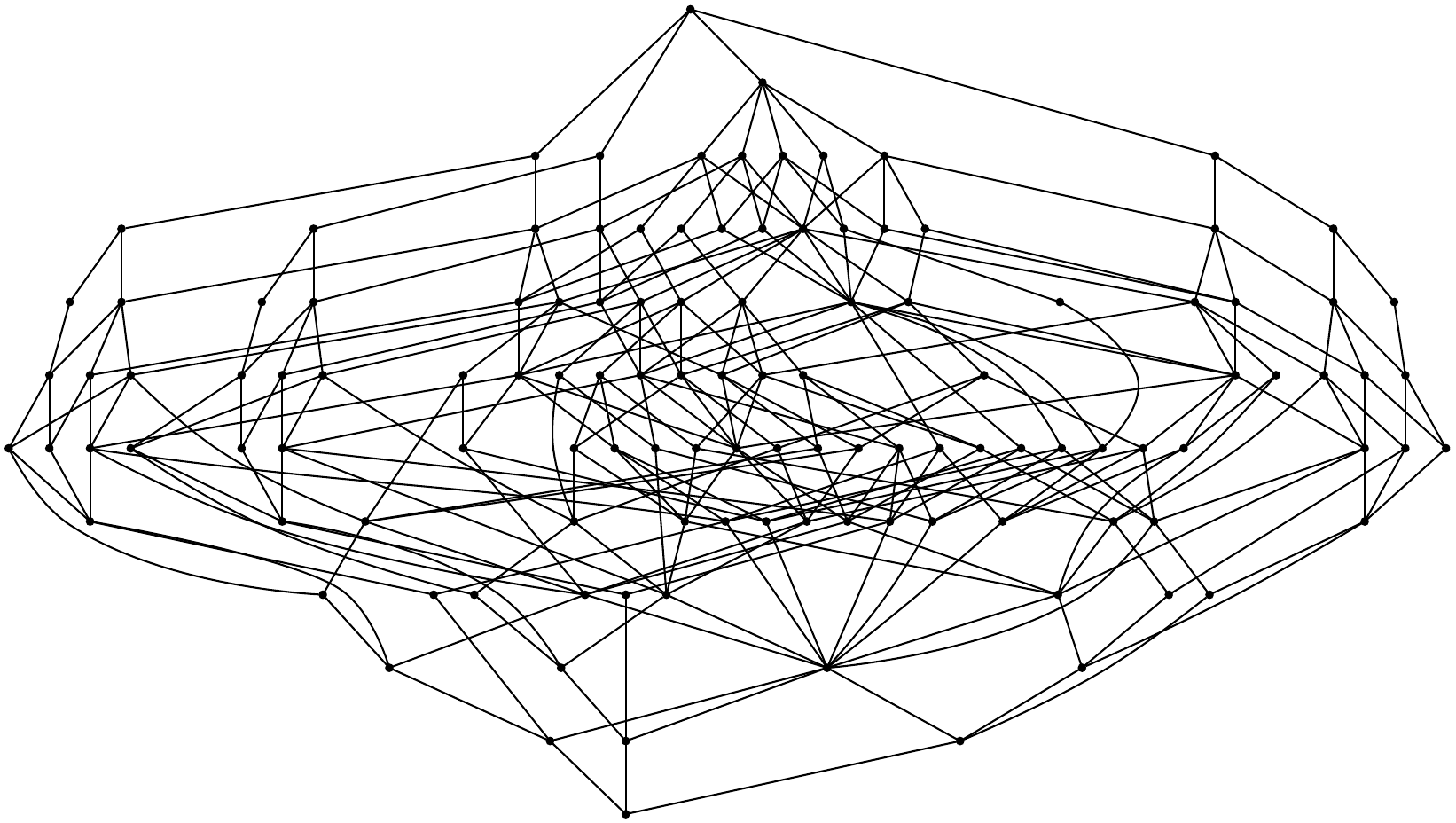}
\caption{The lattice $\LCong(\T_3)$ of left congruences of the full transformation monoid $\T_3$.}
\label{fig:LT3}
\end{center}
\end{figure}


\subsection{The partition monoid $\P_n$}

We now follow the same trajectory for the partition monoid $\P_n$, but somewhat more rapidly, again assuming that $n\geq2$.
For the definition of the product in $\P_n$, see for example \cite{EG2017}, which also includes the following information regarding Green's relations.
The $\D$-classes $D_r$ of $\P_n$ again consist of partitions of a fixed rank, but this time the available ranks are $r=0,1,\dots,n$, with~$D_0$ minimal.
The $\L$- and $\R$-classes in $D_r$ correspond to partitions of $[n]$ with $r$ marked blocks,
\NRrev{where markings are used to indicate which blocks participate in transversals.
Consequently, the number of $\R$- and $\L$-classes in $D_r$ is
$\sum_{k=r}^{n} S(n,k)\binom{k}{r}$.}
The Sch\"{u}tzenberger groups are again $\S_r$.
That each $D_r$ is $\H$-separable for $r=1,\dots,n$  was shown in \cite[Lemma~5.13]{EMRT2018}.

Now consider the minimal $\D$-class $D_0$. Note that it is a $B(n)\times B(n)$ rectangular band, where~$B(
n)$ is the $n$th Bell number.
\JEnew{Fix an $\L$-class $L\subseteq D_0$ and an $\R$-class $R\subseteq D_0$.}
\NRrev{Neither $L$ nor $R$ is an $\H$-class} (for $n\geq2$), so we want to show that $\H_L$ and $\H_R$ are the unique maximal  congruences on
${}^{\P_n}\!L$ and $R^{\P_n}$ respectively.
We will show the latter statement, and the former can be deduced by duality.
So, let $\alpha,\beta,\gamma,\delta\in R$ be arbitrary, with $\alpha\neq\beta$.
We want to show that $(\gamma,\delta)\in(\alpha,\beta)^\sharp$.
We do this by showing that $(\gamma,\theta),(\delta,\theta)\in  (\alpha,\beta)^\sharp$, where $\theta\in R$ is the unique partition with singleton lower blocks, and using transitivity.
Only the first of these needs to be proved, as the second will be analogous.
If~$\gamma=\theta$ there is nothing to prove. Otherwise, $\theta$ has at most $n-1$ lower blocks.
In the proof of~\cite[Proposition 5.8]{EMRT2018} it was shown that, swapping~$\alpha$ and~$\beta$ if necessary, there exists $\zeta\in\P_n$ such that
$\gamma=\gamma\alpha\zeta$ and $\gamma':=\gamma\beta\zeta$ has one more lower block than~$\gamma$.
Since $\alpha,\beta,\gamma$ are all $\R$-related in the present consideration, and since all are idempotents, we have $\gamma\alpha=\alpha$ and $\gamma\beta=\beta$,
and so $(\gamma,\gamma')=(\alpha\cdot\zeta,\beta\cdot\zeta)\in(\alpha,\beta)^\sharp$.
Repeating this argument, and using transitivity, we arrive at $(\gamma,\theta)\in(\alpha,\beta)^\sharp$,  as required.

We can now deploy Corollaries \ref{co:S3}\ref{it:S31} and \ref{co:Sr3}\ref{it:Sr31}
to obtain
\[
\Ht(\LCong(\P_n))=\Ht(\RCong(\P_n))= B(n) + \displaystyle{\sum_{r=0}^n \sum_{k=r}^nS(n,k)\binom kr \Ht(\Sub(\S_r))}.
\]
For the two-sided congruences, we follow the same argument as for $\T_n$, with two small differences: first, we have an extra $\D$-class, namely $D_0$; and secondly, the extra additive contribution in~\eqref{eq:HtSJS} is $2$, instead of $1$. Therefore we obtain:
\[
\Ht(\Cong(\P_n))=3n+1 \qquad\text{for $n\geq4$.}
\]
Again, this could also be obtained using the description of the lattice $\Cong(\P_n)$ from \cite{EMRT2018}.

\subsection{Remarks concerning further semigroups}

The calculations for the remaining semigroups from our list are all entirely analogous to what we have just seen for $\T_n$ and $\P_n$, and we will not go through them in any kind of detail. Instead, we present the required input data, alongside  references to where it can be found in the literature, in Table \ref{tab:data}, and then we present the actual formulae for the heights of the lattices in Table~\ref{tab:formulae}.  To aid the reader in both inspecting the tables, and performing the calculations for additional semigroups not given in the tables, we give below some relevant commentary.
Towards the end of the section we will discuss two further monoids not included in the tables.

\vspace{2mm}\noindent
\textbf{Combinatorial notation.}
Various standard combinatorial sequences and arrays appear in the two tables. Here is a quick reference:
$S(n,r)$, Stirling numbers of the second kind;
$B(n)$, Bell numbers;
$C_n$, Catalan numbers;
$n!!$, double factorials;
$\bigl[\begin{smallmatrix} n\\ r\end{smallmatrix} \bigr]_q$, Gaussian binomial coefficients.

\vspace{2mm}\noindent
\textbf{Indexing of $\D$-classes.}
As we have already seen in the case of $\P_n$, the $\D$-classes in some of our monoids are naturally indexed by some sets of integers that are not necessarily of the form $\{1,\dots,n\}$, as in our theoretical results.
The starkest example of this is perhaps the Brauer monoid $\B_n$ and the Temperley--Lieb monoid $\TL_n$, whose $\D$-classes are indexed by all integers from~$[n]$ of the same parity as $n$. To aid the translation, we have included a column in Table \ref{tab:data} to explicitly state the indexing. We have also added a column to specify the minimal $\D$-class: this is always the $\D$-class with the smallest index, i.e.~$D_0$ or $D_1$.

\vspace{2mm}\noindent
\textbf{Sch\"{u}tzenberger group contributions.}
In all our examples, with the exception of the general linear monoid $\M(n,q)$, the Sch\"{u}tzenberger group $\Gamma_r$ of any $\H$-class in $D_r$ is \JErev{either $\S_r$ or trivial}.
For the symmetric group, its subgroup height is given in Theorem~\ref{thm:HtSn}, and its normal subgroup height in \eqref{eq:htnsr}.
\JErev{In some semigroups we will also encounter the symmetric group $\S_0$, which is trivial and has (normal) subgroup height $1$.}

The case where all Sch\"{u}tzenberger groups are trivial, as for example for $\O_n$ and $\TL_n$, leads to significant simplifications, as follows:

\begin{rem}
\label{re:aper}
Suppose $S$ is an $\H$-trivial semigroup, so that all its Sch\"{u}tzenberger groups are trivial.
Then the expression $\sum_{r=1}^n m_r\Ht(\Sub(\Gamma_r))$ in Corollary \ref{co:S3} simplifies to just $\sum_{r=1}^n m_r=|S/\L|$, the number of $\L$-classes of $S$.
Similarly, the corresponding sum in Corollary \ref{co:Sr3} becomes~$|S/\R|$, the number of $\R$-classes in $S$,
while the sum $\sum_{r=1}^n \Ht(\NSub(\Gamma_r))$ in Corollary~\ref{co:JS3} becomes just $n=|S/\D|$, the number of $\D$-classes.
\end{rem}

The Sch\"{u}tzenberger groups in the $\D$-class $D_r$ of the general linear monoid $\M(n,q)$ are isomorphic to the general linear group $\GL(r,q)$.
To the best of our knowledge, the heights of their subgroup lattices have not been determined, although \cite{Sol1,Sol2,Sol3}
discuss in detail the heights of closely related groups of Lie type. 
The normal subgroups of $\GL(r,q)$ are known: with a small number of exceptions, they are precisely the subgroups of the group of scalar matrices, and the subgroups containing the special linear group; see \cite[Theorem 4.9]{ArtinBook}. It is not clear whether this description is sufficient to give a closed formula for the height of the lattice of normal subgroups of  $\GL(r,q)$; see \cite[Figure 11.1]{ER2023}.
In our formulae, therefore, we keep the terms $\Ht(\Sub(\GL(r,q)))$ and $\Ht(\NSub(\GL(r,q)))$.

\vspace{2mm}\noindent
\textbf{Separability.}
In all our examples, all non-minimal $\D$-classes $D_r$ are $\H$-separable. This can be easily verified by the reader by considering the matrices $M(D_r)$ and checking they are row- and column-faithful.
Alternatively, this has been proved in \cite{ER2023} in the more general context of certain related categories.

There is one general, theoretical situation where $\H$-separability is guaranteed from the outset, namely for inverse semigroups with finitely many idempotents, where each matrix $M(D)$ is always equivalent to the identity matrix up to permutation of rows and columns.
The reason for this is that in an inverse semigroup every $\L$- and every $\R$-class contains precisely one idempotent.
In particular, we have $|D/\L|=|D/\R|=|E(D)|$, the number of idempotents in $D$.
The above is true for \emph{all} $\D$-classes, including the minimal one, which is always necessarily a group.

\begin{cor}\label{co:in}
Let $S$ be an inverse semigroup with finitely many  $\D$-classes $D_1,\dots, D_n$,
and let $\Gamma_1,\dots,\Gamma_n$ be the Sch\"{u}tzenberger groups 
\JErev{(i.e.~maximal subgroups)} associated with their $\H$-classes, respectively. 
\begin{thmenumerate}
\item
If each $D_r$ has only finitely many idempotents then
\[
\Ht(\LCong(S))=\Ht(\RCong(S))=\sum_{r=1}^n |E(D_r)|\Ht(\Sub(\Gamma_r)).
\]
Moreover, this is equal to $|E(S)|$ if $S$ is $\H$-trivial.
\item
If $S$ is stable then
\[
\Ht(\Cong(S))=\sum_{r=1}^n \Ht(\NSub(\Gamma_r)).
\]
Moreover, this is equal to $n=|S/\D|$ if $S$ is $\H$-trivial.  \epfres
\end{thmenumerate}
\end{cor}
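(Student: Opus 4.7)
The plan is to deduce the corollary as a direct specialisation of Corollaries~\ref{co:S3}\ref{it:S31}, \ref{co:Sr3}\ref{it:Sr31} and~\ref{co:JS3}, relying on the three structural facts about inverse semigroups collected in the paragraph immediately preceding the statement: every $\L$- and $\R$-class contains a unique idempotent, so $|D_r/\L|=|D_r/\R|=|E(D_r)|$; each $\D$-class is $\H$-separable, because the corresponding matrix $M(D_r)$ is, up to permutation of rows and columns, the identity and hence both row- and column-faithful; and the minimal $\D$-class $D_1$ is necessarily a group, hence a single $\H$-class.

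For part (i), I would first observe that having finitely many $\D$-classes together with each $D_r$ containing only finitely many idempotents forces $S$ to have finitely many $\L$- and $\R$-classes, which supplies the finiteness hypothesis of Theorems~\ref{thm:S3} and~\ref{thm:Sr3}. Writing $m_r=|E(D_r)|$ for the common value of $|D_r/\L|$ and $|D_r/\R|$, and using that the representative minimal $\L$- and $\R$-classes are $\H$-classes, Corollary~\ref{co:S3}\ref{it:S31} immediately gives
\[
\Ht(\LCong(S))=\sum_{r=1}^n m_r\Ht(\Sub(\Gamma_r))=\sum_{r=1}^n |E(D_r)|\Ht(\Sub(\Gamma_r)),
\]
and Corollary~\ref{co:Sr3}\ref{it:Sr31} produces the same expression for $\Ht(\RCong(S))$. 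The moreover clause follows by specialising to the $\H$-trivial case: each $\Gamma_r$ is trivial, so $\Ht(\Sub(\Gamma_r))=1$, and the sum collapses to $\sum_{r=1}^n|E(D_r)|=|E(S)|$.

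For part (ii), stability is explicitly assumed, so the hypotheses of Corollary~\ref{co:JS3} are in force. Since the minimal $\J$-class $J_1=D_1$ is itself a single $\H$-class, for any $\L$-class $L\subseteq D_1$ and $\R$-class $R\subseteq D_1$ (both forced to equal $D_1$) we have $\H_L=\nabla_L$ and $\H_R=\nabla_R$. This places us in the first branch of the case split in~\eqref{eq:HtSJS}, where the additive constant is $0$, yielding $\Ht(\Cong(S))=\sum_{r=1}^n\Ht(\NSub(\Gamma_r))$. The $\H$-trivial refinement comes out identically: each $\Ht(\NSub(\Gamma_r))=1$, and the sum becomes $n$.

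I do not anticipate a substantive obstacle, since all the hard work has been done in earlier sections and this corollary is essentially a packaging statement. The one point worth a small verification is that the hypotheses of Lemma~\ref{la:diam} feeding Corollary~\ref{co:JS3} really do hold when $D_1$ is a group; but the two dual alternatives in that lemma each offer `$\H_L=\nabla_L$' (respectively `$\H_R=\nabla_R$') as a permitted option, which is automatic once $D_1$ is an $\H$-class.
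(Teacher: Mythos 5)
Your proposal is correct and follows essentially the same route as the paper, which proves this corollary implicitly via the discussion immediately preceding it: uniqueness of idempotents in $\L$- and $\R$-classes gives $|D_r/\L|=|D_r/\R|=|E(D_r)|$ and makes each $M(D_r)$ an identity matrix up to permutation (hence row- and column-faithful), the minimal $\D$-class is a group and therefore a single $\H$-class, and Corollaries \ref{co:S3}\ref{it:S31}, \ref{co:Sr3}\ref{it:Sr31} and \ref{co:JS3} (first branch of \eqref{eq:HtSJS}) then deliver the three formulae. Your treatment of the $\H$-trivial refinements and of the hypotheses of Lemma \ref{la:diam} matches the paper's intent exactly.
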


As a consequence, if $S$ is a finite semilattice then \[\Ht(\LCong(S))=\Ht(\RCong(S))=\Ht(\Cong(S))=|S|.\]

\vspace{2mm}\noindent
\textbf{Relationship to classification results.}
Just as was the case with $\T_n$ and $\P_n$ above, all the two-sided congruences have been described for all the semigroups listed in Table \ref{tab:formulae}. The oldest and most well-known such results are due to Malcev for the full transformation monoid \cite{Malcev1952} and the general linear monoid \cite{Malcev1953}, and Liber for the symmetric inverse monoid \cite{Liber1953}. These results have been gathered under a common methodological framework in \cite{ER2023},
where the reader can also find references to the historical individual proofs.
From these descriptions, it is possible to infer the formulae for the height of the congruence lattice by straightforward inspection, with the possible exception of the general linear monoid, where perhaps the structure of the congruence lattice is not entirely obvious from the description, but see \cite[Section 11.4]{ER2023} for a discussion of this point.

The reason we have opted to present these values in the way we did is two-fold:
1)  The height is computed without the prior of knowledge of what all the congruences are, 
so the present method can be regarded as being conceptually simpler than going via classification theorems. 
2) The computation follows the same lines as that for one-sided congruences, emphasising the methodological proximity of the two.

In contrast with two-sided congruences, very little is known, by way of classification, for one-sided congruences of the semigroups discussed here. One exception is the symmetric inverse monoid $\I_n$, where useful detailed information about its left and right congruences can be obtained by utilising Brookes' inverse kernel approach; see \cite{Brookes2023} for the general theory and \cite{Brookesthesis} for applications to $\I_n$. Combining this with \cite[Theorem 7.4]{CST1989} may offer an alternative route towards calculating $\Ht(\LCong(\I_n))=\Ht(\RCong(\I_n))$.

\vspace{2mm}\noindent
\textbf{Miscellaneous notes on the table entries.}
The formulae for the heights of the lattices of one-sided congruences for $\O_n$ and $\TL_n$ given in Table \ref{tab:formulae}
are not identical to the sums obtained using the information from Table \ref{tab:data}.
The reason is that the latter sums simplify, and proving the equality of the two expressions is an elementary exercise.
Alternatively, we can recall that~$\O_n$ and~$\TL_n$ are $\H$-trivial, and use  Remark \ref{re:aper}.
To do this we need the total numbers of $\L$- and $\R$-classes in the monoids: for $\O_n$ these are $2^{n}-1$ and $2^{n-1}$ respectively, and for $\TL_n$ they are $\binom{n}{\lfloor n/2\rfloor}$ and $\binom{n}{\lfloor n/2\rfloor}$.

The formulae for the height of the congruence lattices of semigroups whose maximal subgroups are the symmetric groups are valid for $n\geq 4$. For smaller values of $n$ the heights deviate slightly from those formulae, due to the initial irregularity of 
the sequence $(\Ht(\NSub(\S_r)))_{r\geq 0}$; see \eqref{eq:htnsr}.
These values can be found in Table \ref{tab:small}, which gives computed values for $n\leq10$. The Semigroups package for GAP \cite{GAP4,Semigroups} is capable of computing all the congruences for some small values of $n$, which provides a verification of the given numbers.
As the family $\M(n,q)$ of linear monoids depends on two parameters, we have not included any numerical data for it in Table \ref{tab:small}.

In the formulae for the heights of the one-sided congruence lattices of $\B_n$ and $\TL_n$ we have made a couple of indexing simplifications in the transition between Table \ref{tab:data} and Table \ref{tab:formulae}.
We have changed the summation parameter from $r$ to $k$, where $r=n-2k$.
We have also used the fact that $2\lfloor \frac{n-1}{2}\rfloor +1$ equals  $n-1$ when $n$ is even and $n$ when $n$ is odd.

\vspace{2mm}\noindent
\textbf{Partial Brauer monoid.}
Another diagram monoid, not listed in Tables \ref{tab:data} and \ref{tab:formulae}, is the \emph{partial Brauer monoid} $\PB_n$; it consists of all partitions in $\P_n$ with block-sizes at most $2$.  One of the consequences of \cite[Theorems 5.4 and 6.1]{EMRT2018} is that the lattices~$\Cong(\P_n)$ and $\Cong(\PB_n)$ are isomorphic, and hence have the same height.  The analogue is not true for one-sided congruence lattices (apart from trivial exceptions for small $n$).  Indeed, $\PB_n$ again has $\D$-classes $D_0,D_1,\ldots,D_n$, given by ranks, and Corollary~\ref{co:S3} applies; see \cite[Section~6]{EMRT2018} for the relevant separation properties.  This gives a formula for $\Ht(\LCong(\PB_n))$ in terms of the parameters 
$|D_r/\L|=\binom{n}{r}a(n-r)$, where $a(k)$ stands for the number of involutions on $[k]$; see 
\cite[Proposition 4.6]{DDE2021}. 
Since these are strictly smaller than the corresponding parameters for~$\P_n$ itself, we have $\Ht(\LCong(\PB_n))<\Ht(\LCong(\P_n))$ for~$n\geq2$.  
\JEnew{The same is true for the right congruence lattices.}

\vspace{2mm}\noindent
\textbf{Monoids of uniform block bijections.}
For all the monoids discussed so far, the posets of $\J(=\D)$-classes happen to be chains.
This is not a requirement for our formulas to apply. We conclude by briefly discussing a monoid with a more complicated $\J$-structure, namely
the monoid $\F_n^*$ of all \emph{uniform} block bijections from $\I_n^*$, i.e.~those for which each block contains the same number of upper and lower points. The $\D$-classes of $\F_n^*$ are of the form $D_\mu$, where $\mu\vdash n$ is an \emph{integer partition} of $n$, i.e.~a tuple ${\mu=(m_1,\ldots,m_k)}$ for some $k\geq1$, where $m_1\geq\cdots\geq m_k\geq1$ are integers, and $m_1+\cdots+m_k=n$. Given such a $\mu\vdash n$, the $\D$-class $D_\mu$ contains all block bijections of rank~$k$, whose blocks contain $m_1,\ldots,m_k$ upper (and lower) points. As explained in \cite[Section~3]{FL1998}, the parameters associated to such a $\D$-class are as follows. First, for $1\leq i\leq n$, we write $\mu_i$ for the number of entries of~$\mu$ equal to $i$ (so $n = \sum_{i=1}^ni\mu_i$). The number of $\L$- and $\R$-classes in~$D_\mu$ is equal to the number of set partitions of $[n]$ whose blocks have sizes $m_1,\ldots,m_k$, which is well known to be equal to 
$n!/\prod_{i=1}^n\mu_i!(i!)^{\mu_i}$.
The Sch\"utzenberger group of any $\H$-class in $D_\mu$ is isomorphic to the \emph{Young subgroup} $\S_\mu:=\S_{\mu_1}\times\cdots\times\S_{\mu_n}$ of $\S_k$. Applying Corollary \ref{co:in} yields
\begin{align*}
\Ht(\LCong(\F_n^*)) = \Ht(\RCong(\F_n^*)) &= \sum_{\mu\vdash n} \frac{n!\Ht(\Sub(\S_\mu))}{\prod_{i=1}^n\mu_i!(i!)^{\mu_i}},\\
 \Ht(\Cong(\F_n^*))  &=\sum_{\mu\vdash n}\Ht(\NSub(\S_\mu))  .
\end{align*}
To the best of the authors' knowledge 
neither the one-sided nor two-sided congruences of $\F_n^*$ have been classified.
\bigskip

\noindent
\NRrev{\textbf{Acknowledgements.}
We are grateful to three anonymous referees for their insightful comments and helpful suggestions.}

\footnotesize
\def\bibspacing{-1.1pt}
\bibliography{biblio}
\bibliographystyle{abbrv}

\begin{sidewaystable}
\[
\begin{array}{|c||c|c|c|c||c|c|c||c|}
\hline
\multirow{2}{20mm}{\textbf{Semigroup}} &\multicolumn{4}{c||}{ \textbf{All \boldmath{$\D$}-classes \boldmath{$D_r$}}} & \multicolumn{3}{c||}{\textbf{Minimal \boldmath{$\D$}-class \boldmath{$D_r$}} } & \multirow{2}{20mm}{\textbf{Reference}}
\\ \cline{2-8}
& \text{\boldmath{$r$}} \textbf{ range} & \text{\boldmath{$|D_r/\L|$}} & \text{\boldmath{$|D_r/\R|$}} &
\text{\boldmath{$\Gamma_r$}} & 
\text{\boldmath{$r$}} & \text{\boldmath{$|D_r/\L|$}} & \text{\boldmath{$|D_r/\R|$}}&
\\ \hline\hline
\rule[-4.5mm]{0mm}{11mm}\T_n & 1\leq r\leq n & \displaystyle{\binom{n}{r}} &  S(n,r) & \S_r & 
 r=1 & n & 1 & \text{\cite[Section 4.6]{GMbook}} \\ \hline
\rule[-4.5mm]{0mm}{11mm} \PT_n &0\leq r\leq n&\displaystyle{\binom{n}{r}}  & S(n+1,r+1) &\S_r&r=0&1&1&\text{\cite[Section 4.6]{GMbook}}\\ \hline
\rule[-4.5mm]{0mm}{11mm} \I_n &0\leq r\leq n&\displaystyle{\binom{n}{r}} &\displaystyle{\binom{n}{r}} &\S_r&r=0&1&1&\text{\cite[Section 4.6]{GMbook}} \\ \hline
 \rule[-4.5mm]{0mm}{11mm} \O_n &1\leq r\leq n&\displaystyle{\binom{n}{r}} &\displaystyle{\binom{n-1}{r-1}} &1&r=1&n&1&\text{\cite[Section 2]{Garb1994}}\\ \hline
\rule[-5.5mm]{0mm}{1mm} \M(n,q) &0\leq r\leq n&{}_{\phantom{q}}\biggl[\begin{matrix} n\\ r\end{matrix}\biggr]_q&
{}_{\phantom{q}}\biggl[\begin{matrix} n\\ r\end{matrix} \biggr]_q&\GL(r,q)&r=0&1&1&\text{\cite[Section 3]{DE2018}}\\ \hline
\rule[-5.5mm]{0mm}{11mm}\P_n & 0\leq r\leq n & \displaystyle{\sum_{k=r}^nS(n,k)\binom kr} &\displaystyle{\sum_{k=r}^nS(n,k)\binom kr}&\S_r&r=0&B(n)&B(n)&  \text{\cite[Section 7]{EG2017}}  \\ \hline
\B_n & \begin{array}{c} 0\leq r\leq n \\ r\equiv n\!\!\pmod{2}\end{array} &\displaystyle{\binom{n}{r}(n-r-1)!!}&\displaystyle{\binom{n}{r}(n-r-1)!!}&\S_r&\begin{array}{l} r=0\\ r=1\end{array}&
\begin{array}{c} (n-1)!! \\ n!! \end{array} &\begin{array}{c} (n-1)!! \\ n!! \end{array}& \text{\cite[Section 8]{EG2017}} \\ \hline
\TL_n &\begin{array}{c} 0\leq r\leq n \\ r\equiv n\!\!\pmod{2}\end{array}&\displaystyle{\frac{r+1}{n+1}\binom{n+1}{\frac{n-r}{2}}}&\displaystyle{\frac{r+1}{n+1}\binom{n+1}{\frac{n-r}{2}}}&1&r=0\text{ or } 1&C_{\lceil\frac{n}{2}\rceil}&C_{\lceil\frac{n}{2}\rceil}&\text{\cite[Section 9]{EG2017}} \\ \hline
\I_n^* &1\leq r\leq n&S(n,r)&S(n,r) &\S_r &r=1&1&1&\text{\cite[Section 2]{FL1998}} \\ \hline
\end{array}
\]
\caption{The data concerning Green's structure for some semigroups of transformations and partitions needed to compute the heights of their lattices of left, right and two-sided congruences.}
\label{tab:data}
\end{sidewaystable}

\begin{sidewaystable}
\[
\begin{array}{|c||c|c|c|}
\hline
\textbf{\boldmath{$S$}} & \text{\boldmath{$\Ht(\LCong(S))$}} &\text{\boldmath{$\Ht(\RCong(S))$}} &
\text{\boldmath{$\Ht(\Cong(S))$}} 
\\ \hline\hline
\rule[-5.5mm]{0mm}{12.5mm}\T_n & \displaystyle{\sum_{r=1}^n \binom nr \Ht(\Sub(\S_r))}  & \displaystyle{1+\sum_{r=1}^n S(n,r) \Ht(\Sub(\S_r)) }& 3n-1\\ \hline
\rule[-5.5mm]{0mm}{12.5mm}\PT_n &\displaystyle{\sum_{r=0}^n \binom nr \Ht(\Sub(\S_r))} &\displaystyle{\sum_{r=0}^n S(n+1,r+1) \Ht(\Sub(\S_r)) }&3n-1\\ \hline 
\rule[-5.5mm]{0mm}{12.5mm}\I_n &\displaystyle{\sum_{r=0}^n \binom nr \Ht(\Sub(\S_r))}&\displaystyle{\sum_{r=0}^n \binom nr \Ht(\Sub(\S_r))}&3n-1\\ \hline
\O_n &2^n-1&2^{n-1}+1&n+1 \\ \hline
\rule[-5.5mm]{0mm}{12.5mm}\M(n,q) &\displaystyle{\sum_{r=0}^n {} \biggl[\begin{matrix} n\\ r\end{matrix}\biggr]_q \Ht(\Sub(\GL(r,q)))}&\displaystyle{\sum_{r=0}^n {} \biggl[\begin{matrix} n\\ r\end{matrix}\biggr]_q \Ht(\Sub(\GL(r,q)))}&
\displaystyle{\sum_{r=0}^n {} \Ht(\NSub(\GL(r,q)))}\\ \hline
\rule[-5.5mm]{0mm}{12.5mm}\P_n & B(n) + \displaystyle{\sum_{r=0}^n \sum_{k=r}^nS(n,k)\binom kr \Ht(\Sub(\S_r))}  & B(n) + \displaystyle{\sum_{r=0}^n \sum_{k=r}^nS(n,k)\binom kr \Ht(\Sub(\S_r))}  &3n+1  
\\ \hline
\rule[-5.5mm]{0mm}{13.5mm}\B_n &(2\big\lfloor\frac{n-1}{2}\big\rfloor +1)!! + \displaystyle{\sum_{k=0}^{\lfloor n/2 \rfloor}} \binom{n}{2k}(2k-1)!!\Ht(\Sub(\S_{n-2k})) &
(2\big\lfloor\frac{n-1}{2}\big\rfloor +1)!! +\displaystyle{\sum_{k=0}^{\lfloor n/2 \rfloor}} \binom{n}{2k}(2k-1)!!\Ht(\Sub(\S_{n-2k}))&
\displaystyle{3\Big\lfloor \frac{n}{2}\Big\rfloor+3}
\\ \hline
\rule[-5mm]{0mm}{11.5mm}\TL_n & C_{\lceil \frac{n}{2}\rceil} + \displaystyle{\binom{n}{\lfloor \frac{n}{2}\rfloor }} &C_{\lceil \frac{n}{2}\rceil} + \displaystyle{\binom{n}{\lfloor \frac{n}{2}\rfloor }} &  \displaystyle{\Big\lfloor \frac{n}{2}\Big\rfloor}+3\\ \hline
\rule[-5.5mm]{0mm}{12.5mm}\I_n^* &\displaystyle{\sum_{r=1}^n S(n,r) \Ht(\Sub(\S_r))}&\displaystyle{\sum_{r=1}^n S(n,r) \Ht(\Sub(\S_r))}&3n-2\\ \hline
\end{array}
\]

\caption{The formulae for the heights of the left, right and two-sided congruence lattices for some semigroups of transformations and partitions. Throughout we take $n\geq 4$, but some formulae remain valid for smaller $n$.
The actual values for small $n$ are given in Table \ref{tab:small}.}
\label{tab:formulae}

\end{sidewaystable}

\begin{table}
\[
\begin{array}{|c|c||c|c|c|c|c|c|c|c|c|c|c|}\hline
\textbf{Semigroup} & \textbf{Lattice} & \textbf{0}& \textbf{1}& \textbf{2}& \textbf{3}& \textbf{4}& \textbf{5}& \textbf{6}& \textbf{7}& \textbf{8}& \textbf{9}& \textbf{10}
\\ \hline\hline
\multirow{3}{*}{$\T_n$} & \LCong & 1 & 1 & 4 & 12 & 33 & 86 & 214 & 512 & 1189 & 2706 & 6080
\\ \cline{2-13}
&\RCong &1 & 1 & 4 & 11 & 39 & 163 & 756 & 3776 & 20056 & 112769 & 670321
\\ \cline{2-13}
&\Cong & 1 & 1 & 4 & 7 & 11 & 14 & 17 & 20 & 23 & 26 & 29
\\ \hline
\multirow{3}{*}{$\PT_n$} & \LCong & 1 & 2 & 5 & 13 & 34 & 87 & 215 & 513 & 1190 & 2707 & 6081
\\ \cline{2-13}
&\RCong &1 & 2 & 6 & 23 & 101 & 488 & 2549 & 14213 & 83780 & 518741 & 3362175
\\ \cline{2-13}
&\Cong & 1 & 2 & 4 & 7 & 11 & 14 & 17 & 20 & 23 & 26 & 29
\\ \hline
\multirow{3}{*}{$\I_n$} & \LCong & 1 & 2 & 5 & 13 & 34 & 87 & 215 & 513 & 1190 & 2707 & 6081
\\ \cline{2-13}
&\RCong &1 & 2 & 5 & 13 & 34 & 87 & 215 & 513 & 1190 & 2707 & 6081
\\ \cline{2-13}
&\Cong & 1 & 2 & 4 & 7 & 11 & 14 & 17 & 20 & 23 & 26 & 29
\\ \hline
\multirow{3}{*}{$\O_n$} & \LCong &1 & 1 & 3 & 7 & 15 & 31 & 63 & 127 & 255 & 511 & 1023 
\\ \cline{2-13}
&\RCong &1 & 1 & 3 & 5 & 9 & 17 & 33 & 65 & 129 & 257 & 513 
\\ \cline{2-13}
&\Cong &1 & 1 & 3 & 4 & 5 & 6 & 7 & 8 & 9 & 10 & 11
\\ \hline
\multirow{3}{*}{$\P_n$} & \LCong & 1 & 2 & 9 & 35 & 164 & 881 & 5260 & 34215 & 239263 & 1782109 & 14043708
\\ \cline{2-13}
&\RCong &1 & 2 & 9 & 35 & 164 & 881 & 5260 & 34215 & 239263 & 1782109 & 14043708
\\ \cline{2-13}
&\Cong & 1 & 2 & 6 & 9 & 13 & 16 & 19 & 22 & 25 & 28 & 31
\\ \hline
\multirow{3}{*}{$\B_n$} & \LCong & 1 & 1 & 3 & 9 & 23 & 66 & 202 & 659 & 2307 & 8238 & 32008
\\ \cline{2-13}
&\RCong &1 & 1 & 3 & 9 & 23 & 66 & 202 & 659 & 2307 & 8238 & 32008
\\ \cline{2-13}
&\Cong & 1 & 1 & 3 & 6 & 9 & 9 & 12 & 12 & 15 & 15 & 18
\\ \hline
\multirow{3}{*}{$\TL_n$} & \LCong & 1&1& 2& 5&8& 15& 25& 49& 84& 168& 294
\\ \cline{2-13}
&\RCong &1&1& 2& 5&8& 15& 25& 49& 84& 168& 294
\\ \cline{2-13}
&\Cong & 1&1& 2& 4& 5& 5& 6& 6& 7& 7& 8
\\ \hline
\multirow{3}{*}{$\I_n^*$} & \LCong & 1 & 1 & 3 & 10 & 38 & 162 & 755 & 3775 & 20055 & 112768 & 670320
\\ \cline{2-13}
&\RCong &1 & 1 & 3 & 10 & 38 & 162 & 755 & 3775 & 20055 & 112768 & 670320
\\ \cline{2-13}
&\Cong & 1 & 1 & 3 & 6 & 10 & 13 & 16 & 19 & 22 & 25 & 28
\\ \hline
\end{array}
\]

\caption{Heights of lattices of left, right and two-sided congruence lattices for the monoids of transformations and partitions from Table \ref{tab:formulae}. }

\label{tab:small}

\end{table}

\

\end{document}